\begin{document}
\title{On the Deformation Theory of $\bb{E}_{\infty}$-Coalgebras}
\author{Florian Riedel}
\begin{abstract}
  We introduce a notion of formally \'etale $\bb{E}_{\infty}$-coalgebras and show that they
  admit essentially unique, functorial lifts along square zero extensions of $\bb{E}_{\infty}$-rings.
  Using this, we show that for a perfect $\F_p$-algebra $k$, Weil restriction along the augmentation
  $\bb{W}(k)\to k$ induces a fully faithful functor from formally \'etale, connective
  $\bb{E}_{\infty}$-coalgebras in $k$-modules to connective $\bb{E}_{\infty}$-coalgebras
  in $p$-complete modules over the spherical Witt vectors $\bb{W}(k)$. Finally, we prove that
  for any connected space $X$, the $k$-homology $k[X]$ is a formally \'etale $\bb{E}_\infty$-coalgebra
  in $k$-modules.
  This shows that $\bb{W}(k)[X]^{\wedge}_p$ can be recovered as the essentially
  unique lift of $k[X]$ to a connective coalgebra in $p$-complete $\bb{W}(k)$-modules.
\end{abstract}

\date{\today}
\maketitle
\setcounter{tocdepth}{2}
\tableofcontents
\section{Introduction}
In this paper, we systematically study deformation problems of $\bb{E}_\infty$-coalgebras. 
The key insight is that there is a well behaved class of coalgebras for which such deformation
problems are uniquely solvable, which we call \textit{formally \'etale} coalgebras. 
Moreover, we show that there is a Witt vector style functor which is defined for all coalgebras
which produces unique lifts when restricted to formally \'etale coalgebras.

\addtocontents{toc}{\protect\setcounter{tocdepth}{1}}
\subsection{Overview}

Let $X$ be a space and throughout fix a prime $p$. We want to compare two different invariants
associated to $X$, namely the  $\F_p$-homology $\F_p[X]$ and the $p$-completed suspension
spectrum $\Sigma^\infty_+X^\wedge_p = \S[X]^\wedge_p$. The $\F_p$-chains can be recovered from
the spherical chains via the base change
\[ \S[X]^\wedge_p \otimes \F_p \simeq \F_p[X].\]
However, on the level of spectra there is in general no canonical way to go back.
For example, since any module over a field is free, we have that 
$\F_2[\bb{R}\rm{P}^2] \simeq \F_2 \oplus \Sigma^2\F_2$, but the suspension spectrum
$\S[\bb{R}\rm{P}^2]^\wedge_2 \simeq (\Sigma\S/2)^\wedge_2$ is \textit{not} free as module over 
$\S^\wedge_2$. Thus, we have two different lifts of $\F_2[\bb{R}\rm{P}^2]$ to $\S_2^\wedge$,
namely $\S[\bb{R}\rm{P}^2]^\wedge_2$s and $\S_2^\wedge \oplus \Sigma^2\S_2^\wedge$. 
We can rigidify this situation by considering additional structure on the homology of $X$.
For any $\bb{E}_\infty$-ring $R$, the assignment $X \mapsto R[X]$ canonically refines to a functor 
$\cl{S} \to \rm{cCAlg}(\rm{Mod}_R)$ where the right hand side is the category of
$\bb{E}_\infty$-coalgebras in $\rm{Mod}_R$ and the coalgebra structure on $R[X]$ is induced
by the diagonal map $X \to X \times X$. Our main result is that, if $X$ is connected, we can recover
the coalgebra $\S[X]^\wedge_p$ from the coalgebra $\F_p[X]$ in a functorial way.
To state this in the correct generality, let us fix some notation.
Let $k$ be a perfect $\F_p$-algebra and denote by $\bb{W}(k)$ the spherical
Witt vectors of $k$ as in~\cite{ellII}. For an $\bb{E}_\infty$-ring $R$ denote by $\rm{Mod}_R^\wedge$
the category of $p$-complete $R$-modules equipped with the symmetric monoidal structure given by 
the $p$-completed tensor product. 

\newcounter{tmp}
\begingroup
\setcounter{tmp}{\value{theorem}}
\setcounter{theorem}{0} 
\renewcommand\thetheorem{\Alph{theorem}}

\begin{theorem}\label{result}
  Let $X\in \cl{S}$ be either connected or compact. The space of lifts of
  $k[X] \in \cCAlg(\Mod_k)$ to a coalgebra in $p$-complete, connective $\bb{W}(k)$-modules 
  is contractible and the unique point is given by $\bb{W}(k)[X]^\wedge_p$. 
  Moreover, for any $A \in \rm{cCAlg}(\Mod_{\bb{W}(k)}^{\wedge,\rm{cn}})$
  the base change along the map $\bb{W}(k)\to k$ induces an equivalence
  \[\rm{Map}_{\rm{cCAlg}(\rm{Mod}_{\bb{W}(k)}^{\wedge})}(A, \bb{W}(k)[X]^\wedge_p)
    \rar{\sim} \rm{Map}_{\rm{cCAlg}(\rm{Mod}_{k})}(A\otimes k, k[X]). \]
\end{theorem}

Since $\bb{W}(k)$ is a connective $\bb{E}_\infty$-ring, and thus a limit of square zero extensions of
$\pi_0\bb{W}(k)$, which is in turn a limit of square zero extensions of $\pi_{0}\bb{W}(k)/p\simeq k$,
we approach Theorem~\ref{result} by first understanding how to lift coalgebras along square zero
extensions of connective $\bb{E}_\infty$-rings. The deformation theory of $\bb{E}_\infty$-coalgebras
has thus far mainly been investigated by Lurie in~\cite{ellII} by embedding the category of 
\textit{flat} coalgebras fully faithfully into the category of sheaves on $\CAlg(\Sp)^{\cn}$. 
This provides no insight towards approaching Theorem~\ref{result},
as the $k$-homology of a space is never flat over $k$, unless the space is discrete. 
Our approach is instead the following: Suppose we have any map of connective $\bb{E}_\infty$-rings
$q: R\p \to R$. Colimits of coalgebras are computed underlying and the base change $q^\pt$
commutes with colimits and preserves connective modules. Moreover, the category $\cCAlg(\cl{C})$ 
is presentable whenever $\cl{C}$ is a presentably symmetric monoidal category. 
Thus, the adjoint functor theorem provides us with an adjunction

\begin{equation*}\label{adj}
\begin{tikzcd}
	{q^\ast:\rm{cCAlg}(\rm{Mod}_{R\p}^{\rm{cn}})} & {\mathrm{cCAlg}(\mathrm{Mod}_R^{\mathrm{cn}}):q_\ast}
	\arrow[""{name=0, anchor=center, inner sep=0}, shift left=2, from=1-1, to=1-2]
	\arrow[""{name=1, anchor=center, inner sep=0}, shift left=2, from=1-2, to=1-1]
	\arrow["\dashv"{anchor=center, rotate=-90}, draw=none, from=0, to=1].
\end{tikzcd}
\end{equation*}

We call $q_\pt$ the (connective) \textit{Weil restriction} along the map $q:R\p \to R$
\footnote{Such an adjunction exists without the connectivity assumptions, which we call
\textit{non-connective} Weil restriction. See Example~\ref{adjoint} for a more in-depth discussion.}.
Note that, since restriction of scalars is only \textit{lax} monoidal, an $R$-coalgebra $A$ 
has no underlying $R\p$-coalgebra, and so the "restriction" of $A$ along $q$ via the right 
adjoint $q_\pt$ is something of interest. These right adjoints are hard to describe in
general, but turn out to be essential for the deformation theory of coalgebras.
More precisely, in Definition~\ref{etale} we describe a full subcategory of
$\rm{cCAlg}(\Mod_R)^\rm{cn}$ on which the unit map $q^\pt q_\pt \to \id$ is an
equivalence for any square zero extension $R\p \to R$.
We call these coalgebras \textit{formally \'etale} and denote the subcategory 
with the superscript $(\blank)^{\rm{f\acute{e}t}}$. We show that Weil restriction 
of formally \'etale coalgebras along $\bb{W}(k)\to k$ behaves like a coalgebraic version 
of the spherical Witt vector construction.

\begin{theorem}\label{mainthm}
    Denote by $q:\bb{W}(k)\to k$ the natural augmentation. 
    Weil restriction along $q$ induces a fully faithful functor
    \[\cl{W}:\rm{cCAlg}(\Mod_{k}^{\rm{cn}})^{\rm{\fet}}
    \to \rm{cCAlg}(\Mod_{\bb{W}(k)}^{\cn, \wedge})^{\fet}.\] 
    Moreover, $\cl{W}(A)$ is, up to contractible choice, the unique coalgebra in connective
    $p$-complete $\bb{W}(k)$-modules with an equivalence $\cl{W}(A)\otimes k \simeq A$.
\end{theorem}

This is proven inductively, by first lifting along the $p$-completion tower from $k$ to 
$\pi_0\W(k)$ and then along the Postnikov tower to $\W(k)$.
Theorem~\ref{result} is immediate from Theorem~\ref{mainthm} once we know that
the $k$-homology of a space is formally \'etale. Let us make precise the notion of formally 
\'etale coalgebra before we proceed.

\endgroup

\setcounter{theorem}{\thetmp} 

\begin{construction}
Let $R$ be a connective $\E_\infty$-ring, $M$ a connective $R$-module and denote by 
$e:R\to R\oplus M$ the 0-section of the split square zero extension. For any 
$A\in \cCAlg(\Mod_R^{\cn})$ we define the \textit{universal $M$-deformation coalgebra} of $A$
as the Weil restriction
\[ \Omega^\infty_A M := e_\pt e^\pt A \in \cCAlg(\Mod_R^{\cn}).\]
From the adjunction we get a natural unit map $\eps: A \to \Omega^\infty_A M$. Moreover, the map
$R\oplus M \to R$ induces a section $\pi: \Omega^\infty_A M \to A$.
\end{construction}

We show that deformation problems involving $A$ are equivalent to giving lifts of the form
\[\begin{tikzcd}
      & {\Omega^\infty_A M} \\
    B & A
    \arrow[from=2-1, to=2-2]
    \arrow["{\pi}", from=1-2, to=2-2]
    \arrow[dashed, from=2-1, to=1-2]
\end{tikzcd}\]
  If we ask that these be uniquely solvable for all $M$ and $B$, the Yoneda lemma leads us to write
  down our main definition.
  
\begin{definition}\label{etale}
A coalgebra $A\in \rm{cCAlg}(\Mod_R^{\cn})$ is called \textit{formally \'etale} if the unit map
$\varepsilon:A \to \Omega^{\infty}_{A}M$ is an equivalence for all $M \in \Mod_R^{\cn}$.
We denote by $\rm{cCAlg}(\Mod_R^{\cn})^{\fet}$ the full subcategory 
spanned by the formally \'etale coalgebras.
\end{definition}

Note that for an algebra $S\in \CAlg(\Mod_R^{\cn})$, trying to impose the analogous
criterion would be asking that the inclusion $S \to S \oplus S\otimes_R M$ is an equivalence,
which is only true if $S=0$. Hence, this may not seem like a reasonable definition at first glance.
To reassure the reader that we are not only talking about the trivial coalgebra, we prove the
following general claim.

\begin{proposition}
    Let $A \in \cCAlg(\Mod_R^{\cn})$ be dualizable such that for the $R$-linear dual 
    $A^\vee\in \CAlg(\Mod_R)$ the relative cotangent complex $L_{A^\vee/R}$ vanishes. 
    Then $A$ is formally \'etale.
\end{proposition}

This allows us to prove Theorem~\ref{result} for compact spaces. To access arbitrary connected
spaces, the crucial step is to show that the homology of Eilenberg-MacLane spaces is formally \'etale.
To this end, we utilize recent work of Bachmann\textendash Burklund. In~\cite{bb} they show that for any field
$k$ of characteristic $p$ and any $V\in \Mod_{\F_p}^{\heartsuit}$, we have for $n\geq 0$ pullback
squares in $\cCAlg(\Mod_k)$ of the form

   \[\begin{tikzcd}
	{k[\Omega^\infty\Sigma^n V]} & {C_k(\Sigma^n k \otimes V)} \\
	k & {C_k(\Sigma^n k \otimes V)}
	\arrow[from=1-1, to=1-2]
	\arrow["{F-1}", from=1-2, to=2-2]
	\arrow[from=1-1, to=2-1]
	\arrow[', from=2-1, to=2-2].
\end{tikzcd}\] 

Here $C_k$ denote the cofree coalgebra and $F-1$ is the Artin \textendash Schreier map.
This is the dual picture to the pushout square on cohomology of Mandell in~\cite{mandell}, which
was utilized by Lurie in~\cite{dag8} to prove that the $\F_p$-cohomology of an arbitrary space 
is formally \'etale. The pullback diagram tells us that homology of Eilenberg \textendash MacLane spaces 
is cofree up to one "co-relation". Since Weil restriction preserves cofree coalgebras,
we show that the co-relation $F=1$ forces the coalgebra $k[\Omega^\infty \Sigma^n V]$ to
be formally \'etale, which allows us to prove our third theorem.

\begingroup
\setcounter{tmp}{\value{theorem}}
\setcounter{theorem}{2} 
\renewcommand\thetheorem{\Alph{theorem}}

\begin{theorem}\label{etresult}
Let $X \in \cl{S}$ be either connected or compact. Then for any $\F_p$-algebra $R$, the $R$-homology 
$R[X] \in \cCAlg(\Mod_R^{\cn})$ is formally \'etale.
\end{theorem}

\endgroup
\setcounter{theorem}{\thetmp} 
We strongly believe, but are unable to prove at this time, that the connectivity assumption
can be dropped for arbitrary spaces, not just compact ones. 
Recall that a space is called \textit{$p$-complete} if it is local with respect to the functor
$\F_p[\blank]:\cl{S}\to \Mod_{\F_p}$ and denote by $\cl{S}_p\subseteq \cl{S}$ the full 
subcategory spanned by the $p$-complete spaces. Moreover, call a space \textit{nilpotent}
if is connected and $\pi_1$ is a nilpotent group which acts nilpotently on the higher homotopy
groups. Write $\cl{S}_p^{\rm{nilp}}$ for the category of $p$-complete nilpotent spaces.
Combining Theorem~\ref{result} with~\cite[][Theorem 1.2]{bb}
we also deduce the following spherical lift of Mandells theorem.

\begin{corollary}\label{emb}
 Let $k$ be a perfect, separably closed field of characteristic $p$ with spherical
 Witt vectors $\W(k)$. The $p$-complete $\W(k)$-homology functor
 \[ \cl{S}_p^{\rm{nilp}} \to \rm{cCAlg}(\Mod^{\wedge}_{\bb{W}(k)})
 \quad X \mapsto \bb{W}(k)[X]^\wedge_p\]
 is fully faithful. In particular, for any nilpotent space $X$ we have a natural equivalence
 \[ X^\wedge_p\simeq \rm{Map}_{\rm{cCAlg}_{\bb{W}(k)}^\wedge}(\bb{W}(k), \bb{W}(k)[X]^\wedge_p).\]
\end{corollary}

It would be pleasant if we could replace $\W(k)$ by $\S_p^\wedge$ in Corollary~\ref{emb}.
On the nose this not possible as we need to keep track of descent data. In upcoming work, 
we plan to identify this descent datum with the \textit{coalgebra Frobenius}, a dual notion to the
Tate Frobenius of~\cite{tch} and construct and embedding of $\cl{S}^{\rm{nilp}}_p$ into coalgebras
$\cCAlg(\Sp^\wedge_p)$ equipped with a trivialization of the Frobenius action. This would dualize
work of Yuan in~\cite{yuan} and allow us to drop more finiteness assumptions on the spaces we input.

\subsection*{Outline}

We proceed along the following structure:
In Section 1 we recall the definition and basic properties of $\bb{E}_{\infty}$-coalgebras
and discuss some relevant coalgebraic right adjoints.
In Section 2 we review the setup of deformation theory developed by Lurie
in~\cite{dag14} and~\cite{ha}. We recall the notions of square zero extensions of
$\bb{E}_{\infty}$-rings and discuss cohesive and nilcomplete functors.
We then define the tangent complex of a cohesive functor and give proofs of some important facts
about its behavior.
In Section 3 we prove that the moduli of coalgebras are cohesive and thus we can analyze them using
the machinery discussed in the previous section. We then introduce formally \'etale coalgebras
and prove that they can be lifted uniquely and functorially against square zero extensions.
In Section 4 we define spherical and $p$-typical Witt vector style functors and prove that the
homology of any connected space is formally \'etale.

\subsection*{Conventions}
 Throughout the text, we use the following conventions:
\begin{itemize}
\item[(1)]  By category we always mean $(\infty,1)$-category and refer to $(1,1)$-categories as 
            $1$-categories. The text is \textit{model agnostic}, that is we make
            no reference to any specific model for the theory of $(\infty, 1)$-categories.
\item [(2)] We choose three nested Grothendieck universes $\cl{U} \subseteq \cl{V} \subseteq \cl{W}$,
            and refer to categories built from them as small categories, categories and
            large categories, respectively. We denote by $\rm{Cat}_{\infty}$ the large category
            of categories and disregard size issues from here on out.
\item[(3)] We denote the category of spaces by $\cl{S}$ and the category of spectra by $\rm{Sp}$.
\item[(4)] If $A, B$ are objects in some category $\cl{C}$, we use the words map and morphism
           $A\to B$ interchangeably to mean a point in the mapping space
           $\rm{Map}_{\cl{C}}(A,B) \in \cl{S}$. If moreover $\cl{C}$ is a stable category,
           we regard it as enriched over the category of spectra and write
           $\rm{map}_{\cl{C}}(A,B) \in \rm{Sp}$ for the mapping spectrum.
\item[(6)] By (co)algebra we always mean $\bb{E}_{\infty}$-(co)algebra. If we want to refer to
           a 1-categorical version of some gadget we call them \textit{discrete}.
\end{itemize}

\subsection*{Acknowledgments}
 This paper is based on my Master's thesis which was advised by Thomas Nikolaus and Achim Krause. I would
 like to thank them for guiding this project and being wonderful teachers throughout the years. I want
 to especially thank Achim Krause for many insightful discussions and his tremendous
 patience with my questions. The proof of the final results added in this version emerged from
 discussions with Robert Burklund and Tom Bachmann and utilizes their recent work on coalgebras.
 I am grateful for their input and fearless treatment of pro-objects. I was supported by the Danish
 National Research Foundation through the Copenhagen Center for Geometry and Topology (DNRF 151) for
 the later revisions on this work.
\addtocontents{toc}{\protect\setcounter{tocdepth}{2}}
\section{Coalgebras}
In this section, we recall the definition of an $\bb{E}_{\infty}$-coalgebra
in a symmetric monoidal category $\cl{C}$ and collect some facts about it.
Colimits of coalgebras are computed underlying and if $\cl{C}$ is presentable and the tensor
product commutes with colimits in each variable separately, then $\rm{cCAlg}(\cl{C})$ is again
presentable. For a map of connective $\bb{E}_\infty$-rings $f:R \to S$ this lets us introduce the adjunction
\[\begin{tikzcd}
	{\rm{cCAlg}(\rm{Mod}_R^{\rm{cn}})} & {\rm{cCAlg}(\rm{Mod}_S^{\rm{cn}})}
	\arrow[""{name=0, anchor=center, inner sep=0}, "{{f_\pt}}", shift left=2, from=1-2, to=1-1]
	\arrow[""{name=1, anchor=center, inner sep=0}, "{{f^\ast}}", shift left=2, from=1-1, to=1-2]
	\arrow["\dashv"{anchor=center, rotate=-90}, draw=none, from=1, to=0]
\end{tikzcd}\]
where $f^\pt$ is the base change and $f_\pt$ is conjured up by the adjoint functor theorem. Crucially,
$f_\pt$ is \textit{not} induced from the restriction of scalars on the level of module categories. 
We then consider the coalgebra structure on the $R$-chains of a space $X$ and the algebra structure on
the dual of a coalgebra in a presentably monoidal category. 

\subsection{Generalities}

\begin{definition}
  Let $\cl{C}$ be a symmetric monoidal category. We denote by $\rm{CAlg}(\cl{C})$ the category
  of $\bb{E}_{\infty}$-algebras in $\cl{C}$. For $\cl{C}= \rm{Sp}$ we write $\rm{CAlg}(\rm{Sp})= \rm{CAlg}$
  and refer to objects of $\rm{CAlg}$ simply as $\bb{E}_{\infty}$-rings.
\end{definition}

\begin{proposition}[Lurie]\label{calg}
  Let $(\cl{C}, \otimes)$ be a symmetric monoidal category, then the following statements hold:
  \begin{enumerate}
    \item The forgetful functor $U:\rm{CAlg}(\cl{C}) \to \cl{C}$ is conservative and
          commutes with limits.
    \item The coproduct of two algebras $R,S \in \rm{CAlg}(\cl{C})$ is given by the tensor
          product $R \otimes S$.
    \item If $\cl{C}$ is presentable and $\blank \otimes \blank$ commutes with colimits in both variables
          separately, then $\rm{CAlg}(\cl{C})$ is presentable as well.
  \end{enumerate}
\end{proposition}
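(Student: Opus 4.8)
The plan is to treat all three statements as instances of the general theory of algebras over the commutative $\infty$-operad, realizing $\rm{CAlg}(\cl{C})$ as $\rm{Alg}_{\rm{Comm}}(\cl{C})$, i.e.~as the full subcategory of sections of the coCartesian fibration $\cl{C}^{\otimes}\to \rm{Fin}_{\ast}$ spanned by those functors carrying inert morphisms to coCartesian edges. With this description in hand, the forgetful functor $U$ is identified with evaluation at $\langle 1\rangle$, and the three claims become statements about this evaluation and about the induced tensor and free-algebra functors.

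For (1), I would first note that $U$ is conservative: the inert-coCartesian condition forces the value of a section at any $\langle n\rangle$ to be determined, up to equivalence, by its value at $\langle 1\rangle$, so a morphism of algebras that becomes an equivalence after applying $U$ is already pointwise an equivalence and hence an equivalence. For limits, I would use that limits in the ambient functor category are computed pointwise, that the inert-coCartesian condition is stable under pointwise limits, and that $U$ is evaluation; this shows $U$ not merely commutes with but in fact \emph{creates} limits.

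For (2), the strategy is to verify the universal property of the coproduct directly, using commutativity. There are natural algebra maps $R\simeq R\otimes 1 \to R\otimes S$ and $S \simeq 1\otimes S \to R\otimes S$ induced by the units. Given any algebra $T$ with maps $f:R\to T$ and $g:S\to T$, the key point is that the multiplication $m_T:T\otimes T \to T$ is itself a map of commutative algebras, so the composite $R\otimes S \to T\otimes T \to T$, with first map $f\otimes g$ and second map $m_T$, is again an algebra map. I would then check that this construction is inverse to restriction along the two unit maps, yielding a natural equivalence $\rm{Map}(R\otimes S, T)\simeq \rm{Map}(R,T)\times \rm{Map}(S,T)$, which is exactly the assertion that $R\otimes S$ is the coproduct.

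For (3), the plan is to run the monadicity theorem. Under the hypothesis that $\cl{C}$ is presentable and $\otimes$ preserves colimits in each variable separately, the free-algebra functor $F:\cl{C}\to \rm{CAlg}(\cl{C})$, given on $X$ by $\bigoplus_{n\geq 0}(X^{\otimes n})_{h\Sigma_n}$, exists and is left adjoint to $U$, since the relevant colimits exist and $\otimes$ distributes over them. Combining this with part (1) — $U$ conservative and limit-preserving — and the observation that $U$ also preserves sifted colimits (again because symmetric powers commute with sifted colimits when $\otimes$ does), the Barr–Beck–Lurie theorem identifies $\rm{CAlg}(\cl{C})$ with modules over the monad $UF$. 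As $\cl{C}$ is presentable and this monad is accessible, I would conclude by invoking the result that algebras over an accessible monad on a presentable category again form a presentable category. The \emph{main obstacle} is precisely this last part: making the monadic comparison rigorous requires care in tracking exactly which colimits $U$ preserves and in checking accessibility of the monad, whereas parts (1) and (2) are essentially formal consequences of the operadic definition.
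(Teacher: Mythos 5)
Your proposal is correct in substance, but it takes a genuinely different route from the paper, which disposes of all three claims by citing Lurie (HA Lemma 3.2.2.6, Corollaries 3.2.2.5, 3.2.4.7 and 3.2.3.5) rather than proving anything. Your arguments for (1) and (3) are essentially sound reconstructions: evaluation at $\langle 1\rangle$ together with pointwise limits in the section category is exactly Lurie's argument for (1), and the Barr--Beck--Lurie route for (3) (free functor $\bigoplus_n (X^{\otimes n})_{h\Sigma_n}$, conservativity and preservation of sifted colimits by $U$, then presentability of modules over an accessible monad) is a legitimate alternative to Lurie's more direct proof that the algebra category admits colimits and is accessible; what it buys you is a reusable monadic description of $\rm{CAlg}(\cl{C})$, at the cost of having to verify that $U$ preserves sifted colimits, which itself requires the cofinality of the diagonal $K \to K^{n}$ for sifted $K$. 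The one place where your sketch is genuinely thinner than what a complete proof requires is (2): producing the maps $R \to R\otimes S \leftarrow S$ and the candidate inverse $(f,g)\mapsto m_{T}\circ(f\otimes g)$, and checking the two composites on homotopy classes, is the correct $1$-categorical argument, but in the $\infty$-categorical setting you must exhibit a \emph{natural} equivalence of mapping spaces $\rm{Map}(R\otimes S,T)\simeq \rm{Map}(R,T)\times\rm{Map}(S,T)$ with all coherences, and the clean way to do this is Lurie's criterion for a symmetric monoidal structure to be coCartesian (HA 2.4.3.19), which is how Corollary 3.2.4.7 is actually proved. Your outline can be completed along those lines, so I would call this a gap in rigor rather than a wrong approach; since the paper itself only cites these facts, either route is acceptable, but you should at least point to the coCartesian-structure criterion for (2).
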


\begin{proof}
  The first claim is a combination of~\cite[][Lemma 3.2.2.6]{ha} and~\cite[][Corollary 3.2.2.5]{ha}.
  The second is shown in~\cite[][Corollary 3.2.4.7]{ha} and the third is~\cite[][Corollary 3.2.3.5.]{ha}.
\end{proof}

\begin{definition}
  Let $\cl{C}$ be a symmetric monoidal category. The opposite category $\cl{C}\op$ inherits 
  a natural symmetric monoidal structure and we define the category of $\bb{E}_{\infty}$-coalgebras
  in $\cl{C}$ as
  \[\rm{cCAlg}(\cl{C}):= \rm{CAlg}(\cl{C}^{\rm{op}})^{\rm{op}}.\]
\end{definition}

\begin{remark}\label{colimits}
  In particular, a coalgebra $A \in \rm{cCAlg}(\cl{C})$ comes equipped with the datum of a 
  ``coherently commutative'' comultiplication and counit maps
  \begin{align*}
    \Delta_{A}: A &\to (A \otimes A)^{h\Sigma_2} \\
    \eta: A &\to 1_{\cl{C}},
  \end{align*}
  where $1_{\cl{C}}$ denotes the unit of $\cl{C}$. Note that, in general, there is no way
  to describe $\rm{cCAlg}(\cl{C})$ as a category of algebras in some suitable category $\cl{D}$.
  Nonetheless, Proposition~\ref{calg} immediately implies the following:
  \begin{enumerate}
    \item The forgetful functor $U:\rm{cCAlg}(\cl{C}) \to \cl{C}$ is conservative and commutes with colimits.
    \item The product of two coalgebras $R,S \in \rm{cCAlg}(\cl{C})$ is given by $R \otimes S$.
  \end{enumerate}
However, we cannot deduce presentability this way since the opposite of a presentable category is 
almost never presentable. 
\end{remark}

\begin{proposition}[Lurie]\label{present}
  Let $\cl{C}$ be a symmetric monoidal category such that $\blank \otimes \blank$ commutes with
  colimits in each variable separately and $\cl{C}$ is presentable. Then, the category
  $\rm{cCAlg}({\cl{C}})$ is presentable.
\end{proposition}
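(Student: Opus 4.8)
The plan is to use the standard characterization of presentable $\infty$-categories as exactly those that are both accessible and cocomplete \cite{htt}, and to verify these two properties separately for $\rm{cCAlg}(\cl{C})$.

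Cocompleteness is the formal dual of Proposition~\ref{calg}(1). By definition $\rm{cCAlg}(\cl{C}) = \rm{CAlg}(\cl{C}^{\rm{op}})^{\rm{op}}$, and since $\cl{C}$ is presentable, $\cl{C}^{\rm{op}}$ admits all small limits. The forgetful functor $\rm{CAlg}(\cl{C}^{\rm{op}}) \to \cl{C}^{\rm{op}}$ is conservative and preserves (indeed creates) small limits, so $\rm{CAlg}(\cl{C}^{\rm{op}})$ is complete; passing to opposites, $\rm{cCAlg}(\cl{C})$ admits all small colimits, computed by the conservative forgetful functor $U:\rm{cCAlg}(\cl{C}) \to \cl{C}$. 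This recovers the two bullet points recorded in the Remark.

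The real content is accessibility, and here one cannot simply dualize the argument for algebras, since $\cl{C}^{\rm{op}}$ is not accessible and Proposition~\ref{calg}(3) does not apply to it. Instead I would fix a regular cardinal $\kappa$ large enough that (i) $\cl{C}$ is $\kappa$-accessible, hence $\cl{C} \simeq \rm{Ind}_{\kappa}(\cl{C}^{\kappa})$ for the small full subcategory $\cl{C}^{\kappa}$ of $\kappa$-compact objects, and (ii) the tensor product carries $\kappa$-compact objects to $\kappa$-compact objects and commutes with $\kappa$-filtered colimits in each variable; both can be arranged by \cite[][Section 5.4.7]{htt}. Condition (ii) makes $\cl{C}^{\kappa}$ a symmetric monoidal subcategory, so that $\rm{cCAlg}(\cl{C}^{\kappa})$ is a small $\infty$-category, and it also guarantees that the $\kappa$-filtered colimits produced above are compatible with all the comultiplication maps $\Delta:A \to A^{\otimes n}$. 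The goal is then to prove the identification $\rm{cCAlg}(\cl{C}) \simeq \rm{Ind}_{\kappa}(\rm{cCAlg}(\cl{C}^{\kappa}))$, which exhibits $\rm{cCAlg}(\cl{C})$ as $\kappa$-accessible and completes the proof.

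This identification rests on two points, the second of which I expect to be the main obstacle. First, a coalgebra whose underlying object is $\kappa$-compact should itself be $\kappa$-compact in $\rm{cCAlg}(\cl{C})$: since $\kappa$-filtered colimits are computed underlying and $\otimes$ commutes with them, one reduces to checking that the totalization over the commutative operad computing the space $\rm{Map}_{\rm{cCAlg}(\cl{C})}(A,-)$ commutes with $\kappa$-filtered colimits, which holds because $\kappa$-filtered colimits commute with $\kappa$-small limits once $\kappa$ is chosen above the size of the relevant coherence data. Second, one needs an $\infty$-categorical \emph{fundamental theorem of coalgebras}: every coalgebra $A$ is the $\kappa$-filtered colimit of its $\kappa$-compact subcoalgebras. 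Classically (Sweedler) this is precisely how one sees that cocommutative coalgebras over a field form a locally presentable category, but the homotopy-coherent version is delicate, since producing the filtered system of subcoalgebras requires controlling the comultiplication $A \to A \otimes A$ together with all of its higher analogues simultaneously while refining $A$ as a $\kappa$-filtered colimit of $\kappa$-compacts in $\cl{C}$. Establishing this fundamental theorem --- equivalently, that the fully faithful functor $\rm{Ind}_{\kappa}(\rm{cCAlg}(\cl{C}^{\kappa})) \to \rm{cCAlg}(\cl{C})$ is essentially surjective --- is the technical heart of the argument.
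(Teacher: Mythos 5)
Your cocompleteness argument is fine; it is the formal dual of Proposition~\ref{calg}(1) and matches the remark in the text. The problem is the accessibility half. You correctly identify that the whole difficulty is the essential surjectivity of $\rm{Ind}_{\kappa}(\rm{cCAlg}(\cl{C}^{\kappa})) \to \rm{cCAlg}(\cl{C})$ --- an $\infty$-categorical Sweedler theorem asserting that every coalgebra is a $\kappa$-filtered colimit of coalgebras with $\kappa$-compact underlying object --- but you then declare it ``the technical heart of the argument'' without proving it. That is not a deferred technicality; it is the entire content of the proposition, and your proposal therefore has a genuine gap exactly where the work is.

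Worse, the specific identification you aim for is almost certainly not available: it would show that $\rm{cCAlg}(\cl{C})$ is $\kappa$-presentable for essentially the same $\kappa$ for which $\cl{C}$ is, whereas the paper explicitly remarks, immediately after the proposition, that one only obtains $\tau$-presentability for some \emph{unknown} $\tau \geq \kappa$, and flags this loss of control as ``one of the main defects'' of coalgebras relative to algebras. The homotopy-coherent Sweedler statement is precisely what fails (or at least is not known) in this generality, because producing a $\kappa$-compact subcoalgebra through a given $\kappa$-compact subobject requires closing up under the comultiplication and all of its higher coherences simultaneously, and there is no a priori bound on the size of the object this process generates. The paper itself gives no proof to compare against --- it cites Lurie's Corollary~3.1.4 of \emph{Elliptic Cohomology I} --- and that argument establishes accessibility by an indirect route that does not identify the compact objects of $\rm{cCAlg}(\cl{C})$ and does not pass through a fundamental theorem of coalgebras. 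To repair your proposal you would either need to prove the $\infty$-categorical Sweedler theorem (a substantial open-ended project well beyond this proposition) or abandon the $\rm{Ind}_{\kappa}$ strategy in favour of such an indirect accessibility argument.
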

\begin{proof}
  This is~\cite[][Corollary 3.1.4.]{ellI}.
\end{proof}

This can be seen as an analogue of the classical theorem by Sweedler that every coalgebra
in the 1-category of vector spaces over a field is a filtered colimit of its finite dimensional
sub-coalgebras, see~\cite{sweedler1969hopf}. However, unlike in the classical situation, if $\cl{C}$
is $\kappa$-presentable, we only deduce that $\rm{cCAlg}(\cl{C})$ is $\tau$-presentable for some 
$\tau \geq \kappa$. This is one of the main defects the category of coalgebras has over that
of algebras. 

\begin{definition}
    Let $\cl{C}$ be as in Proposition~\ref{present}. Since by Remark~\ref{colimits} the
    forgetful functor $U:\cCAlg(\cl{C})\to \cl{C}$ commutes with colimits, it admits
    a right adjoint 
    \[C:\cl{C}\to \cCAlg(\cl{C}).\]
    For any $M\in \cl{C}$ we call $C(M)$ the \textit{cofree coalgebra} on $M$.
\end{definition}

\begin{lemma}\label{limits}
  Let $\rm{Cat}_{\infty}^{\otimes}$ denote the (large) category of symmetric monoidal 
  categories and monoidal functors. Then the functors
  \[\rm{Cat}_{\infty}^{\otimes} \to \rm{Cat}_{\infty} \quad \cl{C}\mapsto \rm{CAlg}(\cl{C}),\]
  \[\rm{Cat}_{\infty}^{\otimes} \to \rm{Cat}_{\infty} \quad \cl{C} \mapsto \rm{cCAlg}(\cl{C})\]
  commute with limits.
\end{lemma}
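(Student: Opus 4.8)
The plan is to reduce the coalgebra statement to the algebra statement and then prove the latter operadically. For the reduction, recall from the definition that $\rm{cCAlg}(\cl{C}) = \rm{CAlg}(\cl{C}\op)\op$, where the symmetric monoidal structure on $\cl{C}\op$ is obtained by unstraightening $p\op$. The assignment $\cl{C} \mapsto \cl{C}\op$ is natural in strong monoidal functors and, being visibly involutive, defines an equivalence $(\blank)\op: \rm{Cat}_{\infty}^{\otimes} \to \rm{Cat}_{\infty}^{\otimes}$; in particular it preserves limits. Likewise $(\blank)\op: \rm{Cat}_{\infty} \to \rm{Cat}_{\infty}$ is an equivalence and preserves limits. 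Hence the coalgebra functor factors as the composite
\[ \rm{Cat}_{\infty}^{\otimes} \xrightarrow{(\blank)\op} \rm{Cat}_{\infty}^{\otimes} \xrightarrow{\rm{CAlg}} \rm{Cat}_{\infty} \xrightarrow{(\blank)\op} \rm{Cat}_{\infty}, \]
and it suffices to treat the algebra functor $\cl{C} \mapsto \rm{CAlg}(\cl{C})$.

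For the algebra case I would first record how limits in $\rm{Cat}_{\infty}^{\otimes}$ are computed. A symmetric monoidal $\infty$-category is precisely a commutative algebra object of $(\rm{Cat}_{\infty}, \times)$, so $\rm{Cat}_{\infty}^{\otimes} = \rm{CAlg}(\rm{Cat}_{\infty}^{\times})$. By Proposition~\ref{calg}(1) the underlying-category functor preserves and detects limits; more precisely, identifying a symmetric monoidal category with its straightened Segal object $\rm{Fin}_{\pt} \to \rm{Cat}_{\infty}$, the subcategory $\rm{Cat}_{\infty}^{\otimes} \subseteq \rm{Fun}(\rm{Fin}_{\pt}, \rm{Cat}_{\infty})$ is closed under limits and these are computed pointwise. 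Consequently, for a diagram $(\cl{C}_{i})$ with limit $\cl{C}$, the associated operad satisfies $\cl{C}^{\otimes} = \rm{Un}(\lim_{i} \rm{St}(\cl{C}_{i}^{\otimes}))$ with fibers $\cl{C}^{\otimes}_{\langle n\rangle} \simeq \lim_{i}(\cl{C}_{i})^{\otimes}_{\langle n \rangle}$, and the forgetful functor $\rm{Cat}_{\infty}^{\otimes} \to \rm{Op}_{\infty}$ preserves this limit.

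Now I would identify $\rm{CAlg}(\cl{C}) = \rm{Alg}_{\rm{Fin}_{\pt}}(\cl{C}^{\otimes}) = \rm{Fun}_{\rm{Op}}(\rm{Fin}_{\pt}, \cl{C}^{\otimes})$ with the $\infty$-category of operad maps out of the commutative operad. Since $\rm{Op}_{\infty}$ is closed symmetric monoidal under the Boardman--Vogt tensor product, the internal mapping functor $\rm{Fun}_{\rm{Op}}(\rm{Fin}_{\pt}, \blank)$ preserves limits in its second variable, giving $\rm{CAlg}(\cl{C}) = \lim_{i} \rm{CAlg}(\cl{C}_{i})$ as desired. More explicitly, one may instead describe $\rm{CAlg}(\cl{C})$ as the full subcategory of the section category $\rm{Fun}_{/\rm{Fin}_{\pt}}(\rm{Fin}_{\pt}, \cl{C}^{\otimes})$ spanned by the inert-preserving sections; the section functor preserves limits, and one checks that a section into $\cl{C}^{\otimes}$ is inert-preserving if and only if each of its components is, using that a morphism of $\rm{Un}(\lim_{i} \rm{St}(\cl{C}_{i}^{\otimes}))$ is $p$-coCartesian exactly when each component is $p_{i}$-coCartesian, since equivalences in a limit of $\infty$-categories are detected factorwise.

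The main obstacle is precisely this last compatibility: one must ensure that the limit defining $\cl{C}^{\otimes}$ --- taken in $\rm{Cat}_{\infty}^{\otimes}$ along \emph{strong} monoidal functors --- is computed compatibly enough with the passage to total spaces and sections that both the inert/coCartesian condition and the section category behave factorwise. This is exactly where the distinction between strong and lax monoidal functors matters: it is because the transition functors are strong monoidal, hence preserve coCartesian edges, that the pointwise limit of straightened diagrams again represents a symmetric monoidal category and that coCartesian edges are detected factorwise. Once this bookkeeping is in place, the remaining verifications are formal.
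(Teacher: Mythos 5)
Your proof is correct, and your reduction of the coalgebra case to the algebra case via the two $(\blank)\op$ equivalences is exactly the paper's. For the algebra case, however, you take a genuinely different route. The paper's argument is an adjoint-functor trick: using Proposition~\ref{calg}(2) (coproducts of algebras are tensor products), the functor $\rm{CAlg}(\blank)$ factors through $\rm{Cat}_{\infty}^{\amalg}$, and both $\rm{CAlg}(\blank):\rm{Cat}_{\infty}^{\otimes}\to \rm{Cat}_{\infty}^{\amalg}$ (right adjoint to the cocartesian monoidal structure functor) and the inclusion $\rm{Cat}_{\infty}^{\amalg}\rari{}\rm{Cat}_{\infty}$ (right adjoint to finite-coproduct completion) preserve limits, so the composite does. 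This is shorter but leans on the universal property of the cocartesian symmetric monoidal structure. You instead compute directly: you identify limits in $\rm{Cat}_{\infty}^{\otimes}$ as pointwise limits of Segal objects $\rm{Fin}_{\pt}\to\rm{Cat}_{\infty}$, pass to total spaces, and identify $\rm{CAlg}(\cl{C})$ with inert-preserving sections, checking that the inert/coCartesian condition is detected factorwise. Your argument is more hands-on and makes explicit what the paper's leaves implicit --- namely how limits in $\rm{Cat}_{\infty}^{\otimes}$ are actually formed --- at the cost of the coCartesian-edge bookkeeping, and you correctly isolate the one place where strong (as opposed to lax) monoidality of the transition functors is essential. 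Both the appeal to the closed structure on $\rm{Op}_{\infty}$ under the Boardman--Vogt tensor product and the explicit section-category argument you sketch as an alternative are standard and can be backed by the results in~\cite[\S 3.2.2]{ha}; either suffices. No gap.
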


\begin{proof}
  By Proposition~\ref{calg} the functor $\rm{CAlg}(\blank)$ factors through the (large) category 
  $\rm{Cat}_{\infty}^{\amalg}$ of categories which admit finite coproducts and functors which
  preserve finite coproducts. As such it admits a left adjoint which equips $\cl{D} \in \rm{Cat}_{\infty}^{\amalg}$ with the
  cocartesian monoidal structure. Moreover, the inclusion $\rm{Cat}_{\infty}^{\amalg} \rari{} \rm{Cat}_{\infty}$
  admits a left adjoint which takes a category $\cl{C}$ to the free finite coproduct completion, namely
  the full subcategory of $\rm{P}(\cl{C})$ spanned by finite coproducts of representables.
  Thus, both functors commute with limits, and so the composition $\CAlg(\blank)$ does as well. \\
  For the second functor, we simply observe that it is given by the composition 
  \[ \rm{Cat}_{\infty}^{\otimes} \rar{(\blank)\op} \rm{Cat}_{\infty}^{\otimes} \rar{\rm{CAlg(\blank)}} \rm{Cat}_{\infty}
  \rar{(\blank)\op}\rm{Cat}_{\infty},\]
which immediately implies the claim, since taking the opposite category is an involution, i.e.~an
equivalence of categories and thus commutes with limits.
\end{proof}

\subsection{Coalgebraic right adjoints and duality}

\begin{definition}[Lurie tensor product]
  Let $\rm{Pr^{L}}$ denote the (large) category of presentable categories
  with maps given by functors which commute with colimits. 
  By~\cite[][Proposition 4.8.1.15.]{ha} $\rm{Pr^{L}}$ inherits a natural
  symmetric monoidal structure, such that we have a map $\cl{C}\times \cl{D}\to \cl{C}\otimes \cl{D}$
  exhibiting $\rm{Fun}^{\rm{L}}(\cl{C}\otimes \cl{D},\cl{E})$ as the category of functors
  $\cl{C}\times \cl{D}\to \cl{E}$ which commute with colimits in each variable separately.
  We call $\rm{CAlg}(\rm{Pr^{L}})$ the category of \textit{presentably symmetric monoidal}
  categories.
\end{definition}

\begin{example}
  If $R$ is an $\bb{E}_{\infty}$-ring then the category of $R$-modules in spectra $\rm{Mod}_{R}$
  and of connective $R$-modules $\Mod_R^{\cn}$ are both presentable. 
\end{example}

\begin{definition}
By Proposition~\ref{present} the assignment $ \cl{C} \mapsto \cCAlg(\cl{C})$ defines a functor
\[ \rm{cCAlg}(\blank): \CAlg(\rm{Pr^L})\to \rm{Pr^L}.\]
 In particular, for any map $f:\cl{C} \to \cl{D}$  in $\CAlg(\rm{Pr^L})$ we get an adjunction 
\[\begin{tikzcd}
	{\rm{cCAlg}(\cl{C})} & {\rm{cCAlg}(\cl{D})}
	\arrow["{f^\pt}", shift left, from=1-1, to=1-2]
	\arrow["{f_\pt}", shift left, from=1-2, to=1-1].
\end{tikzcd}\]
Let $g:\cl{D}\to \cl{C}$ denote the right adjoint of $f$. The functor $f^\pt$ is induced by $f$, 
that is for any $A\in \cCAlg(\cl{C})$ the underlying object of $f^\pt(A)$
is given by $f(A)$. However, it is in general \textit{not true} that on underlying objects $f_\pt$ agrees with $g$.
Indeed, $g$ need only be \textit{lax symmetric monoidal} but $f_\pt$ preserves products of coalgebras which are given
by the monoidal products of $\cl{C}$ and $\cl{D}$. We call $f_\pt$ the \textit{coalgebraic right adjoint} of $f$.
\end{definition}

\begin{construction}\label{radj}
    Let $\cl{C},\cl{D} \in \rm{CAlg}(\rm{Pr^L})$ be stable and equipped with a compatible $t$-structure.
    Then $\cl{C}_{\geq 0}$ is closed under the tensor product and thus the inclusion
    \[ \cl{C}_{\geq 0} \rari{} \cl{C}\]
    is a map in $\CAlg(\rm{Pr^L})$. Thus it admits a coalgebraic right adjoint 
    \[ \tau_{\geq 0}^{c}: \rm{cCAlg}(\cl{C}) \to \cCAlg(\cl{C}_{\geq 0}).\]
    Since connective coalgebras form a full subcategory, it is true that $\tau_{\geq 0}(A) \simeq A$ whenever
    $A$ is connective but in general, $\tau_{\geq 0}^c(A)$ does not agree with the underlying connective cover of $A$.
    Moreover, if we have a map $f:\cl{C}\to \cl{D}$ in $\CAlg(\rm{Pr}^L)$ which preserves connective objects, then
    we get a commuting diagram of coalgebraic right adjoints
   \[\begin{tikzcd}
	{\rm{cCAlg}(\cl{C})} & {\rm{cCAlg}(\cl{D})} \\
	{\rm{cCAlg}(\cl{C}_{\geq 0})} & {\rm{cCAlg}(\cl{D}_{\geq 0})}
	\arrow["{f_!}"', from=1-2, to=1-1]
	\arrow["{\tau_{\geq 0}^c}", from=1-2, to=2-2]
	\arrow["{\tau^c_{\geq 0}}"', from=1-1, to=2-1]
	\arrow["{f_\ast}", from=2-2, to=2-1].
\end{tikzcd}\] 
Note that, for $A \in \cCAlg(\cl{D}_{\geq 0})$, whenever $f_!(A)$ happens to be connective,
it necessarily agrees with $f_\pt (A)$. 
\end{construction}

\begin{example}[Weil restriction]\label{adjoint}
    Let $f:R \to S$ be a map of connective $\E_\infty$-rings. The base
    change functor $f^\pt: \rm{Mod}_R \to \rm{Mod}_S$ is symmetric monoidal,
    commutes with colimits and preserves connective objects. Thus the restriction to connective modules
    induces a coalgebraic adjunctions
   \[\begin{tikzcd}
	{f^\pt:\rm{cCAlg}(\Mod_R)} & {\rm{cCAlg}(\Mod_S):f_!}
	\arrow[""{name=0, anchor=center, inner sep=0}, shift left=2, from=1-1, to=1-2]
	\arrow[""{name=1, anchor=center, inner sep=0}, shift left=2, from=1-2, to=1-1]
	\arrow["\dashv"{anchor=center, rotate=-90}, draw=none, from=0, to=1]
\end{tikzcd}\] 
   \[\begin{tikzcd}
	{f^\pt:\rm{cCAlg}(\Mod_R^{\rm{cn}})} & {\rm{cCAlg}(\Mod_S^{\rm{cn}}):f_\pt}
	\arrow[""{name=0, anchor=center, inner sep=0}, shift left=2, from=1-1, to=1-2]
	\arrow[""{name=1, anchor=center, inner sep=0}, shift left=2, from=1-2, to=1-1]
	\arrow["\dashv"{anchor=center, rotate=-90}, draw=none, from=0, to=1].
\end{tikzcd}\] 
For $A\in \cCAlg(\Mod_R)$ we refer to $f_! A$ as the \textit{Weil restriction} and to $f_\pt \tau_{\geq 0}^c A$ 
as the \textit{connective Weil restriction} of $A$ along $f$. Observe that we have a commutative diagram
in $\rm{Pr^L}$
\[\begin{tikzcd}
	{\cCAlg(\Mod_R)} & {\cCAlg(\Mod_S)} \\
	{\Mod_R} & {\Mod_S}
	\arrow["{f^\pt}", from=1-1, to=1-2]
	\arrow[from=1-1, to=2-1]
	\arrow[from=1-2, to=2-2]
	\arrow["{f^\pt}"', from=2-1, to=2-2].
\end{tikzcd}\]
Thus, taking right adjoints gives a commutative diagram
\[\begin{tikzcd}
	{\cCAlg(\Mod_R)} & {\cCAlg(\Mod_S)} \\
	{\Mod_R} & {\Mod_S}
	\arrow["{f_!}"', from=1-2, to=1-1]
	\arrow["{C_R}", from=2-1, to=1-1]
	\arrow["{C_S}"', from=2-2, to=1-2]
	\arrow["{f_\pt}", from=2-2, to=2-1],
\end{tikzcd}\]
which tells us that Weil restriction commutes with the cofree coalgebra functors in the sense that
\[ f_!C_S(M) \simeq C_R(f_\pt M)\]
for any $M\in \Mod_S$ where $f_\pt M$ denotes the usual restriction of scalars. Note that we can draw the same
diagram where we replace all module categories by connective modules. Even though we care mainly about the
connective Weil restriction functor, the coalgebraic connective cover of the cofree coalgebra is even less
approachable than the cofree coalgebra itself, so we shall not utilize this. 
Instead, in Section~\ref{homocoalg} we will compute with the non-connective Weil-restriction 
and see that the result is connective.
\end{example}

\begin{example}[Duality]\label{predual}
  For any $\cl{C}\in \CAlg(\rm{Pr^L})$ and $X\in \cl{C}$,
  the functor $X\otimes \blank :\cl{C} \to \cl{C}$ commutes with colimits and thus admits a right adjoint
  $\rm{map}_{\cl{C}}(X, \blank)$.
  This assignment defines a functor $\cl{C}\op \to \rm{Fun}(\cl{C}, \cl{C})$, and we denote
  its adjoint as
  \[ \rm{map}_{\cl{C}}(\blank, \blank): \cl{C}^{\op} \times \cl{C} \to \cl{C}.\]
  The target of $(\blank)^\vee:=\rm{map}_\cl{C}(\blank, \mathds{1}_\cl{C}): \cl{C} \to \cl{C}\op$ 
  is not presentable, so this is not a map in $\CAlg(\rm{Pr^L})$. However, it still admits a coalgebraic 
  right adjoint via the following construction. The functor $\rm{map}_\cl{C}(\blank, \blank) $ is lax monoidal with 
  respect to the monoidal structure on $\cl{C}\op \times \cl{C}$, defined by
\[ (A,B)\otimes_{\cl{C}\op \times \cl{C}} (C,D):= (A \otimes C, B \otimes D).\]
Thus, it refines to a functor
\[ \rm{map}_{\cl{C}}(\blank, \blank): \rm{CAlg}(\cl{C}\op\times \cl{C})
  \simeq \rm{cCAlg}(\cl{C})\op \times \rm{CAlg}(\cl{C})
  \to \rm{CAlg}(\cl{C}).\]
In particular, for each $R \in \rm{CAlg}(\cl{C})$ we get a functor
\begin{align*}
  \rm{map}_{\cl{C}}(\blank, R):\rm{cCAlg}(\cl{C}) = \rm{CAlg}(\cl{C}\op)\op \to \rm{CAlg}(\cl{C})\op.
\end{align*}
Since colimits of coalgebras and limits of algebras are computed underlying, the assignment
\[A \mapsto A^\vee:=\rm{map}_{\cl{C}}(A, \mathds{1}_{\cl{C}}) \]
takes colimits of coalgebras to limits of algebras. By the adjoint functor theorem, we get an adjunction.
\[\begin{tikzcd}
	{(\blank)^\vee:\cCAlg(\cl{C})} & {\rm{CAlg}(\cl{C}):(\blank)^\circ}
	\arrow[""{name=0, anchor=center, inner sep=0}, shift left=2, from=1-1, to=1-2]
	\arrow[""{name=1, anchor=center, inner sep=0}, shift left=2, from=1-2, to=1-1]
	\arrow["\dashv"{anchor=center, rotate=-90}, draw=none, from=0, to=1].
\end{tikzcd}\]
For $A \in \rm{cCAlg}(\cl{C})$ and $R \in \rm{CAlg}(\cl{C})$. We call $A^\vee$ the \textit{dual algebra}
of $A$ and $R^\circ$ the \textit{pre-dual coalgebra} of $R$.
\end{example}

If the underlying object of $R\in \rm{CAlg}(\cl{C})$ is dualizable, then the dual $R^\vee$ also inherits a natural
coalgebra structure. In fact this construction agrees with $R^\circ$ as we will see in Corollary~\ref{dualad}.

\begin{proposition}\label{duality}
Let $\cl{C}$ be a symmetric monoidal category and denote by $\cl{C}^{\rm{dual}}$ the full subcategory
spanned by the dualizable objects. The functor
  \[ \rm{cCAlg}(\cl{C}^{\rm{dual}})\op \rar{\sim} \rm{CAlg}(\cl{C}^{\rm{dual}})
  \quad A \mapsto A^{\vee}\]
  is an equivalence of categories with inverse taking $R\in \rm{CAlg}(\cl{C}^{\rm{dual}})$
  to the dual $R^{\vee}$ with the induced coalgebra structure.
\end{proposition}
\begin{proof}
This is immediate from \cite[][Proposition 3.2.4]{ellI}.
\end{proof}

For the next lemma we employ the following terminology:\\
For a functor $F:\cl{C}\to \cl{D}$ we say that $A \in \cl{C}$ is $F$-local, if the natural
transformation $\rm{Map}_{\cl{C}}(A, \blank) \to \rm{Map}_{\cl{D}}(F(A), F(\blank))$
is an equivalence.
\begin{lemma}\label{laxlocal}
  Let $\cl{C}, \cl{D}$ be symmetric monoidal $\infty$-categories and $F:\cl{C}\to \cl{D}$ be a lax symmetric
  monoidal functor. Suppose we have $R\in \rm{CAlg}(\cl{C})$ such that each tensor power
   $R^{\otimes n}$ considered as an object in $\cl{C}$ is $F$-local. Then the map
  \[ \rm{Map}_{\rm{CAlg(\cl{C})}}(R, S)\rar{F} \rm{Map}_{\rm{CAlg}(\cl{D})}(F(R), F(S)) \]
  is an equivalence.
\end{lemma}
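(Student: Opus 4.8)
The plan is to compute both sides as totalizations of cosimplicial spaces in which only the tensor powers $R^{\otimes n}$ appear as the objects one maps \emph{out of}, since these are precisely the objects the hypothesis lets us treat as $F$-local. The reason to insist on tensor powers is structural: $F$-locality of $X$ is a statement about the corepresented functor $\rm{Map}_{\cl{C}}(X,\blank)$, and such a condition is stable under the limits (totalizations, and the homotopy fixed points by the symmetric groups coming from the commutative operad) that occur when one organizes $\rm{Map}_{\rm{CAlg}(\cl{C})}(R,S)$ along the arity filtration. By contrast, a naive resolution of $R$ by free algebras would replace $R^{\otimes n}$ by symmetric powers $\coprod_k (R^{\otimes k})_{h\Sigma_k}$, and $F$-locality is \emph{not} preserved by these colimits (a lax $F$ does not commute with coproducts or homotopy orbits), so that route is closed off. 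Concretely, I would write $\rm{Map}_{\rm{CAlg}(\cl{C})}(R,S)\simeq \rm{Tot}_{[n]}\,\cl{X}^{\bullet}_{\cl{C}}$ with $\cl{X}^{n}_{\cl{C}}$ assembled from $\rm{Map}_{\cl{C}}(R^{\otimes n},S)$, using the forgetful functor $U\colon\rm{CAlg}(\cl{C})\to\cl{C}$ of Proposition~\ref{calg}, which is conservative and preserves limits.

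Since $F$ is lax symmetric monoidal it induces $F_{\ast}\colon\rm{CAlg}(\cl{C})\to\rm{CAlg}(\cl{D})$ with $U_{\cl{D}}F_{\ast}\simeq F U_{\cl{C}}$, and this passes through the cobar construction, giving a map of cosimplicial spaces $\cl{X}^{\bullet}_{\cl{C}}\to\cl{X}^{\bullet}_{\cl{D}}$ that refines the map in the statement. As totalization is itself a limit, it is enough to prove the comparison is an equivalence in each cosimplicial degree. In degree $n$ the source object is $R^{\otimes n}$, which is $F$-local by hypothesis, so the map $\rm{Map}_{\cl{C}}(R^{\otimes n},S)\to\rm{Map}_{\cl{D}}(F(R^{\otimes n}),FS)$ is an equivalence; here $n=1$ recovers the equivalence on underlying mapping spaces and $n=0$, i.e.\ $R^{\otimes 0}=1_{\cl{C}}$, takes care of the unit. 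I would then totalize the degreewise equivalences to conclude.

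The hard part will be the very last identification in each degree: the term appearing on the $\cl{D}$-side is $\rm{Map}_{\cl{D}}\bigl((FR)^{\otimes n},FS\bigr)$, built from the tensor powers of $F_{\ast}R$ in $\cl{D}$, whereas $F$-locality produces $\rm{Map}_{\cl{D}}\bigl(F(R^{\otimes n}),FS\bigr)$. These are compared by the lax structure maps $(FR)^{\otimes n}\to F(R^{\otimes n})$, which point the ``wrong way'' and are \emph{not} invertible for a general lax functor, so the degreewise equivalence is not formal and this is exactly where the tailored hypothesis must do its work. My expectation is that $F$-locality of \emph{all} the lower tensor powers $R^{\otimes k}$, together with the closed structure on the categories in play, forces these comparison maps to become invertible after corepresenting: rewriting $\rm{Map}_{\cl{D}}\bigl((FR)^{\otimes n},FS\bigr)$ via the internal hom and peeling off one tensor factor at a time, applying $F$-locality of $R^{\otimes k}$ for $k<n$ at each step, in an induction on $n$ whose base case $n\le 1$ is the hypothesis itself. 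Establishing this invertibility of the lax comparison maps is the crux of the argument; once it is in place, the degreewise equivalences assemble and the totalization gives the claim.
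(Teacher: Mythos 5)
Your overall strategy --- decompose $\rm{Map}_{\rm{CAlg}(\cl{C})}(R,S)$ into a limit of mapping spaces whose sources are the tensor powers $R^{\otimes n}$, apply locality term by term, and reassemble --- is the same as the paper's, but both of your main steps are left unestablished. First, the decomposition itself: you assert an equivalence $\rm{Map}_{\rm{CAlg}(\cl{C})}(R,S)\simeq \rm{Tot}\,\cl{X}^{\bullet}_{\cl{C}}$ with $\cl{X}^{n}_{\cl{C}}$ ``assembled from'' $\rm{Map}_{\cl{C}}(R^{\otimes n},S)$, but you never say what this cosimplicial space is or why its totalization computes the algebra mapping space; as you yourself observe, the standard monadic resolution produces symmetric powers rather than tensor powers, so it cannot be that. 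The decomposition that actually has the property you want is not a totalization at all: the paper views the algebra structures as sections $s_{R}, s_{S}$ of the operad $p:\cl{C}^{\otimes}\to\rm{Fin}_{\pt}$ and uses that the space of maps of sections is a limit, over the twisted arrow category of $\rm{Fin}_{\pt}$, of the multimapping spaces $\rm{Map}^{\varphi}_{\cl{C}^{\otimes}}(s_{R}(\langle n\rangle), B)\simeq\prod_{i}\rm{Map}_{\cl{C}}(R^{\otimes k_{i}},B_{i})$. You would need either to reproduce this or to supply an actual cosimplicial model with the features you claim.

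Second, and more seriously, the step you yourself flag as the crux is not carried out: you only record an ``expectation'' that precomposition along the lax structure morphisms $(FR)^{\otimes n}\to F(R^{\otimes n})$ becomes invertible on mapping spaces into $FS$, to be proven by an induction peeling off tensor factors through internal homs. That induction is not given, it invokes a closed monoidal structure the lemma does not assume, and I do not see how it could run: $F$-locality of $R^{\otimes k}$ controls the transformation $\rm{Map}_{\cl{C}}(R^{\otimes k},\blank)\to\rm{Map}_{\cl{D}}(F(R^{\otimes k}),F(\blank))$ and says nothing about maps out of $(FR)^{\otimes k}$, an object which need not lie in the image of $F$. A proof is not complete with its central step recorded as a conjecture. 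For what it is worth, you have put your finger on a point that the paper's own proof also passes over quickly --- the displayed chain of equivalences there moves from $\rm{Map}_{\cl{C}}(R^{\otimes k_{i}},B_{i})$ directly to $\rm{Map}_{\cl{D}}(F(R)^{\otimes k_{i}},F(B_{i}))$, which is exactly the composite of the locality equivalence with restriction along the lax structure map --- but identifying a difficulty is not the same as resolving it, and as written your argument does not prove the lemma.
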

\begin{proof}
  Since $F$ is lax symmetric monoidal, it induces a map of $\infty$-operads
  \[\begin{tikzcd}
	{\cl{C}^\otimes} && {\cl{D}^\otimes} \\
	& {\rm{Fin}_\pt}
	\arrow["p"', from=1-1, to=2-2]
	\arrow["q", from=1-3, to=2-2]
	\arrow["f", from=1-1, to=1-3]
\end{tikzcd}\]
which takes any $(A_{1}, \dots, A_{n})\in \cl{C}^{\otimes}_{\langle n \rangle}$ to the point
$(F(A_{1}), \dots, F(A_{n}))\in \cl{D}^{\otimes}_{\langle n \rangle}$. A commutative algebra structure on an object
$R\in \cl{C}$ is precisely given by a a section $s_{R}$ of $p$ which takes $\langle n \rangle $ to
$(R, \dots, R) \in \cl{C}_{\langle n \rangle }$ and maps inert morphisms to inert morphisms. Let
$\varphi: \langle n \rangle \to \langle m \rangle $ be a morphism in $\rm{Fin}_{\pt}$ and denote by $p_{i}: \langle n \rangle \to \langle 1 \rangle$ the unique inert
map which sends $i \mapsto 1$. For each $i$ we have a factorization
\[ \langle n \rangle \rar{\psi_{i}} \langle k_{i}\rangle \rar{\pi_{i}} \langle 1 \rangle\]
of $p_{i}\circ \varphi$ into an inert map $\psi_{i}$ and an active map $\pi_{i}$.
Then for each $B= (B_{1}, \dots B_{m})\in \cl{C}^{\otimes}$ we get equivalences
\begin{align*}
  \rm{Map}^{\varphi}_{\cl{C}^{\otimes}}(s_{R}(\langle n \rangle), B)
  &\simeq \prod_{i =1, \dots, m}\rm{Map}^{ p_{i}\circ \varphi}_{\cl{C}^{\otimes}}((R, \dots, R), B_{i})\\
  &\simeq \prod_{i =1, \dots, m} \rm{Map}_{\cl{C}}(R^{\otimes k_{i}}, B_{i})\\
  &\simeq \prod_{i =1 ,\dots, m} \rm{Map}_{\cl{D}}(F(R)^{\otimes k_{i}}, F(B_{i}))\\
  &\simeq \prod_{i =1 , \dots m } \rm{Map}^{p_{i} \circ \varphi}_{\cl{D}^{\otimes}}((f \circ s_{R})(\langle n \rangle), B_{i})\\
  &\simeq \rm{Map}_{\cl{D}^{\otimes}}^{\varphi}((f \circ s_{R})(\langle n \rangle), B),
\end{align*}
hence each value of $s_{R}: \rm{Fin}_{\pt}\to \cl{C}^{\otimes}$ is $f$-local, and thus $s_{R}$
is local with respect to the functor $f_{\pt}: \Gamma(p) \to \Gamma(q)$. Since $\rm{CAlg}(\cl{C})$
and $\rm{CAlg}(\cl{D})$ are full subcategories of $\Gamma(p)$ and $\Gamma(q)$ respectively, the
claim follows.
\end{proof}

\begin{corollary}\label{dualad}
Let $\cl{C}$ be a symmetric monoidal category and $A,B \in \rm{cCAlg}(\cl{C})$ with $A$ dualizable.
The natural map
\[ \rm{Map}_{\rm{cCAlg}(\cl{C})}(B,A)\to\rm{Map}_{\rm{CAlg}(\cl{C})}(A^{\vee}, B^{\vee})\]
is an equivalence. In particular, we have that $(A^\vee)^\circ \simeq A$.
\end{corollary}
\begin{proof}
 This is immediate by applying Lemma~\ref{laxlocal} to the duality functor 
 $(\blank)^{\vee}:\cl{C}\op \to \cl{C}$ since dualizable objects are closed under tensor products.
\end{proof}

We now review why the homology of a space carries an $\bb{E}_{\infty}$-coalgebra structure. The following
lemma in particular shows that every $X\in \cl{S}$ admits a unique coalgebra structure with respect 
to the cartesian product induced from the diagonal map $X\rar{\Delta} X\times X$.

\begin{lemma}\label{trivcalg}
  Let $\cl{C}$ be a category which admits finite products equipped with the cartesian monoidal structure.
  Then the forgetful functor $\rm{cCAlg}(\cl{C}) \to \cl{C}$ is an equivalence.
\end{lemma}
\begin{proof}
  By~\cite[][Corollary 2.4.4.10.]{ha} the map $\rm{CAlg}(\cl{C}\op) \to \cl{C}\op $ is an equivalence, hence
  the claim follows by applying $(\blank)\op$.
\end{proof}

The following generalization of the Eilenberg \textendash Zilber Theorem equips every ``generalized suspension''
of a space with the structure of an $\bb{E}_{\infty}$-coalgebra.

\begin{proposition}\label{chains}
  Let $\cl{C}$ a presentably symmetric monoidal category. The functor
  \begin{align*}
    \cl{S} \to \cl{C} \qquad X \mapsto 1_{\cl{C}}[X]
  \end{align*}
  which sends a space $X$ to the colimit over the constant diagram $X \to \pt \rar{1_{\cl{C}}} \cl{C}$
  is symmetric monoidal with respect to the cartesian monoidal structure on $\cl{S}$.
  In particular, this induces a functor
  \[ \cl{S} \simeq \rm{cCAlg}(\cl{S}) \to \rm{cCAlg}(\cl{C}) \quad X \mapsto 1_{\cl{C}}[X]\]
\end{proposition}

\begin{proof}
  Since $\blank \otimes \blank$ commutes with colimits in both variables
  separably by assumption, we have that:
  \begin{align*}
    (\colim_X 1_{\cl{C}})\otimes (\colim_Y 1_{\cl{C}})
    \simeq \colim_X \colim_Y (\underbrace{1_{\cl{C}}\otimes 1_{\cl{C}}}_{\simeq 1_{\cl{C}}}) \simeq \colim_{X \times Y} 1_{\cl{C}}.
  \end{align*}
  The second statement then follows from Lemma~\ref{trivcalg}.
\end{proof}

\begin{example}\label{homology}
By Proposition~\ref{chains}, for each $\bb{E}_{\infty}$-ring $R$, the $R$-homology functor
\begin{align*}
  \cl{S} \to \rm{Mod}_{R} \quad X \mapsto R[X]
\end{align*}
is symmetric monoidal and thus refines to a functor
\[ R[\blank]: \cl{S}\simeq \rm{cCAlg}(\cl{S})\to \rm{cCAlg}(\Mod_{R}). \]
Hence, the $R$-homology of a space $X$ carries a natural coalgebra structure. Moreover,the $R$-cohomology of $X$
  \[ R[X]^\vee = \rm{map}_{\Mod_R}(R[X], R) \simeq \lim_{X}R = R^{X}\]
  inherits a natural $R$-algebra structure. If $X$ is finite, then $R[X]$ is dualizable and so by 
  Corollary~\ref{dualad} we have $(R^X)^\circ \simeq R[X]$.
\end{example}

\section{Review of deformation theory}
In this chapter we review the theory of square zero extensions and the deformation theory of functors
$X:\rm{CAlg}^{\rm{cn}} \to \cl{S}$, as developed in~\cite{ha} and~\cite{dag14}.
Let $R^\eta \to R$ be a square zero extensions of connective $\E_\infty$-rings with fiber
$M \in \rm{Mod}_{R}^{\rm{cn}}$. The goal is to understand the fiber
$\rm{fib}_{A}(X(R^\eta)\to X(R))$ over some point $A\in X(R)$.
The key insight is that $R^\eta\to R$ is classified by a map into
the split square zero extension $R \oplus M[1]\in \rm{cCAlg}_{/R}$. 
For well behaved functors $X:\rm{CAlg}^{\rm{cn}} \to \cl{S}$
the space $X(R^\eta)$ can then be obtained by pulling back along the induced map
$ X(R) \to X(R \oplus M[1])$. In this case the fiber $\rm{fib}_{A}(X(\widetilde{R})\to X(R))$ is given a path space in $X(R \oplus M[1])$. Moreover, for each $A\in X(R)$ writing
\[ X_{A}^{R\oplus M[n]}:=\rm{fib}_{A}(X(R\oplus M[n])\to X(R))\]
the sequence $\{X_{A}^{R\oplus M[n]}\}_{n\in \bb{N}}$ defines a spectrum
$T^{M}_{X_{A}}$ which collects the deformation theoretic data into one spectrum 
called the \textit{tangent complex} of $X$ at the point $A$. 

\subsection{Square zero extensions and deformations}
\begin{proposition}\label{loops}
  For every $\bb{E}_{\infty}$-ring $R$ there is an equivalence of categories
  \[\rm{Mod}_{R} \rar{\sim} \rm{Sp}(\rm{CAlg}_{/R})\]
  such that for each $n \geq 0$ the functor
  \[ \rm{Mod}_{R} \rar{\sim} \rm{Sp}(\rm{CAlg}_{/R}) \rar{\Omega^{\infty -n}} \rm{CAlg}_{/R} \]
  sends a module $M$ to an augmented $R$-algebra whose underlying $R$-module is given by
  the direct sum $R \oplus M[n]$.
\end{proposition}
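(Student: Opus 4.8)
The plan is to reduce the statement to a computation about augmented algebras, identify the relevant stabilization via monadicity, and then read off the formula for $\Omega^{\infty-n}$ by hand.

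First I would point the category. Since $\rm{CAlg}_{/R}$ admits finite limits, its stabilization is by definition that of the pointed category $(\rm{CAlg}_{/R})_{*/}$ of objects under the terminal object. The terminal object of $\rm{CAlg}_{/R}$ is $\id_{R}\colon R\to R$, and an object under it is exactly a factorization $R\to A\to R$ of the identity, i.e.\ an augmented $R$-algebra. Writing $\rm{CAlg}_{R}^{\rm{aug}}:=(\rm{CAlg}_{R})_{/R}$ for this category, which is pointed with zero object $R$, we get $\rm{Sp}(\rm{CAlg}_{/R})\simeq \rm{Sp}(\rm{CAlg}_{R}^{\rm{aug}})$. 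It thus suffices to produce an equivalence $\rm{Mod}_{R}\rar{\sim}\rm{Sp}(\rm{CAlg}_{R}^{\rm{aug}})$ with the asserted behaviour on $\Omega^{\infty-n}$.

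Next I would set up the monadic adjunction. The augmentation ideal functor $I\colon \rm{CAlg}_{R}^{\rm{aug}}\to \rm{Mod}_{R}$, $A\mapsto \rm{fib}(A\to R)$, has left adjoint $M\mapsto \rm{Sym}_{R}(M)$ with augmentation ideal $\bigoplus_{n\geq 1}\rm{Sym}_{R}^{n}(M)$. The functor $I$ is conservative (from the fibre sequence $I(A)\to A\to R$ and two-out-of-three) and preserves sifted colimits (it is the forgetful functor to $\rm{Mod}_{R}$ followed by $\rm{fib}(\blank\to R)$, both of which do), so Barr--Beck--Lurie exhibits $\rm{CAlg}_{R}^{\rm{aug}}$ as monadic over $\rm{Mod}_{R}$ for the monad $T(M)=\bigoplus_{n\geq 1}\rm{Sym}_{R}^{n}(M)$. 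As a finite-limit-preserving functor, $I$ induces $\rm{Sp}(I)\colon \rm{Sp}(\rm{CAlg}_{R}^{\rm{aug}})\to \rm{Sp}(\rm{Mod}_{R})\simeq \rm{Mod}_{R}$, the last equivalence because $\rm{Mod}_{R}$ is already stable. The crux is that stabilization is compatible with this monadic structure: because $T$ is assembled from the sifted-colimit-preserving symmetric powers, the induced monad on $\rm{Sp}(\rm{Mod}_{R})=\rm{Mod}_{R}$ is the linearization $\partial T=\bigoplus_{n\geq 1}\partial(\rm{Sym}_{R}^{n})$. Since $\rm{Sym}_{R}^{n}$ is $n$-homogeneous its first derivative vanishes for $n\geq 2$, while $\rm{Sym}_{R}^{1}=\id$, so $\partial T\simeq \id$. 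Hence $\rm{Sp}(\rm{CAlg}_{R}^{\rm{aug}})$ is equivalent to the category of algebras over the identity monad on $\rm{Mod}_{R}$, which is just $\rm{Mod}_{R}$, and $\rm{Sp}(I)$ is the desired equivalence.

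Finally I would pin down $\Omega^{\infty-n}$ by exhibiting the inverse concretely. The trivial square zero functor $R\oplus\blank\colon\rm{Mod}_{R}\to\rm{CAlg}_{R}^{\rm{aug}}$ is computed on underlying modules, so loops, i.e.\ $\Omega X=R\times_{X}R$, give $\Omega(R\oplus M)\simeq R\times_{R\oplus M}R\simeq R\oplus M[-1]$. Thus the sequence $(R\oplus M[n])_{n\geq 0}$ satisfies $R\oplus M[n]\simeq \Omega(R\oplus M[n+1])$ and defines a spectrum object whose image under $\rm{Sp}(I)$ is $M$; reading off $\Omega^{\infty-n}$ then yields $R\oplus M[n]$, as claimed. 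The main obstacle is the third step: justifying that stabilization intertwines with the monad and that the symmetric powers linearize to the identity monad (equivalently, that after stabilization the only surviving part of the algebra structure is the trivial square-zero multiplication). This is precisely the content of Lurie's analysis of the tangent bundle of $\rm{CAlg}$, which I would invoke for the homogeneity and linearization statements, see~\cite[][Section 7.3]{ha}.
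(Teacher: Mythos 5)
Your argument is correct in outline, but it takes a genuinely different route from the paper's: the paper disposes of this proposition with a one-line citation of~\cite[][Theorem 7.3.4.13]{ha}, whereas you actually reprove it via the monadic adjunction between the augmentation ideal functor $I$ and the free augmented algebra functor $\rm{Sym}_{R}$, together with the Goodwillie-calculus fact that stabilization replaces the monad $T=\bigoplus_{n\ge 1}\rm{Sym}^{n}_{R}$ by its linearization $\partial_{1}T\simeq \id$. Your reductions are sound: identifying $\rm{Sp}(\rm{CAlg}_{/R})$ with $\rm{Sp}(\rm{CAlg}_{R}^{\rm{aug}})$ is immediate from the definition of stabilization of a non-pointed category, conservativity and sifted-colimit-preservation of $I$ hold (the latter using stability of $\rm{Mod}_{R}$ and weak contractibility of sifted diagrams), and $\partial_{1}\rm{Sym}^{n}_{R}=0$ for $n\ge 2$ is correct. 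Two caveats. First, the step you yourself flag as the crux --- that the stabilized adjunction is again monadic and that its monad is the derivative $\partial_{1}T$ --- is not what~\cite[][Section 7.3]{ha} actually proves: Lurie's own argument for Theorem 7.3.4.13 runs through the tangent-bundle machinery for $\infty$-operads rather than through linearization of the symmetric-powers monad, so your citation does not quite discharge the debt; you would need to prove the monadic-stabilization lemma (Barr--Beck for $\rm{Sp}(I)$ plus the identification $\rm{Sp}(I)\circ\rm{Sp}(F)\simeq \partial_{1}(IF)$, using that $I$ preserves filtered colimits) or cite a source that states it in this form. Second, in the final step you identify $\Omega(R\oplus M)$ with $R\oplus M[-1]$ only on underlying modules; to see that the sequence $(R\oplus M[n])_{n}$ assembles into a spectrum object of $\rm{CAlg}_{R}^{\rm{aug}}$ you should note that $M\mapsto R\oplus M$ is a right adjoint (to $A\mapsto L_{A/R}\otimes_{A}R$), hence preserves limits, so that $\Omega(R\oplus M[n+1])\simeq R\oplus M[n]$ holds already as augmented algebras. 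What your approach buys over the paper's bare citation is an explicit inverse equivalence and a conceptual explanation of why only the square-zero multiplication survives stabilization; what it costs is the as-yet-unsourced linearization lemma.
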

\begin{proof}
This is~\cite[][Theorem 7.3.4.13]{ha}.
\end{proof}

\begin{remark}
  For a connective $\bb{E}_{\infty}$-ring $R$ and a connective $R$-module $M$ we call
  $ \Omega^{\infty} M =R \oplus M$ with the $R$-algebra structure described above the
  \textit{split square zero extension} of $R$ along $M$. If $R$ and $M$ are discrete, the
  multiplication is explicitly given by
  \[(a+m)(b +n) := ab + an +mb ,\]
  i.e.~ we have that $R \oplus R \simeq R[x]/x^{2}$.
\end{remark}

\begin{definition}
  For a connective $\bb{E}_{\infty}$-ring $R$ and a connective $R$-module $M$ we say that a map $R^{\eta} \to R$ is
  a \textit{square zero extension} of $R$ along $M$ if it fits into
  a pullback diagram
  \[\begin{tikzcd}
	{R^\eta} & R \\
	R & {R \oplus M[1]}
	\arrow[from=1-1, to=1-2]
	\arrow["{(\id, 0)}", from=1-2, to=2-2]
	\arrow[from=1-1, to=2-1]
	\arrow["{(\id,\eta)}"', from=2-1, to=2-2].
\end{tikzcd}\]
Moreover, we call the mapping space $\rm{Map}_{\rm{CAlg}_{/R}}(R, R \oplus M)$ the
space of \textit{derivations} $\eta:R \to M$.
\end{definition}
This definition make sense without any connectivity assumptions. However, since our interest
lies primarily in the case where everything is connective, we have chosen to include them
in the definition to avoid awkward terminology.

\begin{remark}\label{cohsq0}
  Note that, the split square zero extension is precisely the one classified by the zero derivation
  \[R \rar{(\id,0)} R \oplus M[1].\]
   Moreover, if $p:R^{\eta} \to R$ is any square zero extension classified by a derivation 
   $\eta:R \to M[1]$, then by taking fibers we get a commutative diagram with exact columns
  \[\begin{tikzcd}
	{M} & {M} \\
	{R^\eta} & R \\
	R & {R \oplus M[1]}
	\arrow[from=1-1, to=2-1]
	\arrow[from=2-1, to=3-1]
	\arrow["{(\id,\eta)}", from=3-1, to=3-2]
	\arrow["{(\id,0)}", from=2-2, to=3-2]
	\arrow["0",from=1-2, to=2-2]
	\arrow["\sim", from=1-1, to=1-2]
	\arrow[from=2-1, to=2-2].
\end{tikzcd}\]
In particular, the fiber of $f$ inherits a natural $R$-module structure, such that the multiplication
map factors as
\[ M \otimes_{R^\eta} M \to M\otimes_R M \to M\]
where the second map is induced from the 0-map $M \to R$ by applying $\blank_{R}\otimes M$.
Since any map of the form $f_1 \otimes \cdots \otimes f_{n-1} \otimes 0$ admits a $\Sigma_n$-equivariant
nullhomotopy, we deduce that all coherent multiplication maps
\[ (M^{\otimes n})_{h\Sigma_n} \to M\]
are nullhomotopic. In this sense, square zero extensions of $\E_\infty$-rings are "coherently" square
zero and admit no nontrivial power operations on the fiber.
\end{remark}

\begin{remark}\label{torsor}
  For any connective $R$-module $M$ the augmented $R$-algebra $\Omega^{\infty}M = R \oplus M$ inherits the
  structure of an $\bb{E}_{\infty}$-monoid in $\rm{CAlg}_{/R}$ with delooping given by $R \oplus M[1]$.
  Thus, we can think of $R \oplus M[1]$ as the classifying object for square zero extensions with
  fiber $M$ the ``universal'' derivation being given by the trivial section $R \to R \oplus M[1]$.
  From this perspective, a square zero extension is precisely a $R\oplus M$-torsor in $\rm{CAlg}_{/R}$.
\end{remark}

To make this definition work for us we require a way to check in practice whether a given map
of $\bb{E}_{\infty}$-rings is a square zero extension. This is provided by the following proposition, which
in particular implies that for discrete rings our notion agrees with the classical definition of a
square zero extension.

\begin{proposition}
  Let $R\p\to R$ be a map of connective $\bb{E}_{\infty}$-rings with fiber $M$ such that
  $M \in \rm{Sp}_{\geq n} \cap \rm{Sp}_{\leq 2n}$ and the multiplication map $M \otimes_{R\p}M \to M$ is nullhomotopic.
  Then $R\p\to R$ is a square zero extension.
\end{proposition}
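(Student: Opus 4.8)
The plan is to exhibit directly the data that makes $R\p \to R$ into a square zero extension, i.e.~a derivation $\eta \in \rm{Map}_{\rm{CAlg}_{/R}}(R, R \oplus M[1])$ together with an identification of $R\p$ with the pullback $R^{\eta} := R \times_{R \oplus M[1]} R$ formed from $\eta$ and the zero section $(\id,0)$. By Proposition~\ref{loops} and the theory of the cotangent complex, such derivations are the same datum as maps $L_{R} \to M[1]$ out of the absolute cotangent complex of $R$, so the whole problem reduces to producing one such map and then checking that the induced comparison map $R\p \to R^{\eta}$ over $R$ is an equivalence. The natural candidate arises by regarding $R$ as an $R\p$-algebra along $f\colon R\p \to R$ and taking the canonical derivation valued in the relative cotangent complex $L_{R/R\p}$.

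The technical heart is to understand $L_{R/R\p}$ through degree $2n+1$. First I would note that since $M$ is $n$-connective the map $f$ is surjective on $\pi_{0}$, so $L_{R/R\p}$ is $(n+1)$-connective and its low-degree truncation is computed by $R \otimes_{R\p} M$. Here both hypotheses enter cleanly: writing $R \simeq \rm{cofib}(M \to R\p)$ as $R\p$-modules (a fiber sequence is a cofiber sequence in the stable setting) and tensoring with $M$ produces a cofiber sequence $M \otimes_{R\p} M \to M \to R \otimes_{R\p} M$ whose first map is exactly the multiplication, hence nullhomotopic by assumption. A choice of nullhomotopy splits this off and yields $R \otimes_{R\p} M \simeq M \oplus (M \otimes_{R\p} M)[1]$. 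Since $M \otimes_{R\p} M$ is $2n$-connective while $M$ is $2n$-truncated, the extra summand $(M \otimes_{R\p} M)[1]$ lives in degrees $\geq 2n+1$, so that $\tau_{\leq 2n}(R \otimes_{R\p} M) \simeq M$ and correspondingly $L_{R/R\p}$ agrees with $M[1]$ in degrees $\leq 2n+1$. Projecting the canonical derivation along $L_{R/R\p} \to M[1]$ then produces the desired $\eta$.

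Finally I would form $R^{\eta} = R \times_{R \oplus M[1]} R$ and compare it with $R\p$. By construction there is a canonical map $R\p \to R^{\eta}$ over $R$, and both terms sit in fiber sequences over $R$ with fiber $M$; the comparison on fibers is an equivalence through degree $2n$, with the only possible discrepancy measured by the $2n$-connective term $M \otimes_{R\p} M$. The truncation hypothesis $M \in \rm{Sp}_{\leq 2n}$ is precisely what forces this discrepancy to vanish, so $R\p \to R^{\eta}$ is an equivalence and $R\p \to R$ is a square zero extension. The main obstacle is the cotangent complex computation of the second paragraph: pinning down $L_{R/R\p} \simeq M[1]$ in exactly the correct range and verifying that the two hypotheses---$2n$-truncatedness and nullity of $M \otimes_{R\p} M \to M$---conspire to annihilate the error terms just where they would otherwise obstruct the comparison. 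Everything else reduces to formal manipulation of the defining pullback square together with the standard connectivity estimates for the relative cotangent complex.
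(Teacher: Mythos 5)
Your argument is correct, but note that the paper does not actually prove this statement: it disposes of it by citing \cite[Theorem 7.4.1.23]{ha}, which says precisely that every $n$-small extension is a square zero extension. What you have written is essentially a reconstruction of Lurie's proof of that cited theorem: you take the canonical derivation valued in $L_{R/R\p}$, identify $\tau_{\leq 2n+1}L_{R/R\p}\simeq M[1]$ by combining the splitting $R\otimes_{R\p}M\simeq M\oplus (M\otimes_{R\p}M)[1]$ (coming from the nullhomotopy of the multiplication) with the connectivity comparison between $L_{R/R\p}$ and $\rm{cofib}(R\p\to R)\otimes_{R\p}R$, and then check that the induced map $R\p\to R^{\eta}$ is an equivalence because the discrepancy on fibers is $2n$-connective while $M$ is $2n$-truncated. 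This is the right mechanism, and both hypotheses enter exactly where you say they do. The only caveat is that your sketch silently relies on two nontrivial external inputs that are themselves the substance of Lurie's argument: the Hurewicz-type connectivity estimate for the relative cotangent complex (\cite[Theorem 7.4.3.1, Corollary 7.4.3.2]{ha}), and the identification of the comparison map on fibers with (an equivalence onto) $M$ in the relevant range. If you intend the argument to be self-contained rather than a gloss on the citation, those two points should be proved or explicitly referenced instead of asserted; as an outline, nothing is missing.
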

\begin{proof}
  This is immediate from~\cite[][Theorem 7.4.1.23.]{ha}.
\end{proof}

\begin{example}
  If $R\p \to R$ is a surjective map of ordinary commutative rings with kernel $M\subseteq R\p$,
  then $R\p \to R$ is a square zero extension if and only if $M^{2}=0$.
  In particular, for every $n$ the map $\Z/p^{n}\to \Z/p^{n-1}$ is square zero with kernel $\F_{p}$.
\end{example}

\begin{example}\label{sq0ex}
  If $R$ is any connective $\bb{E}_{\infty}$-ring, then the map $\tau_{\leq n}R \to \tau_{\leq n-1}R $ is a square zero
  extension with fiber $\pi_{n}R[n]$.
\end{example}

For a functor $X:\rm{CAlg}^{\rm{cn}} \to \cl{S}$ and a square zero extension $R^{\eta} \to R$
we want to study the fibers of the map $X(R^{\eta}) \to X(R)$, i.e.~given $A\in X(R)$ we wish
to understand the space of deformations of $A$ to an object $\widetilde{A}\in X(R^{\eta})$.
Notice that, since $R$ and $M$ are connective, the derivation
\[R \rar{(\rm{id}, \eta)} R \oplus M[1]\]
is surjective on $\pi_{0}$, indeed an isomorphism. This motivates the following definition for
a class of functors which are ``well behaved'' with regard to square zero extensions.

\begin{definition}
Let $\cl{C}$ be a category. A functor $X:\rm{CAlg}^{\rm{cn}}\to \cl{C}$ is called:
\begin{enumerate}
\item  \textit{Cohesive} if for any pullback diagram of
  connective $\bb{E}_{\infty}$-rings
  \[\begin{tikzcd}
      {R^\prime} & {S^\prime} \\
      R & S \arrow[from=1-2, to=2-2] \arrow[from=2-1, to=2-2] \arrow[from=1-1, to=2-1]
      \arrow[from=1-1, to=1-2]
    \end{tikzcd}\]
  which induces surjections $\pi_{0}R \to \pi_{0}S$ and $\pi_{0}S\p \to \pi_{0}S$, the diagram
  \[\begin{tikzcd}
	{X(R\p)} & {X(S\p)} \\
	{X(R)} & {X(S)}
	\arrow[from=1-1, to=1-2]
	\arrow[from=1-2, to=2-2]
	\arrow[from=1-1, to=2-1]
	\arrow[from=2-1, to=2-2]
\end{tikzcd}\]
is a pullback. We refer to such pullbacks of $\bb{E}_{\infty}$-rings as \textit{small pullbacks}.
   \item \textit{Nilcomplete} if for every connective $\bb{E}_{\infty}$-ring $R$ the natural map
         \[ X(R)\to \flim_{n} X(\tau_{\le n} R)\]
         is an equivalence.
  \end{enumerate}
\end{definition}

\begin{example}
  Let $R$ be a connective $\bb{E}_{\infty}$-ring, then the functor
  \[\rm{Spec}(R): \rm{CAlg}^{\rm{cn}} \to \cl{S} \qquad S \mapsto \rm{Map}_{\rm{CAlg}}(R, S) \]
  commutes with \textit{all} limits, and so in particular it is cohesive and nilcomplete.
\end{example}

\begin{construction}
  Let $X : \rm{CAlg}^{\rm{cn}} \to \cl{S}$ be a cohesive functor, $R \in \rm{CAlg}^{\rm{cn}}$
  and $A \in X(R)$. Then we define a functor
  \[ \rm{X}_{A}^{\blank} :\rm{CAlg}^{\rm{cn}}_{/R} \to \cl{S} \qquad (R\p \to R) \mapsto \rm{fib}_{A}(X(R\p)\to X(R)).\]
  We call $\rm{X}_{A}^{R\p}$ the \textit{space of deformations} of $A$ along $R\p \to R$.
\end{construction}

\begin{remark}
  If $X$ is cohesive and nilcomplete, then for every connective $\bb{E}_{\infty}$-ring $R$
  and every $A\in X(\tau_{\le 0}R)$ we have an equivalence
  \[ X_{A}^{R} \simeq \flim_{n} X_{A}^{\tau_{\le n}R},\]
  i.e.~we can construct lifts to $R$ by inductively lifting against the square zero extensions
  $\tau_{\le n}R \to \tau_{\le n-1}R$. This will be very useful for applications, however we will not need
  nilcompleteness for the theoretical groundwork that makes up the  rest of this chapter.
\end{remark}

\subsection{The tangent complex}\label{sect22}

\begin{definition}
  Let $\cl{C},\cl{D}$ be $\infty$-categories, then a functor $F: \cl{C}\to \cl{D}$ is called:
  \begin{enumerate}
          \item \textit{Reduced} if  it preserves the terminal object.
          \item \textit{Excisive} if it takes pushouts to pullbacks.
  \end{enumerate}
\end{definition}

\begin{proposition}\label{redex}
  Let $X: \rm{CAlg}^{\rm{cn}} \to \cl{S}$ be cohesive and $A \in X(R)$ be an $R$-valued point.
  Then the functor given by the composition
  \[ \rm{Mod}_{R}^{\rm{cn}}\rar{\Omega^{\infty}|_{\rm{Mod}_{R}^{\rm{cn}}}}
    \rm{CAlg}_{/R}^{\rm{cn}} \rar{\rm{X}^{\blank}_{A}} \cl{S} \]
  is reduced and excisive.
\end{proposition}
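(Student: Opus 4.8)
The plan is to treat the two properties in turn, abbreviating the composite as $\Phi\colon \rm{Mod}_R^{\rm{cn}}\to\cl{S}$, $\Phi(M)=\rm{X}_A^{R\oplus M}$. First I would check that $\Phi$ is reduced. The terminal object of $\rm{Mod}_R^{\rm{cn}}$ is the zero module, and by Proposition~\ref{loops} we have $\Omega^\infty(0)=R$ equipped with the identity augmentation; hence $\Phi(0)=\rm{fib}_A(\id_{X(R)})\simeq\pt$. I would also record that $\Phi$ refines to a functor valued in \emph{pointed} spaces: the unit section $R\to R\oplus M$ splits the augmentation inside $\rm{CAlg}_{/R}^{\rm{cn}}$ and therefore induces a natural basepoint $\pt=\Phi(0)\to\Phi(M)$. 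This is what makes the loop-space manipulations below meaningful.

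The real content is excisiveness. The tempting approach is to take a pushout square of connective modules, note that it is simultaneously a pullback square in the stable category $\rm{Mod}_R$, apply the limit-preserving right adjoint $\Omega^\infty$ to obtain a pullback square of connective $\bb{E}_\infty$-rings, and invoke cohesion. The main obstacle is that cohesion does \emph{not} apply to this square: its sink corner is $R\oplus P$ with $P=N\sqcup_M N'$, and the two maps into it need not be surjective on $\pi_0$ (already for $M=0$ the map $N\to N\oplus N'$ fails this). Overcoming this is the crux of the proof.

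The fix is a one-step suspension. I would first treat the suspension squares, with $N=N'=0$ and $P=M[1]$: applying $\Omega^\infty$ gives the pullback
\[\begin{tikzcd}
	{R\oplus M} & R \\
	R & {R\oplus M[1]}
	\arrow[from=1-1, to=1-2]
	\arrow[from=1-2, to=2-2]
	\arrow[from=1-1, to=2-1]
	\arrow[from=2-1, to=2-2]
\end{tikzcd}\]
whose sink $R\oplus M[1]$ satisfies $\pi_0(R\oplus M[1])=\pi_0 R$ because $M[1]$ is $1$-connective. The two maps into it are then $\pi_0$-isomorphisms, so cohesion applies and yields a natural equivalence $\Phi(M)\simeq\Omega\Phi(M[1])$.

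Finally, for an arbitrary pushout square $\sigma$ of connective modules I would suspend it to $\sigma[1]$. Since suspension preserves colimits, $\sigma[1]$ is again a pushout, hence a pullback in $\rm{Mod}_R$; crucially, all four corners of $\sigma[1]$ are now $1$-connective, so after applying $\Omega^\infty$ every corner has $\pi_0=\pi_0 R$ and every structure map is a $\pi_0$-isomorphism. Cohesion therefore \emph{does} apply to $\Omega^\infty(\sigma[1])$ and shows that $\Phi(\sigma[1])$ is a pullback of spaces. Delooping through the natural equivalence $\Phi(-)\simeq\Omega\Phi(-[1])$ and using that $\Omega$ preserves pullbacks, I conclude that $\Phi(\sigma)\simeq\Omega\Phi(\sigma[1])$ is a pullback as well, which is exactly excisiveness. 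The two points requiring care are the naturality of the delooping equivalence in $M$ (so that it may be applied to a whole square rather than one object at a time) and the claim that $\Omega^\infty$ carries the module pullback to an honest pullback of connective rings; both are immediate from Proposition~\ref{loops} and the fact that $\Omega^\infty$, as a right adjoint, preserves limits.
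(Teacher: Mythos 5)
Your proof is correct and takes essentially the same route as the paper: reducedness is immediate, and the heart of both arguments is the observation that $\Omega^{\infty}$ sends the suspension square of $M$ to a small pullback of rings, so that cohesion yields the natural delooping $\rm{X}_A^{R\oplus M}\simeq\Omega\rm{X}_A^{R\oplus M[1]}$. Where the paper then concludes excisiveness by citing~\cite[][Proposition 1.4.2.13]{ha}, you inline that step by suspending an arbitrary pushout square so that all corners become $1$-connective and cohesion applies directly, then delooping; this is a correct, self-contained unpacking of the cited lemma.
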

\begin{proof}
  Clearly, we have $\rm{X}^{R}_{A} \simeq \pt$, so the functor is reduced.
  Since $\rm{X}$ is cohesive and taking fibers commutes with limits,
  the functor $\rm{X}^{\blank}_{A}$ takes small pullbacks to pullbacks.
  Hence, the claim follows from~\cite[][Proposition 1.4.2.13]{ha} by observing that
  $\Omega^{\infty}|_{\rm{Mod}_{R}^{\rm{cn}}}$ sends
  \[\begin{tikzcd}
	M & 0 \\
	0 & {M[1]}
	\arrow[from=1-1, to=1-2]
	\arrow[from=1-2, to=2-2]
	\arrow[from=1-1, to=2-1]
	\arrow[from=2-1, to=2-2]
\end{tikzcd}\]
to the small pullback
\[\begin{tikzcd}
	{R\oplus M} & R \\
	R & {R\oplus M[1]}
	\arrow[from=1-1, to=1-2]
	\arrow["{(\id,0)}", from=1-2, to=2-2]
	\arrow[from=1-1, to=2-1]
	\arrow["{(\id,0)}"', from=2-1, to=2-2],
\end{tikzcd}\]
i.e.~we have $\Omega\rm{X}^{R \oplus M[1]}_{A} \simeq \rm{X}_{A}^{R \oplus M}$ for any $M \in \rm{Mod}_{R}^{\rm{cn}}$.
\end{proof}

\begin{construction}\label{tangent}
  Let $X:\rm{CAlg}^{\rm{cn}} \to \cl{S}$ be cohesive and $A \in X(R)$.
  Then by~\cite[][Proposition 1.4.2.22]{ha} we obtain an essentially unique factorization
  \[\begin{tikzcd}
	& {\rm{Sp}} \\
	{\rm{Mod}_R^{\rm{cn}}} & {\cl{S}}
	\arrow["{X_{A}^{R \oplus \blank}}"', from=2-1, to=2-2]
	\arrow["{\Omega^\infty}", from=1-2, to=2-2]
	\arrow["{T_{X_A}^{\blank}}", dashed, from=2-1, to=1-2]
\end{tikzcd}\]
  where for $M\in \rm{Mod}_{R}^{\rm{cn}}$ the  spectrum $T_{X_{A}}^{M}$ is given by
  the sequence of spaces $\{\rm{X}_{A}^{R \oplus M[n]}\}_{n}$.
 For $M=R$ we call $T^{R}_{X_{A}} =: T_{X_{A}}$ the \textit{tangent complex} of $A$.
\end{construction}

\begin{warning}
  The name tangent complex is a historical convention and somewhat misleading.
  In general, the spectrum $T_{X_{A}}$ is not contained in the full subcategory
  $D(\Z) \subseteq \rm{Sp}$ i.e.~cannot be modeled by a chain complex of abelian groups.
\end{warning}

\begin{lemma}\label{acn}
  For a connective $\bb{E}_{\infty}$-ring $R$ denote by $\rm{Mod}_{R}^{\rm{acn}} \subseteq \rm{Mod}_{R}$ the full
  subcategory spanned by those $R$-modules which are contained in $(\rm{Mod}_{R})_{\geq n}$ for some $n$ and let
  $F: \rm{Mod}_{R}^{\rm{cn}} \to \cl{S}$ be an excisive functor. Then $F$ admits an extension to an excisive
  functor $\rm{Mod}_{R}^{\rm{acn}} \to \cl{S}$ which is unique up to contractible choice.
\end{lemma}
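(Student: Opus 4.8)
The plan is to present $\rm{Mod}_{R}^{\rm{acn}}$ as a sequential colimit of copies of $\rm{Mod}_{R}^{\rm{cn}}$ and to show that, after passing to excisive functors, the relevant transition maps become invertible. First I would note that $\rm{Mod}_{R}^{\rm{acn}}=\bigcup_{n\geq 0}(\rm{Mod}_{R})_{\geq -n}$ is the filtered union of the subcategories of $(-n)$-connective modules, and that the shift equivalences $\Sigma^{n}:(\rm{Mod}_{R})_{\geq -n}\rar{\sim}\rm{Mod}_{R}^{\rm{cn}}$ convert each inclusion $(\rm{Mod}_{R})_{\geq -n}\hookrightarrow(\rm{Mod}_{R})_{\geq -(n+1)}$ into the suspension $\Sigma:\rm{Mod}_{R}^{\rm{cn}}\to\rm{Mod}_{R}^{\rm{cn}}$. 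This exhibits
\[\rm{Mod}_{R}^{\rm{acn}}\simeq\colim\left(\rm{Mod}_{R}^{\rm{cn}}\rar{\Sigma}\rm{Mod}_{R}^{\rm{cn}}\rar{\Sigma}\cdots\right)\]
in $\rm{Cat}_{\infty}$, with $\rm{Mod}_{R}^{\rm{cn}}$ included as the zeroth stage.

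Since $\rm{Fun}(\blank,\cl{S})$ sends colimits of $\infty$-categories to limits, I obtain $\rm{Fun}(\rm{Mod}_{R}^{\rm{acn}},\cl{S})\simeq\lim_{n}\rm{Fun}(\rm{Mod}_{R}^{\rm{cn}},\cl{S})$ with transition maps $\blank\circ\Sigma$, under which restriction to $\rm{Mod}_{R}^{\rm{cn}}$ is the projection onto the zeroth term. To cut this down to excisive functors I would argue that a pushout square of almost connective modules already lies in a single stage $(\rm{Mod}_{R})_{\geq -n}$: if the three given modules are $(-n)$-connective then so is the pushout, since the cofiber of a map of $(-n)$-connective modules is again $(-n)$-connective. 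Hence a functor out of the colimit is excisive precisely when each of its restrictions to the stages is, and the previous equivalence restricts to $\rm{Fun}^{\rm{exc}}(\rm{Mod}_{R}^{\rm{acn}},\cl{S})\simeq\lim_{n}\rm{Fun}^{\rm{exc}}(\rm{Mod}_{R}^{\rm{cn}},\cl{S})$, still with transition maps $\blank\circ\Sigma$.

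The crux is to show that $\blank\circ\Sigma$ is an equivalence on $\rm{Fun}^{\rm{exc}}(\rm{Mod}_{R}^{\rm{cn}},\cl{S})$. Applying an excisive functor $H$ to the defining pushout square of the suspension produces a natural pullback square, i.e.~a natural equivalence $H(M)\simeq H(0)\times_{H(\Sigma M)}H(0)$. I would encode this by the functor $P(H):=\underline{H(0)}\times_{H}\underline{H(0)}$, the pullback in $\rm{Fun}(\rm{Mod}_{R}^{\rm{cn}},\cl{S})$ of the constant functor $\underline{H(0)}$ against $H$ along the maps induced by $0\to(\blank)$. As a finite limit of excisive functors $P(H)$ is again excisive, and the natural equivalence above yields $P(H)\circ\Sigma\simeq H$ as well as $P(H\circ\Sigma)\simeq H$ for excisive $H$, so $P$ is a two-sided inverse to $\blank\circ\Sigma$. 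It follows that every transition map in the tower is an equivalence, whence the projection onto the zeroth term is an equivalence
\[\rm{Fun}^{\rm{exc}}(\rm{Mod}_{R}^{\rm{acn}},\cl{S})\rar{\sim}\rm{Fun}^{\rm{exc}}(\rm{Mod}_{R}^{\rm{cn}},\cl{S}).\]
Since this projection is exactly restriction, the $\infty$-category of excisive extensions of a fixed $F$ is contractible, which is the asserted existence and uniqueness up to contractible choice.

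The step I expect to be most delicate is the last one: upgrading the objectwise equivalence $H(M)\simeq H(0)\times_{H(\Sigma M)}H(0)$ into an honest natural equivalence of functors and verifying that $P$ assembles into a functor inverse to $\blank\circ\Sigma$ \emph{coherently}, rather than merely on objects. The stagewise detection of excisiveness used to pass to $\rm{Fun}^{\rm{exc}}$ is the other point deserving a careful, if short, justification.
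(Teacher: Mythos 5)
Your argument is correct. Note, however, that the paper does not actually prove this lemma: its ``proof'' is the single line ``This is [Lurie, DAG XIV, Lemma 1.3.2]'', so you have supplied a self-contained argument where the paper defers to a reference --- and the argument you give is essentially the standard one (and, to the best of my knowledge, the same strategy as in the cited lemma): exhibit $\rm{Mod}_{R}^{\rm{acn}}$ as the sequential colimit of shifted copies of $\rm{Mod}_{R}^{\rm{cn}}$ along $\Sigma$, pass to functor categories, and check that precomposition with $\Sigma$ is invertible on excisive functors. All the individual steps hold up: the union is the colimit in $\rm{Cat}_{\infty}$ because the inclusions are fully faithful; each stage $(\rm{Mod}_{R})_{\geq -n}$ is closed under pushouts computed in $\rm{Mod}_{R}$ (the cofiber of a map of $(-n)$-connective modules is $(-n)$-connective), which is exactly what makes excisiveness detectable stagewise; and your functor $P(H)=\underline{H(0)}\times_{H}\underline{H(0)}$ is built from the whiskering of the natural transformation $c_{0}\to \id$ by $H$ and a limit in the functor category, hence is functorial in $H$, with the unit/counit comparisons $H\to P(H)\circ\Sigma$ and $H \to P(H\circ\Sigma)$ induced by applying $H$ to the natural suspension square --- so the coherence issue you flag at the end is genuinely unproblematic. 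The one small point worth making explicit is that the two legs of the cospan defining $P(H)(\Sigma M)$ are both the map induced by $0\to\Sigma M$, whereas excision produces the pullback over the two structure maps of the pushout square $0\leftarrow M\rightarrow 0$; these agree because $0$ is a zero object of $\rm{Mod}_{R}^{\rm{cn}}$, so the two structure maps are canonically homotopic. With that remark added, the proof is complete, and it has the mild advantage over the paper's citation of making visible that only excisiveness (not reducedness) of $F$ is used.
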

\begin{proof}
  This is~\cite[][Lemma 1.3.2]{dag14}.
\end{proof}

\begin{proposition}\label{structure}
  Let $R$ be a connective $\bb{E}_{\infty}$-ring, $X: \rm{CAlg}^{\rm{cn}} \to \cl{S}$ be cohesive and
  $A \in X(R)$. Then $T_{X_{A}}$ inherits a natural
  $R$-module structure such that for any perfect connective $R$-module $M$ we have
  a natural equivalence $T^{M}_{X_{A}}\simeq T_{X_{A}}\otimes_{R} M$.
\end{proposition}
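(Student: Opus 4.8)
The plan is to study the spectrum-valued functor $T^{\blank}_{X_A}\colon \rm{Mod}_R^{\rm{cn}} \to \rm{Sp}$ produced in Construction~\ref{tangent}, establishing in turn that (i) it preserves finite colimits, (ii) its value $T_{X_A} = T^R_{X_A}$ acquires a canonical $R$-module structure, and (iii) the two functors $M \mapsto T^M_{X_A}$ and $M \mapsto T_{X_A}\otimes_R M$ become identified on perfect connective modules. For step (i): by Proposition~\ref{redex} the functor $X_A^{R\oplus\blank}$ is reduced and excisive, and by Construction~\ref{tangent} it factors as $\Omega^{\infty}\circ T^{\blank}_{X_A}$. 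Since a reduced excisive functor valued in the stable $\infty$-category $\rm{Sp}$ carries pushout squares to pullback squares, which in $\rm{Sp}$ are again pushouts, and sends the zero module to $0$, the functor $T^{\blank}_{X_A}$ preserves finite colimits; in particular it carries cofiber sequences to cofiber sequences and commutes with the suspension $M \mapsto M[1]$.

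For step (ii) I would first apply Lemma~\ref{acn} to the excisive functor $X_A^{R\oplus\blank}$ and pass to the spectrum-valued refinement, extending $T^{\blank}_{X_A}$ to $\rm{Mod}_R^{\rm{acn}}$ so that $T^{M[n]}_{X_A} \simeq T^M_{X_A}[n]$ holds for all $n \in \Z$. Taking $M = R$, the maps on mapping spaces induced by $T^{\blank}_{X_A}$,
\[\Omega^{\infty}(R[n]) \simeq \rm{Map}_{\rm{Mod}_R^{\rm{acn}}}(R, R[n]) \to \rm{Map}_{\rm{Sp}}(T_{X_A}, T_{X_A}[n]) \simeq \Omega^{\infty}\!\left(\rm{map}_{\rm{Sp}}(T_{X_A}, T_{X_A})[n]\right),\]
assemble compatibly over $n$ (using the identifications above) into a map of spectra $R \to \rm{map}_{\rm{Sp}}(T_{X_A}, T_{X_A})$. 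Functoriality of $T^{\blank}_{X_A}$ renders this a map of $\bb{E}_1$-rings, and since $R$ is $\bb{E}_{\infty}$ and the construction is compatible with multiplication on $R$, it endows $T_{X_A}$ with an $R$-module structure, manifestly natural in $A$.

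For step (iii), the module structure supplies, via the analogous mapping-spectrum map $M \simeq \rm{map}_R(R, M) \to \rm{map}_{\rm{Sp}}(T_{X_A}, T^M_{X_A})$ together with the tensor–hom adjunction in $\rm{Sp}$, a natural map $T_{X_A}\otimes M \to T^M_{X_A}$; the $R$-linearity of the construction guarantees it factors through a natural transformation $\alpha_M\colon T_{X_A}\otimes_R M \to T^M_{X_A}$. Both $M \mapsto T_{X_A}\otimes_R M$ and $M \mapsto T^M_{X_A}$ preserve finite colimits, and $\alpha_R$ is the identity on $T_{X_A}$. Since every perfect connective $R$-module is built from $R$ by finite colimits and retracts, and all three of $\alpha$ and the two functors respect these operations, $\alpha_M$ is an equivalence for every perfect connective $M$, which is the claim.

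The hard part will be step (ii): upgrading the evident space-level functoriality of $T^{\blank}_{X_A}$ to a genuine map of spectra $R \to \rm{map}_{\rm{Sp}}(T_{X_A}, T_{X_A})$, i.e.~to an honest $R$-module structure rather than merely an action of the homotopy ring $\pi_{\pt}R$. This is exactly where the extension to almost connective modules via Lemma~\ref{acn} and the compatibility $T^{M[n]}_{X_A}\simeq T^M_{X_A}[n]$ with shifts are indispensable, since without them the delooping data needed to coherently assemble the ring map are unavailable.
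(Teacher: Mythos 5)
Your overall architecture matches the paper's through the first half: both proofs extend $X_{A}^{R\oplus\blank}$ to an excisive functor on $\rm{Mod}_{R}^{\rm{acn}}$ via Lemma~\ref{acn}, observe that an excisive functor out of a stable category is exact, and thereby reduce the tensor formula to a thick-subcategory argument over the perfect modules. The divergence, and the gap, is in your step (ii). You propose to build the $R$-module structure by assembling the maps $\Omega^{\infty}(R[n]) \to \rm{Map}_{\rm{Sp}}(T_{X_{A}}, T_{X_{A}}[n])$ ``compatibly over $n$'' into a map of spectra $R \to \rm{map}_{\rm{Sp}}(T_{X_{A}}, T_{X_{A}})$, and then to assert that functoriality renders this a map of $\bb{E}_{1}$-rings. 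Neither assembly can be performed by hand from levelwise data: producing a map of spectra from a sequence of maps of spaces, and upgrading it to an $\bb{E}_{1}$-ring map, each requires an infinite hierarchy of coherence homotopies that your argument never produces. You correctly flag this as ``the hard part,'' but flagging it is not the same as resolving it, and as written the $R$-module structure --- the first assertion of the proposition --- is not actually constructed.

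The paper avoids this entirely with one additional input you are missing: after lifting the restriction to $\rm{Mod}_{R}^{\rm{perf}}$ to an exact spectrum-valued functor $\widetilde{F}: \rm{Mod}_{R}^{\rm{perf}} \to \rm{Sp}$ and Ind-extending to a colimit-preserving functor $\rm{Mod}_{R} \to \rm{Sp}$, it invokes the equivalence $\rm{Fun}^{\rm{L}}(\rm{Mod}_{R}, \rm{Sp}) \simeq \rm{Mod}_{R}$, $G \mapsto G(R)$. This single Morita-theoretic statement hands you both conclusions at once: $T_{X_{A}} = \widetilde{F}(R)$ acquires its $R$-module structure because every colimit-preserving functor $\rm{Mod}_{R} \to \rm{Sp}$ is of the form $N \otimes_{R} \blank$, and the identification $T^{M}_{X_{A}} \simeq T_{X_{A}} \otimes_{R} M$ for perfect connective $M$ is then just the statement that the Ind-extension agrees with the original functor there. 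Your step (iii) is in effect re-deriving half of this equivalence by cell induction, which would be fine once the module structure exists, but the module structure is exactly what you have not supplied. Replacing your hand-assembly in step (ii) with the appeal to $\rm{Fun}^{\rm{L}}(\rm{Mod}_{R},\rm{Sp})\simeq \rm{Mod}_{R}$ closes the gap and collapses steps (ii) and (iii) into one.
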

\begin{proof}
  By Lemma~\ref{acn} we can extend the functor $F=X^{R \oplus \blank}_{X_{A}}: \rm{Mod}_{R}^{\rm{cn}}\to \cl{S}$
  uniquely to an excisive functor $F\p:\rm{Mod}_{R}^{\rm{acn}}\to \cl{S}$. Since $\rm{Mod}_{R}^{\rm{acn}}$
  is stable, $F\p$ is an exact functor. Thus, the restriction $F\p |_{\rm{Mod}_{R}^{\rm{perf}}}$ is also
  exact. Hence, since $\rm{Sp}$ is stable,~\cite[][Proposition 1.4.2.22]{ha} implies that we
  get an essentially unique lift to an exact functor
  $\widetilde{F}: \rm{Mod}_{R}^{\rm{perf}} \to \rm{Sp}$. Finally, applying~\cite[][Proposition 5.5.1.9]{htt}
  we see that $\widetilde{F}$ induces a colimit preserving functor
  $\rm{Ind}(\rm{Mod}_{R}^{\rm{perf}}) \simeq \rm{Mod}_{R}\to \rm{Sp}$, which under the equivalence
  \[ \rm{Fun}^{\rm{L}}(\rm{Mod}_{R}, \rm{Sp})\simeq \rm{Mod}_{R} \qquad G\mapsto G(R)\]
  yields a $R$-module whose underlying spectrum is given by $T_{X_{A}}$.
\end{proof}

\begin{proposition}\label{def}
  Let $X: \rm{CAlg}^{\rm{cn}} \to \cl{S}$ be a cohesive functor and $R^{\eta} \to R$ a square zero extension
  classified by a derivation $R \rar{\eta} M[1]$. Then for each $A \in X(R)$ the space of deformations
  $\rm{X}_{A}^{R^{\eta}}$ is either empty or a torsor under the grouplike $\E_{\infty}$-monoid $\Omega^{\infty}T^{M}_{X_{A}}$.
  Moreover, $\eta$ determines an obstruction class in $\pi_{-1}T^{M}_{X_{A}}$, which vanishes if and only
  if $\rm{X}_{A}^{R^{\eta}}$ is non-empty.
\end{proposition}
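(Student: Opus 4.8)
The plan is to reduce the computation of $\rm{X}_A^{R^{\eta}}$ to a path space inside the fiber $\rm{X}_A^{R \oplus M[1]}$, whose infinite loop space structure is provided by Construction~\ref{tangent}. First I would feed the pullback square defining the square zero extension $R^{\eta} \to R$ into the cohesiveness of $X$. The two classifying maps $(\id,0)$ and $(\id,\eta)$ from $R$ into $R \oplus M[1]$ are sections of the projection $\pi : R \oplus M[1] \to R$ and hence induce isomorphisms on $\pi_0$, so the square is small and $X$ sends it to a pullback of spaces
\[\begin{tikzcd}
	{X(R^{\eta})} & {X(R)} \\
	{X(R)} & {X(R \oplus M[1])}
	\arrow[from=1-1, to=1-2]
	\arrow["{s_0}", from=1-2, to=2-2]
	\arrow[from=1-1, to=2-1]
	\arrow["{s_\eta}"', from=2-1, to=2-2]
\end{tikzcd}\]
with $s_0 = X((\id,0))$ and $s_\eta = X((\id,\eta))$. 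Taking the fiber of the left vertical map over $A$ and using the pullback property, I obtain $\rm{X}_A^{R^{\eta}} \simeq \rm{fib}_{s_\eta(A)}(s_0)$, the fiber of the zero-section map over the point $s_\eta(A) \in X(R \oplus M[1])$ singled out by the derivation $\eta$.

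Next I would identify this fiber with a path space in $F_A := \rm{X}_A^{R \oplus M[1]} = \rm{fib}_A(X(\pi))$. Since $\pi \circ (\id,0) = \pi \circ (\id,\eta) = \id_R$, both $s_0$ and $s_\eta$ are sections of $X(\pi)$, so $s_0(A)$ and $s_\eta(A)$ both lie in $F_A$. A point of $\rm{fib}_{s_\eta(A)}(s_0)$ consists of a point $b \in X(R)$ together with a path from $s_\eta(A)$ to $s_0(b)$; applying $X(\pi)$ to this path produces a path from $A$ to $b$ in $X(R)$. This yields a map from $\rm{fib}_{s_\eta(A)}(s_0)$ to the based path space of $X(R)$ at $A$, and since the latter is contractible the source is equivalent to the fiber over the constant path. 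That fiber consists exactly of the paths whose projection is constant, i.e.~those lying entirely in $F_A$, so
\[ \rm{X}_A^{R^{\eta}} \simeq \rm{Path}_{F_A}(s_\eta(A), s_0(A)),\]
the space of paths in $F_A$ from $s_\eta(A)$ to the basepoint $s_0(A)$.

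Finally I would read off the two assertions from this description. By Construction~\ref{tangent} the fiber $F_A = \rm{X}_A^{R \oplus M[1]} = \Omega^{\infty-1}T^M_{X_A}$ is an infinite loop space, and unwinding the reduced structure of $\rm{X}_A^{R \oplus \blank}$ from Proposition~\ref{redex} shows that its identity element is the trivial deformation $s_0(A)$. The path space is nonempty precisely when $s_\eta(A)$ lies in the identity component of $F_A$, that is when its class vanishes in $\pi_0 F_A = \pi_{-1}T^M_{X_A}$; this class is the obstruction determined by $\eta$. When the path space is nonempty, concatenation of loops at $s_0(A)$ exhibits it as a torsor under $\Omega_{s_0(A)}F_A$, and Proposition~\ref{redex} identifies $\Omega_{s_0(A)}F_A \simeq \rm{X}_A^{R \oplus M} = \Omega^{\infty}T^M_{X_A}$ together with its grouplike $\E_{\infty}$-structure. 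The main obstacle I anticipate lies in the second step: carefully justifying the passage to the path space in the fiber and, above all, matching the basepoint $s_0(A)$ with the zero element of the infinite loop space structure on $F_A$. Only once this identification is pinned down can one be sure that the obstruction lives in $\pi_{-1}T^M_{X_A}$ and that the torsor is genuinely under $\Omega^{\infty}T^M_{X_A}$ as a grouplike $\E_{\infty}$-monoid, rather than merely under the underlying loop space of $F_A$.
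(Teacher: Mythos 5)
Your proposal is correct and follows essentially the same route as the paper: apply cohesiveness to the defining pullback square, identify $\rm{X}_A^{R^{\eta}}$ with the space of paths in $\rm{X}_A^{R\oplus M[1]}\simeq \Omega^{\infty-1}T^M_{X_A}$ between the points determined by the zero section and by $\eta$, and read off the obstruction class in $\pi_{-1}T^M_{X_A}$ and the torsor structure under $\Omega \rm{X}_A^{R\oplus M[1]}\simeq \rm{X}_A^{R\oplus M}\simeq\Omega^\infty T^M_{X_A}$. The basepoint issue you flag is already settled by Construction~\ref{tangent} and Proposition~\ref{redex}, since the spectrum $T^M_{X_A}$ is built from the spaces $\rm{X}_A^{R\oplus M[n]}$ pointed precisely by the trivial deformations $A^0$.
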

\begin{proof}
  Since $X$ is cohesive, applying $\rm{X}_{A}^{\blank}$ to the pullback diagram defining $R^{\eta}$
\[\begin{tikzcd}
	{R^\eta} & R \\
	R & {R\oplus M[1]}
	\arrow[from=1-1, to=1-2]
	\arrow["0", from=1-2, to=2-2]
	\arrow[from=1-1, to=2-1]
	\arrow["{(0,\eta)}"', from=2-1, to=2-2],
\end{tikzcd}\]
we get a pullback of spaces
\[\begin{tikzcd}
	{\rm{X}_A^{R^\eta}} & \pt \\
	\pt & {\rm{X}_A^{R \oplus M[1]}}
	\arrow[from=1-1, to=1-2]
	\arrow["A^{0}",from=1-2, to=2-2]
	\arrow[from=1-1, to=2-1]
	\arrow["A^{\eta}"',from=2-1, to=2-2],
\end{tikzcd}\]
exhibiting $\rm{X}_{A}^{R^{\eta}}$ as the space of paths in $\rm{X}^{R \oplus M[1]}_{A}$ between the points $A^{0}$
and $A^{\eta}$. Hence, it is non-empty if and only if the homotopy class determined by the map
\[ \pt \rar{A^{\eta}} \rm{X}_{A}^{R\oplus M[1]} \simeq \Omega^{\infty}T_{X_{A}}^{M}[1]\]
vanishes. Moreover, in this case $\rm{X}_{A}^{R^{\eta}}$ is a torsor under the loop space based at $A^{0}$,
which is given by
\[ \Omega \rm{X}_{A}^{R\oplus M[1]}\simeq \rm{X}_{A}^{R \oplus M} \simeq \Omega^{\infty}T^{M}_{X_{A}}.\]
\end{proof}

\begin{proposition}\label{bc}
  Let $X: \rm{CAlg}^{\rm{cn}} \to \cl{S}$ be cohesive and $R \to R\p$ a map of connective $\bb{E}_{\infty}$-rings.
  Moreover, let $A \in X(R)$ be a $R$-valued point and denote by $A\p$ the image of $A$
  under the induced map $X(R)\to X(R\p)$.
  Then for every $M \in \rm{Mod}^{\rm{cn}}_{R\p}$ we have a natural map
  \[ T_{X_{A}}^{M} \to T_{X_{A\p}}^{M}\]
  which is an equivalence if the map $\pi_{0}R \to \pi_{0}R\p$ is surjective.
\end{proposition}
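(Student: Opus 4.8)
The plan is to build the comparison map levelwise out of the functoriality of split square zero extensions, and then to recognize it, under the surjectivity hypothesis, as the map on fibers in a pullback square supplied by cohesiveness. Throughout I regard $M \in \rm{Mod}_{R\p}^{\rm{cn}}$ as an $R$-module by restriction along $f\colon R \to R\p$, so that both $T_{X_A}^{M}$ and $T_{X_{A\p}}^{M}$ are defined as in Construction~\ref{tangent}. The functoriality of the split square zero extension then produces, for every $n \geq 0$, a commutative square of connective $\bb{E}_{\infty}$-rings
\[\begin{tikzcd}
	{R\oplus M[n]} & {R\p \oplus M[n]} \\
	R & {R\p}
	\arrow[from=1-1, to=1-2]
	\arrow[from=1-1, to=2-1]
	\arrow[from=1-2, to=2-2]
	\arrow["f", from=2-1, to=2-2]
\end{tikzcd}\]
whose top map is $f$ on the ring summand and the identity on $M[n]$ and whose vertical maps are the augmentations. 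Applying $X$, passing to fibers of the vertical maps over $A$ (resp. its image $A\p$), and letting $n$ vary yields a map $X_A^{R\oplus M[n]} \to X_{A\p}^{R\p \oplus M[n]}$ natural in $n$, hence a natural map of spectra $T_{X_A}^{M} \to T_{X_{A\p}}^{M}$.

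Next, assuming $\pi_0 R \to \pi_0 R\p$ is surjective, I would show each level is an equivalence. The central claim is that the displayed square is a pullback in $\rm{CAlg}^{\rm{cn}}$. Since the forgetful functor to modules commutes with limits and is conservative (Proposition~\ref{calg}), it suffices to identify the underlying pullback of spectra; there the pullback of $f\colon R \to R\p$ along the augmentation $R\p \oplus M[n] \to R\p$ is readily seen to be $R\oplus M[n]$, as the augmentation has fiber $M[n]$ and the evident section identifies the comparison map. Connectivity of the corner $R\oplus M[n]$ guarantees this is also a pullback in the connective subcategory.

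I would then verify this is a \emph{small} pullback in the sense of the cohesiveness axiom: the augmentation $R\p \oplus M[n] \to R\p$ is split, hence surjective on $\pi_0$, and the remaining requirement is exactly the hypothesis that $\pi_0 R \to \pi_0 R\p$ be surjective. Cohesiveness of $X$ therefore turns the square into a pullback of spaces
\[\begin{tikzcd}
	{X(R\oplus M[n])} & {X(R\p \oplus M[n])} \\
	{X(R)} & {X(R\p)}
	\arrow[from=1-1, to=1-2]
	\arrow[from=1-1, to=2-1]
	\arrow[from=1-2, to=2-2]
	\arrow[from=2-1, to=2-2]
\end{tikzcd}\]
Since the fibers of the two vertical maps of a pullback square of spaces, taken over a point and its image, agree, the induced map $X_A^{R\oplus M[n]} \to X_{A\p}^{R\p \oplus M[n]}$ is an equivalence for every $n$. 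As the homotopy groups of a spectrum are computed from $\Omega^{\infty - n}$ for $n$ large, a levelwise equivalence of the defining sequences is an equivalence of spectra, so $T_{X_A}^{M} \to T_{X_{A\p}}^{M}$ is an equivalence.

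The main obstacle is the bookkeeping in the first step: making the comparison genuinely natural in $n$ (and in $M$) so that it assembles into an honest map of spectra rather than a merely compatible family on homotopy groups, for which the clean route is to exhibit the whole family as $X$ applied to a map of the functorial split-square-zero diagrams. The only other point requiring care is the identification of the ring square as a pullback, which reduces to an underlying computation via Proposition~\ref{calg}; everything after that is a formal consequence of cohesiveness together with the elementary behaviour of fibers in a pullback square.
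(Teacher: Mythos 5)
Your argument is correct and is essentially the paper's proof: both construct the comparison map by applying $X$ to the square formed by the split square zero extensions of $R$ and $R\p$ and taking fibers, and both deduce the equivalence from cohesiveness once that square is recognized as a small pullback (the paper asserts this directly, while you verify it on underlying modules and check the $\pi_0$-surjectivity conditions explicitly). The only cosmetic difference is that you assemble the map of spectra from the levelwise spaces $X_A^{R\oplus M[n]}$, whereas the paper invokes naturality in $M$ of the reduced excisive functors from Construction~\ref{tangent}; these amount to the same thing.
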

\begin{proof}
  Applying $X$ to the pullback of connective $\bb{E}_{\infty}$-rings
\[\begin{tikzcd}
	{R\oplus M} & {R\p \oplus M} \\
	R & R\p
	\arrow[from=2-1, to=2-2]
	\arrow[from=1-2, to=2-2]
	\arrow[from=1-1, to=2-1]
	\arrow[from=1-1, to=1-2]
\end{tikzcd}\]
and taking the fibers over the points $A\in X(R)$ and $A\p\in X(R\p)$ gives a commutative diagram
\[\begin{tikzcd}
	{X^{R\oplus M}_A} & {X_{A\p}^{R\p\oplus M}} \\
	{X(R\oplus M)} & {X(R\p \oplus M)} \\
	{X(R)} & {X(R\p)}
	\arrow[from=3-1, to=3-2]
	\arrow[from=2-2, to=3-2]
	\arrow[from=2-1, to=3-1]
	\arrow[from=2-1, to=2-2]
	\arrow[from=1-1, to=2-1]
	\arrow[from=1-2, to=2-2]
	\arrow[from=1-1, to=1-2]
\end{tikzcd}.\]
The map $X_{A}^{R\oplus M} \to X_{A\p}^{R\p \oplus M}$ is natural in $M$ and thus gives a map of spectra
$T^{M}_{X_{A}}\to T^{M}_{X_{A\p}}$ as claimed. Moreover, if $R \to R\p$ is surjective on $\pi_{0}$, then
the pullback of $\bb{E}_{\infty}$-rings is small. Hence, since $X$ is cohesive, the map $X_{A}^{R\oplus M}\to X_{A\p}^{R\p\oplus M}$
is an equivalence and thus the induced map $T^{M}_{X_{A}}\to T^{M}_{X_{A\p}}$ is as well.
\end{proof}
Notice that, if $R^{\eta} \to R$ is a square zero extension of connective $\bb{E}_{\infty}$-rings
the map $\pi_{0}R^{\eta} \to \pi_{0}R$ is necessarily surjective. Thus, if we are given $A\in X(R)$
and a lift $A^{\eta}\in X(R^{\eta})$, we know that if we have $M\in \rm{Mod}^{\rm{cn}}_{R^{\eta}}$ such that the
$R^{\eta}$-action factors through $R$, then $T^{M}_{X_{A^{\eta}}}$ agrees with $T^{M}_{X_{A}}$. The following
Proposition shows that we can compute the value of $T^{\blank}_{X_{A^{\eta}}}$ on arbitrary connective
$R^{\eta}$-modules in terms of $T^{\blank}_{X_{A}}$.

\begin{proposition}\label{cofib}
  Let $X:\rm{CAlg}^{\rm{cn}} \to \cl{S}$ be cohesive, $R\in \rm{CAlg}^{\rm{cn}}$ and $A\p \in X(R)$. Let
  $R^{\eta} \to R$ be a square zero extension with fiber $M$ such that $A\p$ admits a lift $A\in X(R^{\eta})$.
  Then for any $N\in \rm{Mod}_{R^{\eta}}^{\rm{cn}}$ if we have that
  $T^{M \otimes_{R}(R \otimes_{R^{\eta}}N)}_{X_{A\p}} \simeq T^{N \otimes_{R^{\eta}}R}_{X_{A\p}} \simeq 0$ it follows that
  $T^{N}_{X_{A}} \simeq 0$.
\end{proposition}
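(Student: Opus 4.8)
The plan is to produce a fiber sequence of tangent complexes that reduces the computation of $T^{N}_{X_{A}}$ to the two vanishing hypotheses, the key input being that the fiber $M$ of the square zero extension is naturally an $R$-module.

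First I would record the algebraic fiber sequence coming from the extension. Writing $\bar{N} := N \otimes_{R^{\eta}}R$ for the base change, the cofiber sequence $M \to R^{\eta} \to R$ of $R^{\eta}$-modules established in the Remark following the definition of square zero extensions, tensored with $N$ over $R^{\eta}$, yields a cofiber sequence of \emph{connective} $R^{\eta}$-modules
\[ M \otimes_{R^{\eta}} N \to N \to \bar{N}. \]
Since $M$ carries a natural $R$-module structure, i.e.~its $R^{\eta}$-action factors through $R$, the projection formula identifies the left-hand term as $M \otimes_{R^{\eta}} N \simeq M \otimes_{R}(R \otimes_{R^{\eta}}N)$, which is precisely the module appearing in the hypothesis. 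In particular both outer terms are restricted along $R^{\eta}\to R$ from $R$-modules.

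Next I would linearize using the tangent complex. By Proposition~\ref{redex} the functor $N \mapsto \rm{X}_{A}^{R^{\eta}\oplus N}$ is reduced and excisive, so the displayed cofiber sequence, viewed as a pushout square in $\rm{Mod}_{R^{\eta}}^{\rm{cn}}$ with a zero corner, is carried to a pullback square of spaces; assembling these levelwise via Construction~\ref{tangent} (equivalently, extending to almost connective modules by Lemma~\ref{acn} and using that a reduced excisive functor between stable $\infty$-categories is exact) gives a fiber sequence of spectra
\[ T^{M \otimes_{R^{\eta}}N}_{X_{A}} \to T^{N}_{X_{A}} \to T^{\bar N}_{X_{A}}. \]
It therefore suffices to show that the two outer terms vanish. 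Here I would invoke base change invariance: because $R^{\eta}\to R$ is a square zero extension the map $\pi_{0}R^{\eta}\to \pi_{0}R$ is surjective, so for any $R^{\eta}$-module whose action factors through $R$ Proposition~\ref{bc} gives an equivalence $T^{(\blank)}_{X_{A}} \simeq T^{(\blank)}_{X_{A\p}}$. Applying this to the two $R$-modules above and using the hypotheses yields
\[ T^{\bar N}_{X_{A}} \simeq T^{N\otimes_{R^{\eta}}R}_{X_{A\p}} \simeq 0, \]
\[ T^{M\otimes_{R^{\eta}}N}_{X_{A}} \simeq T^{M \otimes_{R}(R \otimes_{R^{\eta}}N)}_{X_{A\p}} \simeq 0, \]
so the middle term $T^{N}_{X_{A}}$ vanishes.

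The step I expect to require the most care is the passage from the pushout square of \emph{connective} modules to an honest fiber sequence of \emph{spectra} $T^{(\blank)}_{X_{A}}$: one must check that excisiveness together with the delooping of Construction~\ref{tangent} legitimately linearizes the sequence, and keep track of connectivity so that the relevant cofiber is computed in $\rm{Mod}_{R^{\eta}}^{\rm{cn}}$. The only other point needing attention is the projection-formula identification of $M \otimes_{R^{\eta}}N$, where it is essential that $M$ is restricted from an $R$-module so that Proposition~\ref{bc} applies to both outer terms.
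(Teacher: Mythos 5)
Your proposal is correct and follows essentially the same route as the paper's proof: tensor the fiber sequence $M \to R^{\eta} \to R$ with $N$, identify $M \otimes_{R^{\eta}} N \simeq M \otimes_{R}(R \otimes_{R^{\eta}}N)$ using that the $R^{\eta}$-action on $M$ factors through $R$, apply excisiveness of $T^{\blank}_{X_{A}}$ to obtain the fiber sequence of spectra, and conclude via the base change equivalence of Proposition~\ref{bc}. The extra care you flag about linearizing the connective cofiber sequence is reasonable but the paper treats it exactly as you do.
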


\begin{proof}
 Applying the functor $\blank \otimes_{R^{\eta}} N$ to the extension
  \[ M \to R^{\eta} \to R\]
  yields a cofiber sequence
  \[ M \otimes_{R^{\eta}} N \to N \to N \otimes_{R^{\eta}} R\]
  of connective $R^{\eta}$-modules. Now the functor $T^{\blank}_{X_{A}}: \rm{Mod}_{R^{\eta}}^{\rm{cn}} \to \rm{Sp}$
  is excisive, hence we get a fiber sequence of spectra
  \[T^{M\otimes_{R^{\eta}}N}_{X_{A}} \to T^{N}_{X_{A}}\to T^{N \otimes_{R^{\eta}}R}_{X_{A}}.\]
  Since the extension $R^{\eta} \to R$ is square zero,
  the action of $R^{\eta}$ on $M$ factors through $R$, i.e.~we have that
  \[ M \otimes_{R^{\eta}}N \simeq (M \otimes_{R}R) \otimes_{R^{\eta}} N 
  \simeq M \otimes_{R} (R \otimes_{R^{\eta}} N).\]
  Applying Proposition~\ref{bc} we see that
  \[ T^{M \otimes_{R}(R \otimes_{R^{\eta}}N)}_{X_{A}}\simeq  T^{M \otimes_{R}(R \otimes_{R^{\eta}}N)}_{X_{A\p}} \simeq 0\]
  and similarly
  \[ T^{N \otimes_{R^{\eta}}R}_{X_{A}}\simeq T^{N \otimes_{R^{\eta}}R}_{X_{A\p}} \simeq 0,\]
  which proves the claim.
\end{proof}

\begin{remark}
  Note that in the setting of Proposition~\ref{cofib}, if $M$ and $N \otimes_{R^{\eta}}R$ are perfect $R$-modules,
  Proposition~\ref{structure} implies that it suffices to assume that $T_{X_{A\p}} \simeq 0$.
  Moreover, as part of the proof we have seen that every connective $R^{\eta}$-Module $N$ sits
  in a cofiber sequence
  \[ M \otimes_{R} (R \otimes_{R^{\eta}} N) \to N \to R \otimes_{R^{\eta}} N.\]
  If we think of $N$ as a lift of $R \otimes_{R^{\eta}} N$ along the square zero extension $R^{\eta}\to R$,
  this is part of a description of the deformation theory of connective modules. The complete
  description may be deduced from Proposition~\ref{Mod}.
\end{remark}



As an example we now summarize the well-known deformation theory of $\bb{E}_\infty$-rings in our language.
Later we will use this description to analyze the deformation theory of dualizable coalgebras.

\begin{example}~\label{spec}
  Let $R$ be any $\bb{E}_{\infty}$-ring.
  The composition
  \[ \rm{Mod}_{R}\rar{\Omega^{\infty}} \rm{CAlg}_{/R} \rar{\rm{Map}(R, \blank)} \cl{S}\]
  is accessible and commutes with limits. Thus, since $\rm{Mod}_{R}$ is presentable, the adjoint functor
  theorem implies that it is corepresented by an $R$-Module $L_{R}$ called the \textit{cotangent complex}
  of $R$. Now the functor
  \[ X=\rm{Spec}(R): \rm{CAlg}^{\rm{cn}}\to \cl{S} \qquad S \mapsto \rm{Map}_{\rm{CAlg}}(R, S)\]
  is clearly cohesive.
  Moreover, for any $(\varphi:R \to S)\in \rm{Spec}(R)(S)$ and $M \in \rm{Mod}_{S}^{\rm{cn}}$ we get that
  \begin{align*}
    &\rm{fib}_{\varphi}(\rm{Map}_{\rm{CAlg}}(R, S \oplus M) \to \rm{Map}_{\rm{CAlg}}(R,S))\\
    \simeq ~ &\rm{Map}_{\rm{CAlg}_{/S}}(R, S \oplus M)\\
    \simeq ~ &\rm{Map}_{\rm{CAlg}_{/R}}(R, R \oplus \varphi_{\pt}M)\\
    \simeq  ~ &\rm{Map}_{R}(L_{R}, \varphi_{\pt}M)\\
    \simeq ~ &\rm{Map}_{S}(\varphi^{\pt}L_{R}, M).
  \end{align*}
  Hence, for each $M$ we have an equivalence
  \[ \rm{map}_{S}(\varphi^{\pt}L_{R}, M) \simeq T_{X_{\varphi}}^{M}.\]
Explicitly, this tells us that
  the space of lifts in the diagram
  \[\begin{tikzcd}
	&&& {S\oplus M} \\
	{} && R & S
	\arrow["\varphi", from=2-3, to=2-4]
	\arrow[from=1-4, to=2-4]
	\arrow[dashed, from=2-3, to=1-4]
\end{tikzcd}\]
is naturally identified with
$\Omega^{\infty}T_{X_{\varphi}}^{M}=\rm{Map}_{S}(\varphi^{\pt}L_{R}, M)$. Moreover, if $L_{R} \simeq 0$, then $R$ admits
unique lifts against \textit{arbitrary} square zero extensions. In the case $S=R$ and $\varphi=\id$
$\rm{Map}_{\rm{CAlg}_{/R}}(R, R \oplus M) = \rm{Map}_{R}(L_{R}, M)$ is also called the space of
\textit{derivations} $R\to M$, which in the discrete case can be explicitly described via
additive maps satisfying the Leibniz rule. Although the existence of a cotangent complex
for coalgebras remains unclear, we will show that there is a coalgebraic notion
of derivations which play a similar role in the deformation theory.
\end{example}

\begin{example}\label{counterex}
  Proposition~\ref{Mod} implies that the functor
  \[ X:\rm{CAlg}^{\rm{cn}} \to \cl{S} \qquad R \mapsto (\rm{CAlg}_{R}^{\rm{cn}})^{\Delta^{0}}\]
  is cohesive. Moreover, it follows from~\cite[][Proposition 7.4.2.5]{ha} that for every $S \in X(R)$
  there exists an $S$-module $L_{S/R}$ called the \textit{relative cotangent complex}, together with,
  for every connective $S$-module $M$ a natural equivalence
  \[ \rm{Map}_{S}(L_{S/R}, M[1] \otimes_R S) \rar{\sim} \rm{fib}_{S}(X(R\oplus M) \to X(R)).\]
  Thus, the problem of lifting $S$ to a $R\oplus M$-algebra $\widetilde{R}$ is equivalent to
  finding a map of $R$-algebras fitting into the diagram
  \[\begin{tikzcd}
	& {S \oplus (S \otimes_R M[1])} \\
	S & S
	\arrow["\id", from=2-1, to=2-2]
	\arrow[from=1-2, to=2-2]
	\arrow[dashed, from=2-1, to=1-2].
\end{tikzcd}\]
Meaning we get an equivalence
\[ \rm{map}_{S}(L_{S/R}, M[1]\otimes_{R} S) \simeq T^{M}_{X_{S}}\]
\end{example}

\section{Deformation theory of coalgebras}
We now apply the machinery reviewed in the previous section to study deformation theoretic
questions about coalgebras in the category of spectra. We first prove that the functors which
assign to a connective $\E_\infty$-ring $R$ the categories of connective $R$-modules and $p$-complete
connective $R$-modules respectively, are cohesive an nilcomplete and that this implies the same for
coalgebras in those categories.
We then introduce \textit{formally \'etale coalgebras} in an arbitrary cohesive moduli problem
\[\cC_\blank: \CAlg^{\cn} \to \CAlg(\rm{Pr^L})\]
and show that the space of lifts of a formally
\'etale coalgebra $A\in \cCAlg(\cC_R)$ along any square zero extension $R^\eta \to R$ is contractible.
Moreover, we show that the assignment of $A$ to its essentially unique
lift $A^{\eta} \in \rm{cCAlg}(\cC_{R^{\eta}})$ refines to a fully faithful
functor $\rm{cCAlg}(\cC_R)^{\rm{f\acute{e}t}} \to \rm{cCAlg}(\cC_{R^{\eta}})$.

\subsection{Moduli of coalgebras}

We now prove that moduli of coalgebras are cohesive and nilcomplete,
and hence we can use the tangent complex machinery discussed in Section~\ref{sect22}. We deduce
this from the fact that the categories of connective modules over a connective $\bb{E}_\infty$-ring
commute with the corresponding limits and the equivalences are strong monoidal.

\begin{lemma}\label{edescent}
Let $\cl{C}_{\blank}: \rm{CAlg}^{\rm{cn}} \to \CAlg(\rm{Pr^L})$
be cohesive or nilcomplete. Then the functor
$\rm{cCAlg}(\cl{C}_\blank): \rm{CAlg}^{\rm{cn}} \to \rm{Pr^L}$ is
is also cohesive or nilcomplete, respectively.
\end{lemma}
\begin{proof}
Clear, since by Proposition~\ref{calg} and Lemma~\ref{limits} the functor
$\cCAlg(\blank)$ commutes with limits.
\end{proof}

\begin{theorem}[Lurie]
  Suppose we have a pullback of connective $\bb{E}_{\infty}$-rings
  \[\begin{tikzcd}
      {R^\prime} & {S^\prime} \\
      R & S \arrow[from=1-1, to=1-2] \arrow[from=1-2, to=2-2]
      \arrow[from=1-1, to=2-1] \arrow[from=2-1, to=2-2]
    \end{tikzcd}\]
  such that one of the maps $\pi_{0}R \to \pi_{0}S$, $\pi_{0}S\p \to \pi_{0}S$ is surjective.
  Then the natural map
  \[ \rm{Mod}^{\rm{cn}}_{R\p} \to \rm{Mod}_{R}^{\rm{cn}} \times_{\rm{Mod}_{S}^{\rm{cn}}} \rm{Mod}_{S\p}^{\rm{cn}}\]
  is an equivalences of categories with inverse taking a point in the pullback $(M,N,h)$, consisting of
  $M \in\rm{Mod}_{R}^{\rm{cn}}, N \in \rm{Mod}_{S\p}^{\rm{cn}}$ and a homotopy $h: M\otimes_{R}S\simeq N \otimes_{S\p}S$, to
  $M \times_{M \otimes_{R}S} N$ with the induced $R\p$-module structure.
\end{theorem}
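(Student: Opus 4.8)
The plan is to exhibit the two functors as an adjoint pair and then check, separately, that the unit and the counit are equivalences, with the surjectivity hypothesis entering only to control connectivity. Write $\cl{P}=\rm{Mod}_{R}^{\rm{cn}}\times_{\rm{Mod}_{S}^{\rm{cn}}}\rm{Mod}_{S\p}^{\rm{cn}}$ for the target and let $\Phi:\rm{Mod}_{R\p}^{\rm{cn}}\to\cl{P}$ be the comparison functor, sending $P$ to the triple $(R\otimes_{R\p}P,\, S\p\otimes_{R\p}P,\, \rm{can})$; it exists by the universal property of the pullback together with the base change functors along the maps out of $R\p$. Since each base change preserves colimits and colimits in $\cl{P}$ are computed componentwise, $\Phi$ preserves colimits, so the adjoint functor theorem furnishes a right adjoint $G$. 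Unwinding the base change/restriction adjunctions identifies $G$ on objects with the formula from the statement, namely $G(M,N,h)\simeq M\times_{M\otimes_{R}S}N$ computed in $\rm{Mod}_{R\p}$ after restriction of scalars; this is already the candidate inverse, so it remains to see that $\Phi\dashv G$ is an adjoint equivalence.

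First I would show $\Phi$ is fully faithful by verifying the unit $P\to G\Phi(P)$ is an equivalence. The square being cartesian is equivalent to the Mayer--Vietoris fiber sequence $R\p\to R\oplus S\p\to S$ of $R\p$-modules, where the second map is the difference of the two structure maps. As $\blank\otimes_{R\p}P$ is exact, tensoring this sequence with $P$ yields a fiber sequence $P\to (R\otimes_{R\p}P)\oplus(S\p\otimes_{R\p}P)\to S\otimes_{R\p}P$, which is precisely the statement that $P\simeq G\Phi(P)$ via the unit. This step uses neither connectivity nor surjectivity. The surjectivity hypothesis enters now, to guarantee that $G$ restricts to the connective subcategories: the object $G(M,N,h)$ sits in a fiber sequence $G(M,N,h)\to M\oplus N\to M\otimes_{R}S$, and since one of $\pi_{0}R\to\pi_{0}S$, $\pi_{0}S\p\to\pi_{0}S$ is surjective, the corresponding summand already surjects onto $\pi_{0}(M\otimes_{R}S)\cong\pi_{0}M\otimes_{\pi_{0}R}\pi_{0}S\cong\pi_{0}N\otimes_{\pi_{0}S\p}\pi_{0}S$, so $\pi_{0}(M\oplus N)\to\pi_{0}(M\otimes_{R}S)$ is surjective and the fiber is connective. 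Hence $\Phi\dashv G$ restricts to an adjunction between the connective categories in which $\Phi$ is fully faithful.

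It then remains to prove essential surjectivity, equivalently that the counit $\Phi G\to\id$ is an equivalence; this is the main obstacle. The difficulty is that $R\otimes_{R\p}(\blank)$ does not preserve the fiber product defining $G(M,N,h)$ for a general commutative square---indeed $R\otimes_{R\p}M\not\simeq M$ when $M$ is restricted from $R$---so the counit cannot be checked by a naive base change computation, and it is exactly here that the connectivity of the fiber $K=\rm{fib}(R\p\to R)\simeq\rm{fib}(S\p\to S)$ (which the surjectivity hypothesis provides) must be used. The approach I would take is to observe that the essential image of the fully faithful, colimit-preserving $\Phi$ is closed under colimits and contains the free triples $\Phi(R\p^{\oplus I})\simeq(R^{\oplus I},S\p^{\oplus I},\rm{can})$, and then to show that these generate $\cl{P}$ under colimits by resolving an arbitrary $(M,N,h)$ by a simplicial object of free triples via the bar construction of the adjunction; the connectivity of $K$ ensures both that such resolutions can be taken within connective modules and that the resulting geometric realizations converge.

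An alternative, and perhaps more transparent, route to essential surjectivity is a Postnikov-tower induction: filtering $(M,N,h)$ by its truncations reduces the claim to modules over the discrete rings $\pi_{0}R\p,\ldots,\pi_{0}S$, where it becomes the classical Milnor patching theorem for a cartesian square of ordinary rings one of whose legs is surjective. Either way, the technical heart---and the step I expect to be most delicate---is controlling the base change of the fiber product, where the connectivity of $K$ is precisely what makes the correction terms cancel so that $R\otimes_{R\p}G(M,N,h)\simeq M$ and $S\p\otimes_{R\p}G(M,N,h)\simeq N$. Finally, naturality of all the constructions shows that the quasi-inverse is the functor stated in the theorem.
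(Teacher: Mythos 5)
First, a point of comparison: the paper does not prove this statement at all --- it is imported wholesale from Lurie, and the ``proof'' in the text is the single citation to \emph{Spectral Algebraic Geometry}, Theorem 16.2.0.2 --- so there is no internal argument to measure yours against, and your sketch has to stand on its own. Its first half does: setting up $\Phi\dashv G$ via the adjoint functor theorem, identifying $G(M,N,h)\simeq M\times_{M\otimes_{R}S}N$ from the description of mapping spaces in a pullback of $\infty$-categories, deducing full faithfulness of $\Phi$ by tensoring the Mayer--Vietoris fiber sequence $R^{\prime}\to R\oplus S^{\prime}\to S$ with $P$ (correctly noting no hypotheses are needed there), and using the surjectivity of one of $\pi_{0}R\to\pi_{0}S$, $\pi_{0}S^{\prime}\to\pi_{0}S$ to see that $\pi_{0}(M\oplus N)\to\pi_{0}(M\otimes_{R}S)$ is surjective, so that $G$ preserves connectivity. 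All of this is right.

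The gap is essential surjectivity, i.e.\ the assertion that the counit maps $R\otimes_{R^{\prime}}G(M,N,h)\to M$ and $S^{\prime}\otimes_{R^{\prime}}G(M,N,h)\to N$ are equivalences. You correctly flag this as the technical heart, but you do not prove it: you name two strategies and carry out neither, and the second is actively problematic. For the first (generation by free triples plus a bar resolution), both the generation statement --- that every $(M,N,h)$ receives a $\pi_{0}$-surjective map from some $\Phi(R^{\prime\oplus I})$, which already requires lifting generators across $\pi_{0}N\to\pi_{0}(N\otimes_{S^{\prime}}S)$ and knowing $\pi_{0}G(M,N,h)\to\pi_{0}M$ is surjective --- and the convergence of the resulting resolution are genuine connectivity estimates that you assert rather than establish; these estimates are essentially equivalent in difficulty to the theorem itself. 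The Postnikov/Milnor route does not work as described: the gluing datum $h$ does not pass to truncations because $\otimes$ does not commute with $\tau_{\leq n}$, the truncated square of rings is not the square governing the truncated modules, and even over discrete rings the claim concerns arbitrary connective (derived) modules, not the projective modules of classical Milnor patching, so there is no honest reduction to a classical theorem. A cleaner way to close the gap, given what you have already shown, is to observe that full faithfulness of $\Phi$ plus the triangle identity forces $G(\epsilon_{(M,N,h)})$ to be an equivalence, so it suffices to prove that $G$ is conservative on the connective pullback category; that conservativity is exactly where the surjectivity hypothesis does its real work, and it still requires the kind of connectivity bootstrap on the fibers of the counit that your sketch defers. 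As written, the proposal establishes full faithfulness but not the equivalence.
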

\begin{proof}
\cite[][Theorem 16.2.0.2.]{sag}
\end{proof}

\begin{corollary}\label{Mod}
    The functor
    \[ \CAlg^{\rm{cn}} \to \CAlg(\rm{Pr^L}) \quad R \mapsto \Mod_R^{\rm{cn}} \]
    is cohesive.
\end{corollary}

This shows that moduli of coalgebras are cohesive in the sense of Definition~\ref{cohesive}.
To prove that they are also nilcomplete we need the following technical Lemma.

\begin{lemma}\label{conn}
  Let $\dots \to  E_{2} \to E_{1} \to E_{0}$ be a diagram of spectra. Suppose we are given $L \ge 0$ such
  that for all $\ell\p\ge\ell \ge L$ the map $E_{\ell\p}\to E_{\ell}$ is $m$-connective. Then for any $\ell \ge L$ the map
  \[ \flim_{n} E_{n}\to E_{\ell}\]
  is $m-1$-connective.
\end{lemma}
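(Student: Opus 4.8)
The plan is to identify the fiber of the projection $p_{\ell}\colon\flim_{n}E_{n}\to E_{\ell}$ with the limit of a tower of highly connected spectra, and then to bound the connectivity of that limit using the Milnor $\lim^{1}$ sequence. Fix $\ell\ge L$. Since $\{n\ge\ell\}$ is cofinal in $\{n\ge 0\}$ we have $\flim_{n}E_{n}\simeq\flim_{n\ge\ell}E_{n}$, and for $n\ge\ell$ the composite transition maps $E_{n}\to E_{\ell}$ assemble into a map from the tower $\{E_{n}\}_{n\ge\ell}$ to the constant tower $\underline{E_{\ell}}$ with value $E_{\ell}$. Writing $K_{n}=\rm{fib}(E_{n}\to E_{\ell})$ and using that a sequential limit of spectra commutes with fibers (both being limits), the levelwise fiber sequences assemble into a fiber sequence $\flim_{n}K_{n}\to\flim_{n}E_{n}\to E_{\ell}$, since $\flim_{n}\underline{E_{\ell}}\simeq E_{\ell}$. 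Hence $\rm{fib}(p_{\ell})\simeq\flim_{n}K_{n}$.

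Next I would record that each $K_{n}$ is highly connected: for $n\ge\ell\ge L$ the map $E_{n}\to E_{\ell}$ is $m$-connective by hypothesis, so its fiber $K_{n}$ is $m$-connective, i.e.\ $\pi_{j}K_{n}=0$ for all $j<m$. It therefore remains to show that the limit of a tower of $m$-connective spectra is $(m-1)$-connective, for which I would invoke the Milnor exact sequence, which for each $i$ reads
\[ 0\to\lim^{1}_{n}\pi_{i+1}K_{n}\to\pi_{i}\bigl(\flim_{n}K_{n}\bigr)\to\lim_{n}\pi_{i}K_{n}\to 0. \]
For $i\le m-2$ both outer terms vanish: the term $\lim_{n}\pi_{i}K_{n}$ vanishes because $\pi_{i}K_{n}=0$ already at each level (as $i<m$), and the term $\lim^{1}_{n}\pi_{i+1}K_{n}$ vanishes because $\pi_{i+1}K_{n}=0$ at each level (as $i+1\le m-1<m$). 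Thus $\pi_{i}\bigl(\flim_{n}K_{n}\bigr)=0$ for all $i\le m-2$, so $\rm{fib}(p_{\ell})$ is $(m-1)$-connective and therefore $p_{\ell}$ is $(m-1)$-connective, as claimed.

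The only genuinely delicate point is the $\lim^{1}$ contribution, and it is precisely what forces the loss of one degree: at $i=m-1$ the sequence retains the term $\lim^{1}_{n}\pi_{m}K_{n}$, over which this crude argument has no control, so it cannot be pushed to full $m$-connectivity. (In fact, using that the maps $\pi_{m}E_{n}\to\pi_{m}E_{\ell}$ are surjective, hence the relevant towers are Mittag--Leffler and the offending $\lim^{1}$ vanishes, one can check that $p_{\ell}$ is even $m$-connective; but the $(m-1)$-connective statement is all that is needed later and follows with no Mittag--Leffler input.) Beyond this, the remaining care is purely formal, namely justifying the Milnor sequence and the commutation of the sequential limit with fibers in the stable setting, both of which are standard.
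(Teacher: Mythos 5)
Your proof is correct and follows essentially the same route as the paper: both identify $\rm{fib}(\flim_{n}E_{n}\to E_{\ell})$ with the limit of the tower of $m$-connective fibers $K_{n}=\rm{fib}(E_{n}\to E_{\ell})$ and then lose one degree of connectivity in passing to the sequential limit. The paper phrases the last step at the spectrum level, writing $\flim_{n}K_{n}$ as the fiber of a map of products of $m$-connective spectra, while you use the Milnor $\lim^{1}$ sequence — which is just the long exact homotopy sequence of that same fiber sequence — so the two arguments coincide in substance.
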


\begin{proof}
  Writing $F_{\ell\p,\ell}= \rm{fib}(E_{\ell\p}\to E_{\ell})$ and $F_{\ell}= \rm{fib}(\lim_{n}E_{n}\to E_{\ell})$ we want
  to show that $F_{\ell}$ is $m$-connective. Indeed, since limits are exact, we have that
  \[F_{\ell} \simeq \lim_{\ell\p >\ell}F_{\ell\p,\ell} \simeq \rm{fib}\left( \prod_{\ell\p >\ell} F_{\ell\p,\ell}\to \prod_{\ell\p>\ell} F_{\ell\p -1, \ell}\right ).\]
  Thus, since $\rm{Sp}_{\geq m}$ is closed under products and the fiber of a map of $m$-connective spectra
  is $(m-1)$-connective, the claim follows.
\end{proof}

\begin{proposition}\label{nilmod}
The functor
\[ \CAlg^{\rm{cn}} \to \CAlg(\rm{Pr^L}) \quad R \mapsto \rm{Mod}_R^{\rm{cn}}\]
is nilcomplete.
\end{proposition}
\begin{proof}
  Let $R$ be a connective $\bb{E}_{\infty}$-ring. We need to show that the functor
  \[\rm{Mod}_{R}^{\rm{cn}} \to \flim_{n} \rm{Mod}_{\tau_{\le n}R}^{\rm{cn}} \quad M \mapsto M \otimes_{R} \tau_{\le n}R\]
  is an equivalence of categories.
  Write $R_{n}:= \tau_{\leq n}R$. The functor admits a right adjoint which
  takes $(M_{n})\in \flim_{n} \rm{Mod}_{R_{n}}^{\rm{cn}}$ to the limit $\lim_{n} M_{n}$ which inherits
  a natural action by $\lim_{n} R_{n}\simeq R$. Now let $N\in \rm{Mod}_{R}$. Since taking limits is exact,
  the counit of the adjunction sits in a fiber sequence
  \[\lim_{n} \rm{fib}(N \to N \otimes_{R} R_{n}) \to N \rar{\eta} \lim_{n} (N \otimes_{R} R_{n}).\]
  where we compute for the left hand term that
  \[ \rm{fib}(N \to N \otimes_{R} R_{n}) \simeq \rm{fib}(N \otimes_{R} R \to N \otimes_{R} R_{n}) \simeq N \otimes_{R} \rm{fib}(R \to R_{n}).\]
  Now, since $R_{n}= \tau_{\leq n} R$, the connectivity of $\rm{fib}(R\to R_{n})$ increases with $n$. Hence,
  since $R$ and $N$ are connective, so does the connectivity of the tensor product
  $N \otimes_{R} \rm{fib}(R \to R_{n})$ which implies that $\flim_{n}N \otimes_{R} \rm{fib}(R \to R_{n}) \simeq 0$. Thus,
  the counit $N \to \lim_{n}(N \otimes_{R} R_{n})$ is an equivalence. \\
  Now let $(M_{n}) \in \flim_{n}\rm{Mod}_{R_{n}}^{\rm{cn}}$ and write $M= \lim_{n} M_{n}$.
  We need to show that the natural map
  \[ \eps_{k}:M \otimes_{R} R_{k} \to R_{k}\]
  is an equivalence for each $k$. We do this by showing that it is $m$-connective for any $m\ge 0$.
  Indeed, for any such $m$ there exists an integer $L$ such that for all
  $\ell \geq \ell\p > L$ the natural map $R_{\ell} \to R_{\ell\p}$ is $m$-connective. Since
  $M_{\ell\p}\simeq M_{\ell}\otimes_{R_{\ell}} R_{\ell\p}$  we have a fiber sequence
  \[ M_{\ell}\otimes_{R_{\ell}} (\rm{fib}(R_{\ell}\to R_{\ell\p})) \to M_{\ell} \to M_{\ell\p}.\]
  Hence, since $\rm{fib}(R_{\ell}\to R_{\ell\p})$ is $m$-connective and $R_{\ell}$ and $M_{\ell}$ are connective, the tensor
  product $M_{\ell}\otimes_{R_{\ell}} \rm{fib}(R_{\ell}\to R_{\ell\p})$ is $m$-connective as well. Thus, for fixed $m$ and $k$ we
  may apply Lemma~\ref{conn} to obtain  $\ell>k$ such that the maps $M \to M_{\ell}$ and $R\to R_{\ell}$ are
  both $m$-connective. Finally, the map
  \[ \eps_{k}: M\otimes_{R}R_{k} \to M_{\ell}\otimes_{R_{\ell}}R_{k} \simeq M_{k}\]
  is given by the colimit of the induced map between the bar resolutions
\[\begin{tikzcd}
	\vdots & \vdots \\
	{M\otimes R \otimes R_k} & {M_\ell\otimes R_\ell \otimes R_k} \\
	{M \otimes R_k} & {M _\ell \otimes R_k}
	\arrow[from=1-1, to=2-1]
	\arrow[from=1-2, to=2-2]
	\arrow[shift left=2, from=2-1, to=3-1]
	\arrow[shift left=2, from=2-2, to=3-2]
	\arrow[from=3-1, to=3-2]
	\arrow[from=2-1, to=2-2]
	\arrow[shift right=3, from=1-1, to=2-1]
	\arrow[shift left=3, from=1-1, to=2-1]
	\arrow[shift right=3, from=1-2, to=2-2]
	\arrow[shift left=3, from=1-2, to=2-2]
	\arrow[shift right=2, from=2-1, to=3-1]
	\arrow[shift right=2, from=2-2, to=3-2]
\end{tikzcd}.\]
Denote by $F_{n}$ the fiber of the map $M \otimes R^{\otimes n}\otimes R_{k} \to M_{\ell}\otimes R_{\ell}^{\otimes n}\otimes R_{k}$. Since the tensor product
of $m$-connective maps is again $m$-connective, the fiber $F_{n}$ is $m$-connective. Thus, by exactness
of colimits, we obtain a fiber sequence
\[\colim F_{n} \to M \otimes_{R} R_{k} \rar{\eps_{k}} M_{k}\]
and finally, since taking colimits preserves connectivity, this shows that the map
$\eps_{k}$ is $m$-connective. Since $m$ was arbitrary, the map $\eps_{k}$ is in fact an equivalence
which completes the proof.
\end{proof}

\begin{corollary}\label{cohesive}
The functor $\cCAlg(\Mod_{\blank}^{\rm{cn}}):\CAlg^{\rm{cn}} \to \rm{Pr^L}$ is cohesive and nilcomplete.
\end{corollary}

\begin{proof}
Apply Lemma~\ref{edescent} to Proposition~\ref{nilmod} and Corollary~\ref{Mod}.
\end{proof}

\subsection{$p$-complete moduli}

Throughout this section fix a prime $p$.
Let $R$ be an $\bb{E}_{\infty}$-ring. Recall that a module $M \in \rm{Mod}_{R}$ is called
$p$-\textit{complete} if the limit
\[ \lim \left(\dots \rar{\cdot p} M \rar{\cdot p}M \right)\]
vanishes. We denote the full subcategory spanned by the $p$-complete modules 
by $\rm{Mod}_{R}^{\wedge}$.
The inclusion $\rm{Mod}_{R}^{\wedge} \to \rm{Mod_{R}}$ admits a left adjoint which 
takes a module $M$ to its \textit{$p$-completion} given by the limit
\[ M^\wedge_p:=\lim \left( \dots \to M/p^{2} \to M/p \right).\]
In fact, $M$ is $p$-complete if and only if the natural map $M \to \lim M/p^{n}$ is an equivalence.
The spectrum $M^\wedge_p$ inherits a natural $R^{\wedge}_{p}$-module structure,
and $p$-completion induces an equivalence of categories
\[\rm{Mod}_{R}^{\wedge} \simeq \rm{Mod}_{R^{\wedge}_{p}}^{\wedge}.\]
The tensor product of $p$-complete modules is in general not $p$-complete. However, the
category $(\rm{Mod}_{R})_{p}^{\wedge}$ admits a presentably symmetric monoidal structure
given by the formula
 \[ M \otimes_{(\rm{Mod}_{R})_{p}^{\wedge}} N := ( M \otimes N )^{\wedge}_{p}.\]
 With this monoidal structure the $p$-completion functor $\rm{Mod}_{R}\to (\rm{Mod}_{R})_{p}^{\wedge}$
 is monoidal, while the inclusion is only lax monoidal. 

\begin{lemma}
    Let $R_{\blank}:I \to \CAlg^{\rm{cn}}$ be a diagram of connective $\bb{E}_\infty$ rings with limit $R$, such
    that the natural functor
    \[ F:\Mod_R \to \flim_{i\in I} \Mod_{R_i}\]
    is an equivalence of categories. Then the induced functor
    \[ F^\wedge:\Mod_R^\wedge \to \flim_{i \in I} \Mod_{R_i}^\wedge\]
    is also an equivalence.
\end{lemma}
\begin{proof}
    This follows since, for a point $(M_i) \in \flim_i \Mod_{R_i}$, the underlying spectrum
    of $F^{-1}(M)$ is computed as the limit of spectra $\lim_i M \otimes R_i$ and $p$-completion
    commutes with limits and is monoidal.
\end{proof}

 \begin{corollary}\label{pnil}
 The functor 
 \[ \CAlg^{\rm{cn}} \to \rm{CAlg}(\rm{Pr^L}) \quad R \mapsto \Mod_R^{\cn,\wedge}\]
 is cohesive and nilcomplete.
 \end{corollary}

\begin{proposition}\label{pcomp}
 Let $k$ be a perfect $\F_p$-algebra and denote by $W_0(k)$ the $p$-typical Witt-vectors of $k$. 
 The functor
\[ \rm{Mod}_{W_0(k)}^\wedge \to \lim_n \rm{Mod}_{W_0(k)/p^n} \quad N \mapsto N 
\otimes_{W_0(k)} W_0(k)/p^n \]
is a strong monoidal equivalence.
\end{proposition}
\begin{proof}
  Let us first reduce to the case $k= \F_p$ and $W_0(R)= \Z_p$.
  Indeed, suppose we have proven that case, then we have equivalences
  \begin{align*}
      \rm{Mod}_{W_0(k)}^\wedge &\simeq \rm{Mod}_{W_0(k)}(\rm{Mod}_{\Z_p}^\wedge)\\
      &\simeq \rm{Mod}_{W_0(k)}(\lim_n \rm{Mod}_{\Z/p^n})\\
      &\simeq \lim_n \rm{Mod}_{W_0(k)/p^n}(\rm{Mod}_{\Z/p^n})\\
      &\simeq \lim_n \rm{Mod}_{W_0(k)/p^n},
  \end{align*}
  and so we are done. Thus, we assume $k= \F_p$ in the following.
  The functor admits a right adjoint which takes $(M_{n})\in \flim_{n}\rm{Mod}_{\Z/p^{n}}$ to the limit
  $\lim_{n}M_{n}$ taken in the category of $\Z_{p}$-modules. Since $p$-complete modules are closed under
  limits, the essential image of this functor is contained in $\rm{Mod}_{\Z_{p}}^{\wedge}$. Moreover,
  if $M\in \rm{Mod}_{\Z_{p}}^{\wedge}$, then we have that
  \[ \flim_{n}(M \otimes_{\Z_{p}} \Z/p^{n}) \simeq \flim_{n} M/p^{n} \simeq M^{\wedge}_{p}\simeq M.\]
  Hence, the counit of the adjunction is an equivalence on $p$-complete modules.
  Conversely, given $(N_{k})\in \flim_{k}\rm{Mod}_{\Z/p^{k}}$ write $N= \lim_{k}N$. We want
  to show that, for every $n$ the natural map
  \[ N \otimes_{\Z_{p}} \Z/p^{n}\rar{\sim}N_{n}\]
  is an equivalence. Since $N \otimes_{\Z_{p}}Z/p^{n}\simeq N/p^{n}$ and limits are exact, we have an equivalence
  \[N \otimes_{\Z_{p}}\Z/p^{n}\simeq \lim_{k >n}(N_{k}\otimes_{\Z_{p}}\Z/p^{n}).\]
  Thus, the unit of the adjunction may be written as
  \[ \lim_{k>n}(N_{k} \otimes_{\Z_{p}}\Z/p^{n}) \to \lim_{k>n}(N_{k}\otimes_{\Z/p^{k}}\Z/p^{n})\simeq N_{n}\]
  and so has fiber given by
  \[ F_{n}:=\lim_{k>n}\left(N_{k}\otimes_{\Z/p^{k}}\rm{fib}(\Z/p^{k}\otimes_{\Z_{p}}\Z/p^{n}\to \Z/p^{n}) \right).\]
  Now we compute the fiber of $\Z/p^{k}\otimes_{\Z_{p}}\Z/p^{n}\to \Z/p^{n}$ as the module
  \[ \rm{Tor}^{\Z_{p}}(\Z/p^{k}, \Z/p^{n})[1]\simeq \Z/p^{n}[1].\]
  The reduction map $\Z/p^{k}\to \Z/p^{k-1}$ is induced by the map of projective resolutions
\[\begin{tikzcd}
	{\Z_p} & {\Z_p} \\
	{\Z_p} & {\Z_p}
	\arrow["{\cdot p^k}", from=1-1, to=1-2]
	\arrow["\id", from=1-2, to=2-2]
	\arrow["{\cdot p}"', from=1-1, to=2-1]
	\arrow["{\cdot p^{k-1}}"', from=2-1, to=2-2],
\end{tikzcd}\]
hence, on Tor it induces the multiplication by $p$ map
\[ \Z/p^{n}=\rm{Tor}^{\Z_{p}}(\Z/p^{k}, \Z/p^{n})\rar{\cdot p} \rm{Tor}^{\Z_{p}}(\Z/p^{k-1}, \Z/p^{n}) =\Z/p^{n}.\]
Thus, if we have $k\p > k > n$ such that $k\p -k > n$, the transition map
\[ F_{k\p}=N_{k\p} \otimes \rm{Tor}^{\Z_{p}}(\Z/p^{k}, \Z/p^{n})\to N_{k} \otimes \rm{Tor}^{\Z_{p}}(\Z/p^{k-1}, \Z/p^{n})= F_{k}\]
vanishes since the Tor-groups are $p^{n}$-torsion. Choosing a cofinal subset $S\subseteq \bb{N}_{>n}$ such that
$\abs{k\p -k}> n$ for any distinct $k\p,k\in S$, we see that
\[ \lim_{k>n} F_{k}\simeq \lim_{k\in S} F_{k} \simeq 0 \]
vanishes. Thus, since limits are exact, the map $N \otimes_{\Z_{p}} \Z/p^{n}\rar{\sim}N_{n}$ is an equivalence.\\
To see that the functor $\rm{Mod}_{\Z_{p}}^{\wedge} \to \flim_n \rm{Mod}_{\Z/p^{n}}$ is strong monoidal,
we observe that since cofibers and limits are exact, we have for each $n$ equivalences
\begin{align*}
  (M \otimes_{\Z_{p}} N)^{\wedge}_{p} \otimes_{\Z_{p}}\Z/p^{n} &\simeq \lim_{k}(M/p^{k} \otimes_{\Z_{p}}N/p^{k})/p^{n}\\
                                              &\simeq \lim_{k}\left((M/p^{n} \otimes_{\Z_{p}} N/p^{n})\otimes_{Z_{p}}\Z/p^{k}\right) \\
  &\simeq ((N\otimes_{\Z_{p}}\Z/p^{n}) \otimes_{\Z_{p}} (M \otimes_{\Z_{p}}\Z/p^{n}))^{\wedge}_{p}.
\end{align*}
This proves the claim.
\end{proof}

\begin{corollary}\label{pcomp1}
  We have an equivalence of categories
  \[ \rm{cCAlg}(\Mod_{W_0(k)}^{\wedge} \rar{\sim} \flim_{n} \rm{cCAlg}(\Mod_{W_0(k)/p^{n}})
  \quad A \mapsto (A\otimes_{W(R)} W(R)/p^{n})\]
  with inverse taking a system of coalgebras $(B_{n})$ to the limit $\lim_{n}B_{n}$ computed in the
  category of $p$-complete $W_0(k)$-modules.
\end{corollary}
\begin{proof}
This follows from Proposition~\ref{pcomp}, arguing as in the proof of Proposition~\ref{Mod}.
\end{proof}

We also observe that the tangent complex is not affected by $p$-completion of coalgebras.

  \begin{lemma}\label{pcomparison}
    Write $\cl{X}(\blank)=\rm{cCAlg}(\Mod^{\rm{cn}}_{\blank})$ and $\cl{Y}(\blank)=
    \rm{cCAlg}(\Mod^{\rm{cn}~\wedge}_{\blank})$. Then the $p$-completion map $f:\cl{X}\to \cl{Y}$
    induces an equivalence
    \[ T^{M}_{(\cl{X}^{\Delta^{n}})_{\xi}} \to  T^{M}_{(\cl{Y}^{\Delta^{n}})_{f(\xi)}}\]
        for every $\F_{p}$-module $M$, $n\in \bb{N}$ and $\xi \in \cl{X}(\F_{p})^{\Delta^{n}}$.
  \end{lemma}
  \begin{proof}
    Since $\F_{p}$-algebra $R$ is $p$-complete, the $p$-completion functor gives an equivalence
    $\rm{Mod}_{R}\rar{\sim} \rm{Mod}_{R}^{\wedge}$, since multiplication by some power of $p$
    is nullhomotopic over $\F_{p}$. In particular, this applies to the split square zero
    extension $\F_{p}\oplus M$ for any $M \in \rm{Mod}_{\F_{p}}$ and so the natural map
    $\cl{X}(\F_{p}\oplus M) \to \cl{Y}(\F_{p}\oplus M)$ is an equivalence as well.
    Consequently, we also obtain natural equivalences between the fibers
    \[ (\cl{X}^{\Delta^{n}})_{\xi}^{\F_{p}\oplus M} \to  (\cl{Y}^{\Delta^{n}})_{f(\xi_)}^{\F_{p}\oplus M},\]
    which induces the equivalence of spectra
    \[ T^{M}_{(\cl{X}^{\Delta^{n}})_{\xi}} \to  T^{M}_{(\cl{Y}^{\Delta^{n}})_{f(\xi)}}\]
      as claimed.
  \end{proof}

\subsection{Formally \'etale coalgebras}

Throughout this section, we fix a cohesive functor
\[ \cl{C}_{\blank}: \rm{CAlg}^{\rm{cn}}\to \CAlg(\rm{Pr^L}).\]
Then by Lemma~\ref{limits} the functor
\[ \cCAlg(\cl{C}_\blank): \rm{CAlg}^{\rm{cn}} \to \rm{Pr^L}\]
is also cohesive and for any map $f: R \to S$ of connective $\bb{E}_\infty$ rings we have a coalgebraic adjunction
\[ f^\pt: \cCAlg(\cl{C}_{R}) \leftrightarrows \cCAlg(\cl{C}_S):f_\pt.\]
We refer to $f^\pt$ as base change and to $f_\pt$ as Weil restriction along $f$.

\begin{construction}\label{univdef}
Let $R$ be a connective $\bb{E}_\infty$-ring and $M$ a connective $R$-module. Denote by $e: R \to R\oplus M$
the 0-section of the split square zero extension.
    For any $A\in \rm{cCAlg}(\cl{C}_R)$ we define the \textit{universal $M$-deformation coalgebra}
    of $A$ as the Weil restriction
    \[ \Omega^{\infty}_{A}M:= e_{\pt}e^{\pt} A \in \rm{cCAlg}(\cl{C}_R).\]
    This naturally receives a unit map $\varepsilon:A \to \Omega_A^\infty M$.
\end{construction}

\begin{construction}\label{pi}
Suppose we are given an adjunction
\[\begin{tikzcd}
	{f^\pt:\cC} & {\cD:f_\pt}
	\arrow[""{name=0, anchor=center, inner sep=0}, shift left=2, from=1-1, to=1-2]
	\arrow[""{name=1, anchor=center, inner sep=0}, shift left=2, from=1-2, to=1-1]
	\arrow["\dashv"{anchor=center, rotate=-90}, draw=none, from=0, to=1]
\end{tikzcd}\]
 such that $f^\pt$ admits a retract $g^{\pt}:\cl{D}\to \cl{C}$. Consider the natural transformation
  \[\pi: f_{\pt}f^{\pt} \rar{\sim} g^{\pt} f^{\pt}f_{\pt}f^{\pt}
  \to  g^{\pt}f^{\pt} \rar{\sim} \id_{\cl{C}}\]
  defined as the whiskering of the counit $\eps:f^{\pt}f_{\pt} \to \id$ as in the diagram
\[\begin{tikzcd}
	{\cl{C}} & {\cl{D}} & {\cl{D}} & {\cl{C}}
	\arrow[""{name=0, anchor=center, inner sep=0}, "{f^\pt f_\pt}", curve={height=-12pt}, from=1-2, to=1-3]
	\arrow[""{name=1, anchor=center, inner sep=0}, "\id"', curve={height=12pt}, from=1-2, to=1-3]
	\arrow["{f^\pt}", from=1-1, to=1-2]
	\arrow["{g^{\pt}}", from=1-3, to=1-4]
	\arrow[shorten <=3pt, shorten >=3pt, Rightarrow, from=0, to=1].
\end{tikzcd}\]
Unraveling the definition we see that for each $B,A \in \cl{C}$ the composition
\[ \rm{Map}_{\cl{C}}(B, f_{\pt}f^{\pt} A) \rar{\sim}\rm{Map}_{\cl{D}}(f^{\pt}B, f^{\pt} A)
 \rar{g^{\pt}} \rm{Map}_{\cl{C}}(B, A)\]
takes $\psi: B \to f_{\pt}f^{\pt}A$ to the composite $\pi_{A} \circ \psi$. Thus, for each $\varphi:B \to A$ we have an
equivalence between the fiber
\[ \rm{fib}_{\varphi}\left(\rm{Map}_{\cl{D}}(f^{\pt}B, f^{\pt}A) \rar{g^{\pt}}
    \rm{Map}_{\cl{C}}(B, A) \right)\]
and the mapping space
\[ \rm{Map}_{\cl{C}_{/A}}((B\rar{\varphi} A), (f_{\pt}f^{\pt}A \rar{\eta_{A}} A)).\]
\end{construction}

\begin{lemma}\label{prlspectra}
Let $\cC =\lim_i\cC_i$ be a limit diagram in $\CAlg(\rm{Pr^L})$ and suppose we are
given a map $f: \cD\to \cC$ with projections $f_i: \cD \to \cC_i$. Then 
for any $A\in \cCAlg(\cl{\cD})$ we have a natural equivalence
\[ f_\pt f^\pt A \simeq \lim_i (f_i)_\pt (f_i)^\pt A \in \cCAlg(\cD). \]
\end{lemma}
\begin{proof}
Unravelling the adjunction and using the Yoneda lemma, this is equivalent to 
the claim that for any $B\in \cCAlg(\cD)$ the natural map
\[ \Map_{\cCAlg(\cC)}(f^\pt B, f^\pt A) \to \lim_i \Map_{\cCAlg(\cC_i)}(f_i^\pt B, f_i^\pt A) \]
is an equivalence, which is precisely the formula for mapping spaces
in the limit $\cC=\lim_i\cC_i$.
\end{proof}

\begin{proposition}\label{specref}
    Let $R$ be a connective $\bb{E}_\infty$-ring and $M\in \Mod_R^{\cn}$. Then for any 
    $A\in \cCAlg(\cC_R)$ we have a natural pullback diagram
    \[\begin{tikzcd}
	{\Omega^\infty_AM} & A \\
	A & {\Omega^\infty_A M[1]}
	\arrow[from=1-1, to=1-2]
	\arrow[from=1-1, to=2-1]
	\arrow[from=1-2, to=2-2]
	\arrow[from=2-1, to=2-2].
\end{tikzcd}\]
\end{proposition}
\begin{proof}
Indeed, since $\cCAlg(\cC_{\blank})$ is cohesive we have a pullback diagram in $\rm{Pr^L}$
of the form
    \[\begin{tikzcd}
	{\cCAlg(\cC_{R\oplus M})} & {\cCAlg(\cC_{R})} \\
	{\cCAlg(\cC_R)} & {\cCAlg(\cC_{R\oplus M[1]})}
	\arrow[from=1-1, to=1-2]
	\arrow[from=1-1, to=2-1]
	\arrow[from=1-2, to=2-2]
	\arrow[from=2-1, to=2-2],
\end{tikzcd}\]
where the maps are base change along the projection $p: R\oplus M \to R$ and 0-section
$s:R\to R\oplus \Sigma M$ respectively. Base change along the 0-section $e: R\to R\oplus M$
gives a map into the pullback $e^\pt: \cCAlg(\cC_R) \to \cCAlg(\cC_{R\oplus M})$. Unwrapping
the definitions and using that $p^\pt e^\pt =\id$, the claim follows by applying
Lemma~\ref{prlspectra}.
\end{proof}

\begin{definition}\label{formalet}
    Let $R$ be a connective $\bb{E}_\infty$-ring and $A\in \cCAlg(\cC_R)$. 
    By Proposition~\ref{specref}, the assignment
    $M \mapsto \Omega^\infty_A M $ canonically refines to a functor
    \[ L_A: \Mod_R \to \Sp(\cCAlg(\cC_R)_{/A}),\]
    such that $\Omega^\infty L_A(M)\simeq \Omega^\infty_A M$. We call $A$ \textit{formally \'etale} 
    if $L_A\simeq 0$ and denote by $\cCAlg(\cC_R)^{\fet}$ the full subcategory spanned
    by the formally \'etale coalgebras.
\end{definition}

\begin{remark}
    By construction of the functor $L_A$, a coalgebra $A \in \cCAlg(\cC_R)$ 
    is formally \'etale if and only if for any $M\in \Mod_{R}^{\cn}$ either of the natural maps
    \[ A \rar{\eps} \Omega^\infty_A M \rar{\pi} A\]
   is an equivalence. In fact, since the composition $\pi \circ \eps$ is always homotopic to the 
   identity, it suffices to show that there exists some isomorphism $\Omega^\infty_A M \simeq A$.
   This is how we verify the condition in practice. 
\end{remark}

\begin{remark}
    We do not know whether the functor $L_A$ commutes with limits or colimits. Hence, all we may
    deduce about the closure properties of formally \'etale coalgebras is the following
    \begin{enumerate}
        \item Since the product of coalgebras is given by the underlying tensor product and base
        change is symmetric monoidal, finite products of formally \'etale coalgebras are formally 
        \'etale.
        \item Let $\kappa$ be a regular cardinal such that $\cCAlg(\cC_R)$ is $\kappa$-presentable.
              Then $\kappa$-filtered colimits of formally \'etale coalgebras are formally \'etale.
    \end{enumerate}
    One may contemplate the a priori weaker condition where we only ask that $L_A(\Sigma^nR)=0$ for
    all $n\geq 0$. This admits stronger closure properties, for example it is closed under
    limits since $R\oplus \Sigma^nR$ is dualizable as an $R$-module. Moreover, one 
    can combine Proposition~\ref{structure} with Proposition~\ref{etalchar} to see that this is
    equivalent to $L_A(M)$ vanishing for all dualizable $M$. With appropriate modification, the results
    of this section also apply to this notion of formally \'etale coalgebras. However, working
    with this notion makes lifting along iterated square zero extensions more annoying and we do not 
    know of an example of a coalgebra which satisfies this condition but not the stronger one of
    Definition~\ref{formalet}.
\end{remark}

\begin{definition}\label{derivations}
Let $R$ be a connective $\bb{E}_\infty$-ring, $A\in \rm{cCAlg}(\cl{C}_R)$ 
and $M \in \Mod_R^{\rm{cn}}$. For any other $B\in \cCAlg(\cl{C}_R)$ and a 
map $\varphi : B\to A$ we define the \textit{spectrum of derivations} from 
$B$ to $M$ as the mapping spectrum
  \[ \rm{der}_{\varphi}(B, M):= 
  \rm{map}_{\Sp(\rm{cCAlg}(\cl{C}_{R})_{/A})}(\Sigma^\infty_+B, L_A(M)).\]
  We also write 
  \[ \rm{Der}_{\varphi}(B, M) := \Omega^\infty \rm{der}_\varphi(B,M) \]
  for the underlying space.
\end{definition}

\begin{proposition}\label{maplifts}
  Let $R$ be a connective $\bb{E}_\infty$-ring, $M \in \Mod_R^{\rm{cn}}$
  and let $\cl{X}(\blank)=\rm{cCAlg}(\cl{C}_{\blank})$.
  Moreover, let $\varphi: B \to A$ a map in $\cCAlg(\cl{C}_{R})$
  i.e.~a point $\varphi \in \cl{X}^{\Delta^1}(R)$.
  We have natural equivalences
  \[ T^M_{\cl{X}^{\Delta^1}_\varphi} \simeq \rm{der}_\varphi (B, M)\]
  \[ T^M_{\cl{X}^{\Delta^0}_A} \simeq \rm{der}_{\id} (A, M[1]).\]
\end{proposition}
\begin{proof}
The first equivalence is clear from Construction~\ref{pi}. For the second, let $e:R \to R\oplus M[1]$
denote the 0-section. Observe that, since $\cl{X}^{\Delta^0}$ is cohesive we have natural equivalences
\begin{align*}
 (\cl{X}^{\Delta^0})_{A}^{R\oplus M} &\simeq \Omega (\cl{X}^{\Delta^0})_{A}^{R\oplus M[1]}\\
  &\simeq  \rm{fib}_{\id_{A}}(\rm{Map}_{\rm{cCAlg}(\cl{C}_{R\oplus M[1]})}(e^{\pt}A, e^{\pt}A )
  \to  \rm{Map}_{\rm{cCAlg}(\cl{C}_R)}(A, A))\\
  &\simeq \rm{Der}_{\id} (A, M[1]),
\end{align*}
as claimed.
\end{proof}

\begin{corollary}\label{defobject}
  Let $A\in \rm{cCAlg}(\cl{C}_{R})$ be formally \'etale and $R^{\eta} \to R$ a square zero extension 
  with fiber $M$. Then the space
  \[\rm{fib}_{A}\left(\rm{cCAlg}(\cl{C}_{R^{\eta}}) \to \rm{cCAlg}(\cl{C}_{R}) \right)\]
  is contractible, i.e.~$A$ admits an essentially unique lift to a coalgebra in $\cl{C}_{R^\eta}$
\end{corollary}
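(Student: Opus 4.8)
The plan is to reduce the statement to the vanishing of a single tangent complex and then deduce that vanishing directly from the defining property of formal étaleness. Write $M \in \rm{Mod}_R^{\rm{cn}}$ for the fiber of the square zero extension $R^{\eta} \to R$ and consider the cohesive functor $X = (\rm{cCAlg}^{\rm{cn}}_{\blank})^{\Delta^{0}}$ supplied by Corollary~\ref{cohesive}, so that the fiber appearing in the statement is precisely the deformation space $X_A^{R^{\eta}}$. By Corollary~\ref{deformations} (i.e.\ Proposition~\ref{def}) this space is either empty or a torsor under $\Omega^{\infty} T^{M}_{X_A}$, carrying a single obstruction class in $\pi_{-1} T^{M}_{X_A}$. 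Hence it suffices to prove that $T^{M}_{X_A} \simeq 0$: this forces the obstruction class to vanish, so that $X_A^{R^{\eta}}$ is non-empty, and simultaneously contracts the torsor, since $\Omega^{\infty} 0 \simeq \pt$.

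To compute the tangent complex I would invoke Lemma~\ref{shift}, which gives a natural equivalence $T^{M}_{X_A} \simeq \rm{der}(A, C_A(M[1]))$. By Construction~\ref{dersp} the right hand side is the spectrum whose $n$-th space is the derivation space $\rm{Der}_{\id}(A, C_A(M[n+1]))$, so it is enough to show that each of these spaces is contractible. This is where formal étaleness enters. Since $M$ is connective, every shift $M[n+1]$ with $n \ge 0$ is again connective, so the hypothesis that $A$ is formally étale applies and yields $C_A(M[n+1]) \simeq 0$. As observed in the discussion preceding the definition of formal étaleness, the factorization $A \rar{\eps} \Omega^{\infty}_A(M[n+1]) \rar{\pi_A} A$ of the identity shows that $\eps$ is an equivalence if and only if $\pi_A$ is; because $C_A(M[n+1]) = \rm{cofib}(\eps) \simeq 0$, both maps are equivalences.

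By the description in Example~\ref{maplifts}, the space $\rm{Der}_{\id}(A, C_A(M[n+1]))$ is exactly the space of lifts of $\id_A$ along $\pi_A \colon \Omega^{\infty}_A(M[n+1]) \to A$, that is, the fiber over $\id_A$ of the map $\rm{Map}_{\rm{cCAlg}_R}(A, \Omega^{\infty}_A(M[n+1])) \to \rm{Map}_{\rm{cCAlg}_R}(A, A)$ given by postcomposition with $\pi_A$. As $\pi_A$ is an equivalence, this postcomposition map is an equivalence of mapping spaces, so the fiber is contractible. Feeding this back, every space of the spectrum $\rm{der}(A, C_A(M[1]))$ is contractible, whence $T^{M}_{X_A} \simeq 0$ and the corollary follows as in the first paragraph.

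The argument is almost entirely formal once the machinery is in place, so I do not expect a genuine obstacle so much as a point requiring care: one must match the shift conventions of Lemma~\ref{shift} against the connectivity of the modules $M[n+1]$, so that the formally étale hypothesis is legitimately applicable at every level of the spectrum, and one must use precisely the identification of the derivation space with a lifting problem against $\pi_A$ furnished by Example~\ref{maplifts}. Everything else is a direct consequence of the torsor description of deformations.
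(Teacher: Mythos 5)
Your proposal is correct and is precisely the argument the paper leaves implicit: identify the fiber with the deformation space $X_A^{R^{\eta}}$, reduce to showing $T^M_{X_A}\simeq 0$ via Corollary~\ref{deformations}, and obtain that vanishing from Lemma~\ref{shift} together with the fact that formal \'etaleness makes each $\pi_A\colon \Omega^{\infty}_A(M[n+1])\to A$ an equivalence, so every derivation space $\rm{Der}_{\id}(A, C_A(M[n+1]))$ is contractible. Your care with the shift conventions (connectivity of $M[n+1]$ for $n\ge 0$) is exactly the point one needs to check, and it goes through.
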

\begin{proof}
Indeed, since $\cl{X}=\cCAlg(\cC_{\blank})^{\Delta^0}$ is cohesive and $L_A\simeq 0$  Proposition~\ref{maplifts} implies that
\[ T^M_{\cl{X}^{\Delta^0}_A} \simeq \rm{der}_{\id} (A, M[1]) 
= \rm{Map}_{\Sp(\rm{cCAlg}(\cl{C}_{R})_{/A})}(\Sigma^\infty_+B, L_A(M[1])) \simeq 0.\]
Hence, the claim follows from Proposition~\ref{def}.
\end{proof}

For dualizable coalgebras, our notions of formally \'etale and derivations are compatible with
the usual notions from higher algebra.

 \begin{proposition}\label{cotangentder}
   Let $R$ be a connective $\bb{E}_{\infty}$-ring and assume that $B,A\in \rm{cCAlg}(\Mod^{\cn}_{R})$
   with $A$ dualizable. For every map $\varphi:B \to A$ with $R$-linear dual
   $\varphi^\vee: A^\vee \to B^\vee$ and every $M \in \Mod^{\cn}_R$ we have
  \[ \rm{Der}_{\varphi}(B, M) 
  \simeq \rm{Map}_{\rm{Mod}_{A^\vee}}(L_{A^\vee/R}, \varphi{\pt}\rm{map}_{R}(B, M)).\]
 \end{proposition}
\begin{proof}
Write $R\p = R \oplus M$ and denote by $e: R \to R\p$ the 0-section.
  By construction, the space $\rm{Der}_{\varphi}(B, M)$ is equivalent to the fiber
  \[ F_{\varphi}:=\rm{fib}_{\varphi}\left(\rm{Map}_{\rm{cCAlg}(\Mod_{R\p})}(e^\pt B,e^\pt A^\vee)
      \to \rm{Map}_{\rm{cCAlg}(\Mod_{R})}(B, A^\vee) \right)\]
  Since $A$ is dualizable, so is $e^\pt A$ with dual given by 
  $(e^\pt A)^{\vee}\simeq A^{\vee}\otimes_{R} R\p$.
  Thus, by Corollary~\ref{dualad}, applying $(\blank)^{\vee}$ yields an equivalence
  \begin{align*}
    \rm{Map}_{\rm{cCAlg}(\Mod_{R\p})}(e^\pt B, e^\pt A) &\simeq 
    \rm{Map}_{\rm{CAlg}(\Mod_{R\p})}(A^{\vee}\otimes_{R}R\p, \rm{map}_{R\p}(e^\pt B, R\p))\\
    &\simeq \rm{Map}_{\rm{CAlg}(\Mod_{R})}(A^{\vee}, \rm{map}_{R}(B, R\p))
  \end{align*}
  and similarly
  \begin{align*}
    \rm{Map}_{\rm{cCAlg}(\Mod_{R})}(B,A)\simeq \rm{Map}_{\rm{CAlg}(\Mod_{R})}(A^{\vee}, B^{\vee}).
  \end{align*}
  Thus, $F_{\varphi}$ is given by
  \begin{align*}
    F_{\varphi}&\simeq 
    \rm{fib}_{\varphi^{\vee}}\left(\rm{Map}_{\rm{CAlg}(\Mod_{R})}(A^{\vee}, \rm{map}_{R}(B, R\p))
    \to \rm{Map}_{\rm{CAlg}(\Mod_{R})}(A^{\vee}, B^{\vee})\right)\\
    &\simeq \rm{Map}_{(\rm{CAlg}(\Mod_{R}))_{/B^\vee}}(A^{\vee}, \rm{map}_{R}(B, R\p)),
  \end{align*}
  i.e.~the space of lifts in the diagram
\[\begin{tikzcd}
	& {\rm{map}_R(B,R\p)} \\
	{A^\vee} & {B^\vee}
	\arrow[from=2-1, to=2-2]
	\arrow[from=1-2, to=2-2]
	\arrow[dashed, from=2-1, to=1-2].
\end{tikzcd}\]
Since $R\p\to R$ is a split square zero extension with fiber $M$, the map
$\rm{map}_{R}(B, R\p) \to B^{\vee}$ is a square zero extension as well with fiber
$\rm{map}_{R}(B, M)$. Hence, we have that
\begin{align*}
F_{\varphi}\simeq \rm{Map}_{A^{\vee}}(L_{A^{\vee}_{R}/R}, \varphi^{\vee}_{\pt}\rm{map}_{R}(B,M))
\end{align*}
as claimed.
\end{proof}

This provides us with the most accessible examples of formally \'etale coalgebras.

\begin{corollary}\label{dualetal}
  Let $R$ be a connective $\bb{E}_{\infty}$-ring and $A\in \rm{cCAlg}(\Mod^{\cn}_{R})$ be dualizable
  such that the relative cotangent complex $L_{A^{\vee}/R}$ vanishes. Then $A$ is formally \'etale.
\end{corollary}

\begin{example}
    For any $X\in \cl{S}^\omega$ the $\F_p$-homology $\F_p[X]$ is compact and hence dualizable
    in $\Mod_{\F_p}$ with dual given by the cohomology $\F_p^X$. By~\cite[][Proposition 2.4.12.]{dag8}
    we have $L_{\F_p^X/\F_p} \simeq 0$. Hence, by Corollary~\ref{dualetal} we see that $\F_p[X]$
    is a formally \'etale coalgebra. Crucially, this reasoning does not apply to Eilenberg-MacLane
    spaces, for which we give a more direct proof in \ref{homocoalg}, which allows
    us to drop the finiteness assumption on the space $X$.
\end{example}

\begin{remark}
 It is unclear whether the converse of Corollary~\ref{dualetal} holds.
 From Proposition~\ref{cotangentder} we can only deduce that
\[ \rm{Map}_{A^{\vee}}(L_{A^{\vee}/R}, \varphi^{\vee}_{\pt}\rm{map}_{R}(B, M)) \simeq 0\]
for each coalgebra $B$, $R$-module $M$ and morphism of algebras $\varphi:A^{\vee}\to B^{\vee}$.
\end{remark}

It is clear in the split case that maps into formally \'etale coalgebras also admit unique lifts.
To make precise what happens in the non-split case, let us first describe the space of lifts
in the general case.

\begin{proposition}\label{fibpb}
  Let $X(\blank)=(\rm{cCAlg}(\cl{C}_{\blank})^{\Delta^{1}}$, $R$ an $\bb{E}_{\infty}$-ring
  and $(A\rar{\varphi} B) \in X(R)$. Then for every connective $R$-module $M$ the fiber
  $X^{R\oplus M}_{\varphi}$ can be computed as the pullback
  \[ X^{R\oplus M}_{\varphi} \simeq \rm{Der}_{\rm{id}}(B, M[1]) \times_{\rm{Der}_{\varphi}(B, M[1])}
    \rm{Der}_{\rm{id}}(A, M[1]).\]
\end{proposition}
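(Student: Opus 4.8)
The plan is to transport the analysis of Example~\ref{maplifts} from the $\Delta^0$-functor to $X=(\rm{cCAlg}^{\rm{cn}}_{\blank})^{\Delta^1}$, and then to use that mapping spaces in an arrow $\infty$-category are themselves pullbacks of mapping spaces. Write $f:R\to R\oplus M[1]$ for the section and $g:R\oplus M[1]\to R$ for the augmentation, so that $g^{\pt}f^{\pt}=\id$. First I would invoke that $X$ is cohesive (Corollary~\ref{cohesive}) and apply Proposition~\ref{redex} to identify $X^{R\oplus M}_{\varphi}\simeq\Omega X^{R\oplus M[1]}_{\varphi}$, based at the trivial deformation $f^{\pt}\varphi$. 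Exactly as in Example~\ref{maplifts}, looping the fibre at $f^{\pt}\varphi$ exhibits
\[ X^{R\oplus M}_{\varphi}\simeq\rm{fib}_{\id_{\varphi}}\Big(\rm{Map}_{(\rm{cCAlg}_{R\oplus M[1]})^{\Delta^1}}(f^{\pt}\varphi,f^{\pt}\varphi)\rar{g^{\pt}}\rm{Map}_{(\rm{cCAlg}_{R})^{\Delta^1}}(\varphi,\varphi)\Big), \]
i.e.~as the space of self-lifts of $\varphi$ in the arrow category along base change by $g$.

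The key step is the standard computation of mapping spaces in $\rm{Fun}(\Delta^1,\cl{C})$: a morphism between two arrows is a commuting square, so for arrows $\psi,\psi'$ one has a natural pullback
\[ \rm{Map}_{\cl{C}^{\Delta^1}}(\psi,\psi')\simeq\rm{Map}_{\cl{C}}(\ev_0\psi,\ev_0\psi')\times_{\rm{Map}_{\cl{C}}(\ev_0\psi,\ev_1\psi')}\rm{Map}_{\cl{C}}(\ev_1\psi,\ev_1\psi'), \]
where the right-hand leg is post-composition with $\psi'$ and the left-hand leg is pre-composition with $\psi$. I would apply this with $\psi=\psi'=f^{\pt}\varphi$ over $\cl{C}=\rm{cCAlg}_{R\oplus M[1]}$ and with $\varphi$ over $\rm{cCAlg}_R$; since base change $g^{\pt}$ is a functor and hence preserves sources, targets and composition, it induces a map between the two resulting pullback squares, compatibly with the basepoints $\id_{f^{\pt}\varphi}$, $\id_{\varphi}$ and the corner point $\varphi$.

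Finally, because finite limits commute, taking $\rm{fib}_{\id_{\varphi}}$ of this map of pullback squares presents $X^{R\oplus M}_{\varphi}$ as the pullback of the fibres of the three legs over $\id_A$, $\id_B$ and $\varphi$ respectively. By Example~\ref{maplifts} (and the identification of Lemma~\ref{shift}) these fibres are precisely spaces of derivations: the two outer fibres are $\rm{Der}_{\id}(A,C_A(M[1]))$ and $\rm{Der}_{\id}(B,C_B(M[1]))$, while the middle fibre, taken over $\varphi$, is the space $\rm{Der}_{\varphi}(A,C_B(M[1]))$ of lifts of $\varphi$ along $\pi_B$. Tracking the two legs through these identifications, the left leg becomes post-composition with $f^{\pt}\varphi$ — which under functoriality of $A\mapsto\Omega^{\infty}_{A}(M[1])$ and naturality of $\pi$ is the map $\rm{Der}_{\id}(A,C_A(M[1]))\to\rm{Der}_{\varphi}(A,C_B(M[1]))$ — and the right leg becomes pre-composition with $\varphi$, giving $\rm{Der}_{\id}(B,C_B(M[1]))\to\rm{Der}_{\varphi}(A,C_B(M[1]))$. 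This is exactly the asserted pullback, the order of the two factors being immaterial.

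I expect the real work to be bookkeeping rather than conceptual: one must verify that the loop-space identification of the first paragraph is compatible with the arrow-category pullback of the second, so that passing to fibres over the chosen basepoints genuinely commutes with forming the pullback and reproduces the two composition maps above. A secondary point worth recording is the placement of indices in the glued-over term: the common space into which both endpoint deformation spaces map is the space of lifts of $\varphi:A\to B$, namely $\rm{Der}_{\varphi}(A,C_B(M[1]))$ with source $A$ and target $B$; this is the only arrangement for which both legs are defined when $\varphi:A\to B$, so I would state the middle term of the pullback in this form.
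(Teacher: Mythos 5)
Your proof is correct and takes essentially the same route as the paper: both arguments deloop via cohesiveness, compute loops in the arrow category as a pullback of endomorphism/automorphism spaces over a mapping space, pass to fibres using that finite limits commute, and identify the three corners with derivation spaces via Example~\ref{maplifts}. Your closing remark about the glued-over term is also well taken: as printed, the proposition's arrow $(A\rar{\varphi}B)$ is inconsistent with the notation $\rm{Der}_{\varphi}(B,C_{A}(M[1]))$, which by the paper's definition requires $\varphi\colon B\to A$ (the convention actually used when the result is applied in Proposition~\ref{etalchar}), so either the arrow direction in the statement or your relabelled middle term $\rm{Der}_{\varphi}(A,C_{B}(M[1]))$ is the intended reading --- the two are the same formula up to exchanging the names of $A$ and $B$.
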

\begin{proof}
Since $X$ is cohesive, the space $X_{\varphi}^{R\oplus M}$ fits into a pullback diagram
\[\begin{tikzcd}
	{X_{\varphi}^{R\oplus M}} & \pt \\
	\pt & {X_{\varphi}^{R\oplus M[1]}}
	\arrow[from=1-1, to=1-2]
	\arrow[from=1-1, to=2-1]
	\arrow[from=2-1, to=2-2]
	\arrow[from=1-2, to=2-2]
\end{tikzcd}\]
where the maps $\pt \to X_{\varphi}^{R\oplus M[1]}$ are given by the base changed
morphisms $\varphi \otimes_{R} (R \oplus M[1])$. \\
If $\cl{C}$ is any category and $(\psi:A\to B)\in \cl{C}^{\Delta^{1}}$, then $\Omega_{\psi}\cl{C}^{\Delta^{1}}$ is given
by the pullback
\[\begin{tikzcd}
	{\Omega_\psi\cl{C}^{\Delta^1}} & {\rm{Aut}_{\cl{C}}(A)} \\
	{\rm{Aut}_{\cl{C}}(B)} & {\rm{Map}_{\cl{C}}(A,B)}
	\arrow["{\blank \circ \psi}"', from=2-1, to=2-2]
	\arrow["{\psi \circ \blank}", from=1-2, to=2-2]
	\arrow[from=1-1, to=2-1]
	\arrow[from=1-1, to=1-2]
\end{tikzcd}.\]
Thus, we can compute the loop space $\Omega X_{\varphi}^{R\oplus M[1]}$ as the pullback
\[\begin{tikzcd}
	{\Omega X_{\varphi}^{R\oplus M[1]}} & {\rm{Der}_{\id}(A, M[1])} \\
	{\rm{Der}_{\id}(B, M[1])} & {\rm{Der}_{\id}(B, M[1])}
	\arrow[from=2-1, to=2-2]
	\arrow[from=1-2, to=2-2]
	\arrow[from=1-1, to=2-1]
	\arrow[from=1-1, to=1-2]
\end{tikzcd}\]
as claimed.
\end{proof}

In particular, if $A$ is formally \'etale this means that the space of lifts of $\varphi$
is equivalent to the space of lifts of $B$ to an $R\oplus M$-coalgebra so we get the following.

\begin{corollary}\label{defmaps}
  Let $R$ be a connective $\bb{E}_\infty$-ring, $A,B\in \rm{cCAlg}(\cl{C}_{R})$ with $A$ formally \'etale
  and let $q:R^{\eta} \to R$ be a square zero extension with fiber $M$. Suppose we are given lifts
  $A\p$ and $B\p$ of $A$ and $B$ respectively to
  $\rm{cCAlg}(\cl{C}_{R^{\eta}})$. Then the natural map
  \[q^\pt:\rm{Map}_{\rm{cCAlg}_{R^{\eta}}}(B\p, A\p) \to \rm{Map}_{\rm{cCAlg}_{R}}(B,A)\]
  is a homotopy equivalence.
\end{corollary}
\begin{proof}
Let $X(\blank)= \rm{cCAlg}(\cl{C}_{\blank})^{\Delta^1}$. Proposition~\ref{fibpb} implies that,
since $A$ is formally \'etale, we have $T_{X_\varphi}^M \simeq 0$ for any $\varphi: B\to A$. Thus,
each fiber of the map $q^\pt$ is contractible and the claim follows.
\end{proof}

We can also characterize formally \'etale coalgebras in this way, as the following proposition shows.

\begin{proposition}\label{etalchar}
    Let $R$ be a connective $\bb{E}_{\infty}$-ring and $A\in \rm{cCAlg}(\cC_R)$. We write
    $\cl{X}(R) = \rm{cCAlg}(\cC_R)$. Then $A$ is formally \'etale if and only if for every
    $B \in \rm{cCAlg}(\cC_R), M \in \rm{Mod}_R^{cn}$ and every morphism
    $\varphi:B\to A$, the map
    \[\rm{ev}_0:T_{(\cl{X}^{\Delta^1})_\varphi}^{M} \to T_{(\cl{X}^{\Delta^0})_{B}}^{M}\]
    induced by the evaluation at the domain is an equivalence.
\end{proposition}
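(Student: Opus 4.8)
The plan is to translate the comparison map induced by $\rm{ev}_0$ into a statement about spaces of derivations and then to invoke a Yoneda-style argument. Since $\rm{ev}_0$ evaluates an arrow at its source and $\varphi$ has source $B$, the map in question is $T^{M}_{(\cl{X}^{\Delta^1})_\varphi}\to T^{M}_{(\cl{X}^{\Delta^0})_B}$, whose target is $\rm{der}_\id(B,C_B(M[1]))$ by Lemma~\ref{shift}. First I would feed $\varphi\colon B\to A$ into Proposition~\ref{fibpb} (with the roles of source and target relabelled accordingly) and pass from the displayed pullback of spaces to a pullback of spectra; this is legitimate because each term is reduced and excisive in $M$ by Construction~\ref{dersp}, so the associated spectra are formed levelwise and finite limits of spectra are computed levelwise. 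This identifies
\[ T^{M}_{(\cl{X}^{\Delta^1})_\varphi}\simeq \rm{der}_\id(B,C_B(M[1]))\times_{\rm{der}_\varphi(B,C_A(M[1]))}\rm{der}_\id(A,C_A(M[1])), \]
with $\rm{ev}_0$ corresponding to the projection onto the first factor. Because $\rm{Sp}$ is stable, the fibre of $\rm{ev}_0$ agrees with the fibre of the parallel leg, so $\rm{ev}_0$ is an equivalence if and only if the precomposition map $\varphi^{*}\colon\rm{der}_\id(A,C_A(M[1]))\to\rm{der}_\varphi(B,C_A(M[1]))$ is an equivalence.

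The forward implication is then immediate. If $A$ is formally \'etale then $C_A(M[1])\simeq 0$, so both the source and the target of $\varphi^{*}$ vanish; the pullback collapses onto $\rm{der}_\id(B,C_B(M[1]))$ and $\rm{ev}_0$ becomes an equivalence for every $B$, $M$ and $\varphi$.

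For the converse I would first remove the suspension: as $\rm{der}_\varphi(B,C_A(\blank))$ is attached to a reduced excisive functor it commutes with the shift $M\mapsto M[1]$, so the hypothesis is equivalent to the assertion that $\varphi^{*}\colon\rm{der}_\id(A,C_A(M))\to\rm{der}_\varphi(B,C_A(M))$ is an equivalence for every connective $B$, every $M$ and every $\varphi\colon B\to A$. Applying $\Omega^{\infty}$ and unwinding Construction~\ref{dersp}, this says precisely that precomposition by $\varphi$ induces an equivalence of spaces
\[ \rm{Map}_{\rm{cCAlg}^{\rm{cn}}_{R/A}}((A,\id_A),\Omega^{\infty}_A M)\rar{\sim}\rm{Map}_{\rm{cCAlg}^{\rm{cn}}_{R/A}}((B,\varphi),\Omega^{\infty}_A M) \]
for every object $(B,\varphi)$. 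In other words, writing $Y=\Omega^{\infty}_A M$ with its projection $\pi_A\colon Y\to A$, the representable presheaf $\rm{Map}_{\rm{cCAlg}^{\rm{cn}}_{R/A}}(\blank,Y)$ inverts, for every object $(B,\varphi)$, the unique morphism from $(B,\varphi)$ to the terminal object $(A,\id_A)$. Evaluating at the initial object $(0,0\to A)$, where $0$ is the initial coalgebra of $\rm{cCAlg}^{\rm{cn}}_R$ and out of which every mapping space is contractible, forces $\rm{Map}_{\rm{cCAlg}^{\rm{cn}}_{R/A}}((A,\id_A),Y)\simeq\pt$; hence the presheaf is constant with value $\pt$ and $Y$ is terminal in $\rm{cCAlg}^{\rm{cn}}_{R/A}$. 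Thus $\pi_A$ is an equivalence, so its section $A\to Y$ is as well and $C_A(M)=\rm{cofib}(A\to Y)\simeq 0$; as $M$ was arbitrary, $A$ is formally \'etale.

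I expect the converse to be the main obstacle, and within it the step identifying $Y=\Omega^{\infty}_A M$ as a genuine object of the connective slice $\rm{cCAlg}^{\rm{cn}}_{R/A}$: the terminal-object argument only detects $Y$ among connective coalgebras, so it is essential that $f_{M,!}$ be read as the right adjoint of base change between the \emph{connective} coalgebra categories, which keeps $Y$ connective. One must also be careful to track the single suspension supplied by Lemma~\ref{shift} when passing between $C_A(M)$ and $C_A(M[1])$. By contrast, the promotion of Proposition~\ref{fibpb} to a pullback of spectra and the stable identification of the fibre of $\rm{ev}_0$ are purely formal.
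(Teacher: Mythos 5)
Your proposal is correct and follows essentially the same route as the paper: both reduce the statement, via the pullback square of Proposition~\ref{fibpb}, to the assertion that precomposition $\varphi^{*}\colon\rm{Der}_{\id}(A,C_A(M[1]))\to\rm{Der}_{\varphi}(B,C_A(M[1]))$ is an equivalence, handle the forward direction by the vanishing of all derivation spaces, and prove the converse by testing against the initial coalgebra and invoking Yoneda to see that $\Omega^{\infty}_{A}M$ is terminal over $A$. The only cosmetic difference is that you phrase the reduction as a pullback of spectra and use stability to compare parallel fibres, where the paper pastes pullback squares of spaces and tracks fibres over points; your explicit flagging of the connectivity of $\Omega^{\infty}_{A}M$ and the suspension bookkeeping is a welcome clarification of points the paper leaves implicit.
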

\begin{proof}
    Write
  \[ F_{B\p}^M:=\rm{fib}_{B\p}\left((\cl{X}^{\Delta^{1}})_{\varphi}^{R\oplus M} 
  \to (\cl{X}^{\Delta^{0}})_{B}^{R\oplus M}\right)\]
  for the fiber over some point $B\p\in (\cl{X}^{\Delta^{0}})_{B}^{R\oplus M}$. 
  Then by definition of the tangent complex, the map $T_{(\cl{X}^{\Delta^1})_\varphi}^{M} \to T_{(\cl{X}^{\Delta^0})_{B}}^{M}$ is an equivalence
  if and only if we have $F_{B\p}^{M} \simeq 0$ for all $M\in \rm{Mod}_{R}^{\rm{cn}}, B\p \in (\cl{X}^{\Delta^{0}})_{B}^{R\oplus M}$.
  The two pasted pullback squares
\[\begin{tikzcd}
	F_{B\p}^{M} & {(\cl{X}^{\Delta^1})_{\varphi}^{R \oplus M}} & {\rm{Der}_{\id}(A, C_A(M[1]))} \\
	\ast & {\rm{Der}_{\id}(B,C_B(M[1]))} & {\rm{Der}_{\varphi}(B, C_A(M[1]))}
	\arrow[from=1-1, to=2-1]
	\arrow[from=2-1, to=2-2]
	\arrow[from=1-2, to=2-2]
	\arrow[from=1-1, to=1-2]
	\arrow[from=1-2, to=1-3]
	\arrow[""{name=0, anchor=center, inner sep=0}, from=2-2, to=2-3]
	\arrow[from=1-3, to=2-3]
	\arrow["\lrcorner"{anchor=center, pos=0.125}, draw=none, from=1-1, to=2-2]
	\arrow["\lrcorner"{anchor=center, pos=0.125}, draw=none, from=1-2, to=0]
\end{tikzcd}\]
yield a fiber sequence
\[F_{M} \to \rm{Der}_{\id}(A, C_{A}(M[1])) \rar{\blank \circ \varphi }  \rm{Der}_{\varphi}(B, C_{A}(M[1])).\]
Hence, the ``only if'' direction holds. Moreover, we see that if $F_{B\p}^{M} \simeq 0$ for every
$M\in \rm{Mod}_{R}^{\rm{cn}}, B\p \in (\cl{X}^{\Delta^{0}})_{B}^{R\oplus M}$ and any morphism $B\rar{\varphi} A$, we obtain
a zigzag of equivalences
\[ \rm{Der}_{\varphi}(B, C_{A}(M[1])) \xleftarrow{\sim} \rm{Der}_{\id}(A, C_{A}(M[1]))
  \rar{\sim} \rm{Der}_{0}(0, C_{A}(M[1])) \simeq \pt,\]
where $0 \in \rm{cCAlg}_{k}$ denotes the initial coalgebra. Thus, we have that
\[\rm{Der}_{\varphi}(B, C_{A}(M))\simeq \Omega \rm{Der}_{\varphi}(B, C_{A}(M[1]))\simeq \pt\]
as claimed.
\end{proof}

\begin{corollary}\label{etallift}
  Let $R$ be a connective ring spectrum and $A\in \rm{cCAlg}(\cl{C}_{\blank})$ be formally \'etale.
  For a square zero extension $R^{\eta} \to R$ with fiber $M$ denote by $A^{\eta}$ the essentially
  unique lift of $A$ to $\cCAlg(\cC_{R^\eta})$. Then $A^{\eta}$ is also formally \'etale.
\end{corollary}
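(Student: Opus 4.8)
The plan is to verify that $A^{\eta}$ satisfies the derivation-theoretic characterisation of formal étaleness over $R^{\eta}$, transporting the corresponding vanishing statement for $A$ over $R$ along the square zero extension $R^{\eta}\to R$ (with fiber $M$) by means of Proposition~\ref{cofib}. Recall first that a connective coalgebra $C$ over a connective ring $S$ is formally étale precisely when $C_{C}(P)\simeq 0$ for every connective $S$-module $P$; by the Yoneda argument recorded before the definition of formal étaleness, together with Construction~\ref{dersp}, this is equivalent to the vanishing
\[ \rm{der}_{\chi}(D, C_{C}(P))\simeq 0 \]
for every connective $S$-coalgebra $D$, every connective $S$-module $P$ and every morphism $\chi\colon D\to C$. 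By hypothesis this holds for $C=A$, $S=R$. So, fixing a connective $R^{\eta}$-coalgebra $B^{\eta}$, a connective $R^{\eta}$-module $N$ and a morphism $\psi\colon B^{\eta}\to A^{\eta}$, and setting $B:=B^{\eta}\otimes_{R^{\eta}}R$ and $\varphi:=\psi\otimes_{R^{\eta}}R\colon B\to A$, the goal is to deduce $\rm{der}_{\psi}(B^{\eta}, C_{A^{\eta}}(N))\simeq 0$.

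Next I would recast $\rm{der}_{\psi}(B^{\eta}, C_{A^{\eta}}(-))$ as an object to which Proposition~\ref{cofib} applies. By Construction~\ref{dersp} the functor $P\mapsto \rm{der}_{\chi}(D, C_{C}(P))$ is reduced excisive and arises as the fiber of the evaluation map $(X^{\Delta^{1}})_{\chi}\to (X^{\Delta^{0}})_{C}\times (X^{\Delta^{0}})_{D}$, where the functors $X^{\Delta^{n}}=(\rm{cCAlg}^{\rm{cn}}_{(-)})^{\Delta^{n}}$ are cohesive by Corollary~\ref{cohesive}. The proof of Proposition~\ref{cofib} uses only two features of its input: excision in the module variable, and the base-change equivalence of Proposition~\ref{bc} along the $\pi_{0}$-surjection $R^{\eta}\to R$. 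Both features are inherited by $\rm{der}$, since it is a finite limit of the tangent complexes $T^{(-)}_{(X^{\Delta^{1}})_{\psi}}$, $T^{(-)}_{(X^{\Delta^{0}})_{A^{\eta}}}$ and $T^{(-)}_{(X^{\Delta^{0}})_{B^{\eta}}}$, each of which is excisive and satisfies Proposition~\ref{bc}. In particular, for any $R^{\eta}$-module whose action factors through $R$ one obtains $\rm{der}_{\psi}(B^{\eta}, C_{A^{\eta}}(P))\simeq \rm{der}_{\varphi}(B, C_{A}(P))$, which is precisely the base-change identification driving the argument.

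With this in place, I would run Proposition~\ref{cofib} at the point $\psi$, which lifts $\varphi$ along $R^{\eta}\to R$. It reduces the vanishing of $\rm{der}_{\psi}(B^{\eta}, C_{A^{\eta}}(N))$ to the two conditions
\[ \rm{der}_{\varphi}\!\left(B, C_{A}(M\otimes_{R}(R\otimes_{R^{\eta}}N))\right)\simeq 0, \qquad \rm{der}_{\varphi}\!\left(B, C_{A}(N\otimes_{R^{\eta}}R)\right)\simeq 0. \]
Since $M$, $N$, $R$ and $R^{\eta}$ are connective, the modules $M\otimes_{R}(R\otimes_{R^{\eta}}N)$ and $N\otimes_{R^{\eta}}R$ are connective $R$-modules, so both vanishings follow from the formal étaleness of $A$ recorded in the first paragraph. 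Hence $\rm{der}_{\psi}(B^{\eta}, C_{A^{\eta}}(N))\simeq 0$, and since $\psi$, $B^{\eta}$ and $N$ were arbitrary, the characterisation shows that $A^{\eta}$ is formally étale over $R^{\eta}$.

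The step I expect to be the main obstacle is the second paragraph: making precise that the derivation spectrum $\rm{der}_{\psi}(B^{\eta}, C_{A^{\eta}}(-))$ behaves like the tangent complex of a cohesive functor, so that Proposition~\ref{cofib} (and through it the base-change Proposition~\ref{bc}) genuinely applies. Once one is comfortable that excision and base change pass through the finite limit defining $\rm{der}$, the remainder is a routine connectivity check together with an appeal to the hypothesis that $A$ is formally étale; the essentially unique existence of the lift $A^{\eta}$ itself is already guaranteed by Corollary~\ref{defobject}.
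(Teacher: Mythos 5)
Your proposal is correct and follows essentially the same route as the paper: both reduce to the cofiber sequence $M\otimes_{R}(R\otimes_{R^{\eta}}N)\to N\to R\otimes_{R^{\eta}}N$ from Proposition~\ref{cofib}, exploit excisiveness of the tangent-complex-type functors together with base change (Proposition~\ref{bc}) along the $\pi_{0}$-surjection $R^{\eta}\to R$, and conclude from the formal \'etaleness of $A$ over $R$. The only cosmetic difference is that the paper phrases the conclusion via Proposition~\ref{etalchar} as a map of fiber sequences of tangent complexes whose outer vertical maps are equivalences, whereas you take fibers first and argue that the derivation spectra themselves vanish; these are the same argument.
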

\begin{proof}
  Let $B \to A^{\eta}$ be any map of $R^{\eta}$ coalgebras and write 
  $\cl{X}(\blank)= \rm{cCAlg}^{\rm{cn}}_{\blank}$.
  Then for any $N\in \rm{Mod}_{R^{\eta}}^{\rm{cn}}$ we need to show that the induced map
  \[ T^{N}_{\cl{X}^{\Delta^{1}}_{\varphi}} \rar{\sim} T^{N}_{\cl{X}^{\Delta^{0}}_{B}}\]
  is an equivalence. Arguing as in the proof of Proposition~\ref{cofib}, we see that $N$
  sits in a cofiber sequence
  \[ M\otimes_{R}(R \otimes_{R^{\eta}} N) \to N \to R \otimes_{R^{\eta}}N, \]
  where the $R^{\eta}$-action on the outer terms factors through $R$.
  Thus, writing $B\p \simeq B\otimes_{R^{\eta}} R$ and $\varphi\p = \varphi_{R^{\eta}}: B\p \to A$
  and using that the tangent complex functors are excisive, we obtain a commutative diagram
\[\begin{tikzcd}
	{T_{\cl{X}^{\Delta^1}_{\varphi\p}}^{M\otimes_{R}(R\otimes_{R^\eta} N)}} & {T^{M\otimes_{R}(R\otimes_{R^\eta} N)}_{\cl{X}^{\Delta^1}_\varphi}} & {T^N_{\cl{X}^{\Delta^1}_\varphi}} & {T^{R\otimes_{R^\eta} N}_{\cl{X}^{\Delta^1}_\varphi}} & {T_{\cl{X}^{\Delta^1}_{\varphi\p}}^{R\otimes_{R^\eta} N}} \\
	{T_{\cl{X}^{\Delta^0}_{B\p}}^{M\otimes_{R}(R\otimes_{R^\eta} N)}} & {T_{\cl{X}^{\Delta^0}_B}^{M\otimes_{R}(R\otimes_{R^\eta} N)}} & {T^N_{\cl{X}^{\Delta^0}_B}} & {T^{R\otimes_{R^\eta} N}_{\cl{X}^{\Delta^0}_B}} & {T_{\cl{X}^{\Delta^0}_{B\p}}^{R\otimes_{R^\eta} N}}
	\arrow[from=1-2, to=1-3]
	\arrow[from=1-3, to=1-4]
	\arrow[from=1-2, to=2-2]
	\arrow[from=2-2, to=2-3]
	\arrow[from=2-3, to=2-4]
	\arrow[from=1-4, to=2-4]
	\arrow[from=1-3, to=2-3]
	\arrow["\sim", from=1-4, to=1-5]
	\arrow["\sim", from=2-4, to=2-5]
	\arrow["\sim", from=1-5, to=2-5]
	\arrow["\sim"', from=1-2, to=1-1]
	\arrow["\sim", from=1-1, to=2-1]
	\arrow["\sim"', from=2-2, to=2-1]
\end{tikzcd}\]
where the inner two horizontal maps in each row form a cofiber sequence. 
The outer horizontal maps are the base change equivalences from Proposition~\ref{bc} and the outer
vertical maps are equivalences since by assumption $A= A^{\eta}\otimes_{R^{\eta}}R$
is formally \'etale. Thus, the middle map 
$T^{N}_{{\cl{X}^{\Delta^{1}}_{\varphi}}}\to T^{N}_{\cl{X}^{\Delta^{0}}_{B}}$
is an equivalence as well, so by Proposition~\ref{etalchar} the $R^{\eta}$-coalgebra $A^{\eta}$
is formally \'etale.
\end{proof}

We can neatly organize the results of this section into the following proposition.

\begin{proposition}\label{weillift}
    Let $R$ be a connective $\bb{E}_\infty$-ring and $q: R^\eta \to R$ be a square zero extension. 
    Weil-restriction along $q$ induces a fully faithful functor
    \[ q_\pt : \cCAlg(\cl{C}_R)^{\rm{f\acute{e}t}} \to \cCAlg(\cl{C}_{R^\eta})^{\rm{f\acute{e}t}}\]
    Moreover, for any $A\in \cCAlg(\cl{C}_R)^{\rm{f\acute{e}t}}$ the coalgebra $q_\pt A$ is, up to contractible
    choice, the unique lift of $A$ to $\cCAlg(\cl{C}_{R^\eta})$.
\end{proposition}
\begin{proof}
By Corollary~\ref{defobject}, there exists a unique lift $A\p \in \cCAlg(\cl{C}_{R^\eta})$ which is formally 
\'etale by Corollary~\ref{etallift}. Moreover, by Corollary~\ref{defmaps}, we have a natural equivalence 
\[ \Map_{\cCAlg(\cl{C}_{R^\eta})}(B, A\p) \rar{\sim} \Map_{\cCAlg(\cl{C}_R)}(p^\pt A, A)\]
and so $A\p = q_\pt A$. Since $A\p$ was a lift of $A$, the counit $q^\pt q_\pt A \rar{\sim} A$ is an equivalence
which implies that the restriction of $q_\pt$ is fully faithful as claimed.
\end{proof}

We can also iterate lifting along square zero extensions into a limit. This will be crucial in
the next section for going from $\F_p$ to the $p$-completed sphere $\S_p^\wedge$.

\begin{proposition}\label{limlift}
    Suppose we are given a diagram 
    \[ \dots \to R_2 \rar{q^{21}} R_1 \rar{q^{10}} R_0\]
    in $\CAlg^{\cn}$ with limit $R= \lim_n R$ such that each map $R_{n+1} \to R_{n}$ is a square
    zero extension and the functor $\cl{C}_R \to \lim_n \cl{C}_{R_n}$ is an equivalence. Write
    $q:R\to R_0$ for the natural map. Weil restriction along $q$ restricts to a fully faithful functor
    \[ q_\pt: \cCAlg(\cl{C}_{R_0})^{\fet} \to \cCAlg(\cl{C}_{R}),\]
    such that for any $A\in \cCAlg(\cl{C}_{R_0})^{\fet}$ the coalgebra $q_\pt A$ is, up to contractible choice,
    the unique lift of $A$ to an object of $\cCAlg(\cl{C}_{R})$.
\end{proposition}
\begin{proof}
First observe that by Lemma~\ref{limits} the map
\[ \cCAlg(\cl{C}_R) \to \lim \cCAlg(\cl{C}_{R_n})\]
is also an equivalence. Denote by $q^n$ the map $R_n \to R$. 
Then $q_\pt: \cCAlg(\cl{C}_{R_0})\to \cCAlg(\cl{C}_{R})$ is given
   by the limit of the functors
   \[ q^n_\pt:\cCAlg(\cl{C}_{R_0}) \to \cCAlg(\cl{C}_{R_n}). \]
   We have that $q^n_\pt \simeq q^{10}_\pt \circ q^{21}_\pt \circ \dots \circ q^{n(n-1)}_\pt$ and for each 
   $k$ the restriction
   \[ q^{k(k-1)}_\pt :\cCAlg(\cl{C}_{R_{k-1}})^{\fet} \to \cCAlg(\cl{C}_{R_k})^{\fet}\]
   is fully faithful by Proposition~\ref{weillift}. 
   Thus, $q^n_\pt$ is fully faithful when restricted to the formally \'etale coalgebras. 
   Moreover, since mapping spaces in a limit of categories are given by the limit of mapping spaces, a 
   limit of fully faithful functors is fully faithful and so $q_\pt$ is fully faithful when restricted
   to formally \'etale coalgebras. The fact that $q_\pt A$ is the unique lift of $A$ is immediate
   by Proposition~\ref{weillift} and the fact that fibers commute with limits.
\end{proof}
\section{Homology and spherical Weil-restriction}
\subsection{Lifting to the spherical Witt vectors}
Fix a prime $p$. We now collect the fruits of our labor and apply the machinery of formally 
\'etale coalgebras to the moduli problem
\[ \Mod_\blank^\wedge: \CAlg^{\cn} \to \CAlg(\rm{Pr^L}),\]
where $\Mod_R^\wedge$ denotes the category of $p$-complete $R$-modules. This is a legal move by 
Corollary~\ref{pnil}. Throughout this section we fix a perfect $\F_p$-algebra $k$ and 
denote the spherical Witt vectors by $\bb{W}(k)$ and the $p$-typical Witt Vectors as
$\bb{W}_0(k)= \pi_0\bb{W}(k)$.
Before we can state our main theorems, we require one more technical result about $p$-adic lifts
of formally \'etale coalgebras.

\begin{proposition}\label{etalliftzp}
 Let $A \in \cCAlg(\Mod_{\W_0(k)}^{\cn,\wedge})$ 
  Then $A$ is formally \'etale if $A_{1}:=A \otimes k \in \rm{cCAlg}(\Mod_k^{\cn})$
  is formally \'etale.
\end{proposition}
\begin{proof}
   Write $\cl{X}= (\rm{cCAlg}_{\blank}^{\rm{cn}})^{\wedge}_{p}$
   and let $\varphi: B \to A$ be a point in $\cl{X}(\W_0(k))^{\Delta^{1}}$.
   Denote for each $n\in \Z_{\geq 1}$ by $\varphi_{n}: B_{n}\to A_{n}$ the base change of 
   $\varphi$ to $\W_0(k)/p^{n}$. By Proposition~\ref{etalchar} it suffices to show that for
   every $\W_0(k)_{p}$-module $M$ the natural map
   \[T_{(\cl{X}^{\Delta^1})_\varphi}^{M} \to T_{(\cl{X}^{\Delta^0})_{B}}^{M}\]
  is an equivalence. This map is induced by the maps of spaces
  \[ (\cl{X}^{\Delta^{1}})_{\varphi}^{\W_0(k)\oplus M [k]} 
  \to (\cl{X}^{\Delta^{0}})_{B}^{\W_0(R) \oplus M[k]}\]
  and hence it suffices to show that these are equivalences. Since fibers commute with limits
  we have for each $i \in \Z_{>0}$, $\eta \in \cl{X}^{\Delta^{i}}(\W_0(R)$ with base change
  $\eta_{n}$ to $\W_0(R)/p^{n}$ and each $\W_0(R)$-module $N$ an equivalence
  \[(\cl{X}^{\Delta^{i}})_{\eta}^{\W_0(R)\oplus N} 
  \simeq \lim_{n} (\cl{X}^{\Delta^{i}})_{\eta_{n}}^{\W_0(R)/p^{n}\oplus N/p^{n}}.\]
  As each $A_{n}$ is a lift of $A_{1}$ to $\W_0(R)/p^{n}$ and hence by Corollary~\ref{etallift}
  formally \'etale, the map
  \[ (\cl{X}^{\Delta^{1}})_{\varphi_{n}}^{\W_0(R)/p^{n} \oplus M/p^{n}} 
  \to (\cl{X}^{\Delta^{1}})_{B_{n}}^{\W_0(R)/p^{n} \oplus M/p^{n}}\]
  is an equivalence for all $n$ and the claim follows.
\end{proof}

We now summarize our results in our first main theorem, namely that the right adjoint to base change
defines a $p$-typical Witt vector style functor for formally \'etale connective coalgebras over $\F_p$.

\begin{theorem}\label{wittzp}
    Let $q: \W_0(k) \to k$ be the augmentation. Connective Weil-restriction along $q$ induces a
    fully faithful functor
    \[\cl{W}_0: \rm{cCAlg}(\Mod_k^{\cn})^{\fet} \to \rm{cCAlg}(\Mod_{\W_0(k)}^{\cn,\wedge})^{\fet}.\]
    Moreover, for any $A \in \rm{cCAlg}(\Mod_{k}^{\rm{cn}})^{\fet}$ the coalgebra $\cl{W}_0(A)$
    is, up to contractible choice, the unique lift of $A$ to an object
    of $\rm{cCAlg}(\Mod_{\W_0(k)}^{\wedge})$.
\end{theorem}
\begin{proof}
  By Proposition~\ref{pcomp} we may apply Proposition~\ref{limlift} to the functor 
  \[\CAlg^{\cn}\to \CAlg(\rm{Pr^L}) \quad R \mapsto \Mod_R^{\cn,\wedge}\]
  and the $p$-completion tower 
 \[ \dots \to \W_0(k)/p^2 \to \W_0(k)/p = k\]
  to get fully faithfulness and the uniqueness of the lifts.
  Then Proposition~\ref{etalliftzp} tells us that the lift is again formally \'etale and we are done.
\end{proof}

\begin{proposition}
    Let $A \in (\rm{cCAlg}_{W(k)}^{\rm{cn}})^\wedge_p$, then $A$ is formally \'etale if
    $A\otimes W_0(k)$ is.
\end{proposition}
\begin{proof}
    Argue exactly as in Proposition~\ref{etalliftzp}.
\end{proof}

With this in hand, we obtain a spherical lift of Theorem~\ref{wittzp}.

\begin{theorem}\label{wittsp}
    Let $q: \W(k) \to k$ be the augmentation. Connective Weil-restriction along $q$ induces a fully
    faithful functor
    \[\cl{W}: \rm{cCAlg}(\Mod_k^{\cn})^{\fet} \to \rm{cCAlg}(\Mod_{\W(k)}^{\cn,\wedge})^{\fet}.\]
    Moreover, for any $A \in \rm{cCAlg}(\Mod_k^{\rm{cn}})^{\fet}$ 
    the coalgebra $\cl{W}(A)$ is, up to contractible choice, the unique lift of $A$ to an object 
    of $\rm{cCAlg}(\Mod_{\W(k)}^{\wedge})$.
\end{theorem}
\begin{proof}
By Theorem~\ref{wittzp} it suffices to show that Weil restriction along
$\W(k)\to \W_0(k)=\pi_0\W(k)$ is fully faithful on formally \'etale coalgebras. By Lemma~\ref{pnil} we can again apply Proposition~\ref{limlift} to the
functor 
  \[\CAlg^{\cn}\to \CAlg(\rm{Pr^L}) \quad R \mapsto \Mod_R^{\cn,\wedge}\]
  and the Postnikov tower
  \[ \dots \to \tau_{\leq 2} \W(k) \to \tau_{\leq 1}\W(k) \to \tau_{\leq 0}\W(k) = \W_0(k)\]
  and win.
\end{proof}

\subsection{Homology coalgebras}\label{homocoalg}

As observed in Example~\ref{homology}, for every space $X$ and every $\bb{E}_{\infty}$-ring $R$, the
$R$-homology $R[X]$ carries a natural $R$-coalgebra structure induced from the diagonal
map $X\to X\times X$. The main result of this section is Theorem~\ref{sepet}, namely that if
$R$ is a connective $\F_p$-algebra and $X$ is a connected space, the coalgebra $R[X]$ is
formally \'etale. Our proof relies on a recent computation of Bachmann\textendash Burklund in \cite{bb}.
Moreover, combining Theorem~\ref{sepet} and Theorem~\ref{wittsp} with the main result of
loc.cit.~yields a further embedding of $p$-complete nilpotent spaces into coalgebras in 
$p$-complete $\bb{W}(\overline{\F}_p)$-modules.
In the following, let $k$ denote a field of characteristic $p$ unless stated otherwise.\\

We begin by quickly recalling some facts about pro-objects, see loc.cit.~Section 5 for a more 
in-depth discussion. Let $\cl{C}$ be a category which admits finite limits. Recall that,
the category $\rm{Pro}(\cl{C})$ is defined as the full subcategory of 
$\rm{Fun}(\cl{C}, \cl{S})\op$ spanned by those functors which preserve finite limits.
We think of objects of $\Pro(\cC)$ as formal cofiltered limits of objects of $\cC$. 
If $\cC$ admits all small limits, the restricted Yoneda embedding gives an adjunction
\[\begin{tikzcd}
	{c: \cC} & {\Pro(\cC): M}
	\arrow[""{name=0, anchor=center, inner sep=0}, shift left=2, from=1-1, to=1-2]
	\arrow[""{name=1, anchor=center, inner sep=0}, shift left=2, from=1-2, to=1-1]
	\arrow["\dashv"{anchor=center, rotate=-90}, draw=none, from=0, to=1].
\end{tikzcd}\]
Here we think of $c$ (aka restricted Yoneda) as taking $X\in \cC$ to the constant diagram on $X$.
The functor $M$ is called materialization and takes a formal limit $\flim_\lambda X_\lambda$ to 
the actual limit computed in $\cC$. An object $ \flim X_\lambda \in \rm{Pro}(\Mod_R)$ is called \textit{pro-truncated} if each $X_\lambda$ is
bounded above. The inclusion of pro-truncated objects admits a left adjoint denoted $\tau_{<\infty}$,
which can explicitly described by the formula
\[ \tau_{<\infty} \lim_{\lambda} X_\lambda = \lim_{n, \lambda} \tau_{\leq n} X_{\lambda}. \]
An object $E \in \rm{Pro}(\Mod_R)$ is called \textit{pro-constant up to pro-truncation} if the natural
map
\[ \tau_{<\infty} c M(E) \to \tau_{<\infty} E\]
is an equivalence in $\rm{Pro}(\Mod_R)$. If $\cC$ is symmetric monoidal, then $\rm{Pro}(\cC)$
inherits a natural symmetric monoidal structure via the pointwise tensor product such that the
inclusion $\cC \to \rm{Pro}(\cC)$ is monoidal. We denote the partially defined right adjoint
to $c: \cCAlg(\cC) \to \cCAlg(\Pro(\cC))$ by $M^{cA}$ and refer to it as
\textit{coalgebraic materialization}. Moreover, we write 
\[ \widehat{C}: \Pro(\cC) \to \cCAlg(\Pro(\cC))\]
for the pro-version of the cofree coalgebra functor. Observe that, for any $X\in \cC$ 
the coalgebraic materialization $M^{cA}\widehat{C}(cX)\in \cCAlg(\cC)$ exists and is given by
$C(X)$. The first key insight is that, although $\Pro(\cC)$ is usually not presentable even if $\cC$ is, 
the category $\cCAlg(\Pro(\cC))$ is better behaved with respect to limits.

\begin{lemma}\label{probc1}
    Let $f:R\to S$ be a map of $\bb{E}_\infty$-rings. The base change 
    \[ f^\pt:\rm{cCAlg}(\rm{Pro}(\Mod_R)) \to \cCAlg(\rm{Pro}(\Mod_S)) \]
    preserves all limits, cofree coalgebras and constant objects.
\end{lemma}
\begin{proof}
    The base change $\Mod_R \to \Mod_S$ commutes with finite limits and is accessible. Thus,
    the induced functor $f^\pt:\rm{Pro}(\Mod_R) \to \rm{Pro}(\Mod_S)$ commutes with all limits. 
    In particular for any $ M\in \rm{Pro}(\Mod_R)$ the cofree coalgebra on $M$ is computed 
    by the formula
    \[ \hat{C}_R(M) \simeq \prod_n (M^{\otimes n})^{h\Sigma_n} \in \cCAlg(\Pro(\Mod_R)),\]
    and thus, since $f^\pt$ commutes with tensor products and limits we get
    \[ f^\pt \hat{C}_R(M) = \hat{C}_S(f^\pt M)\]
    as claimed. Finally, the fact that $f^\pt$ preserves constant objects is clear,
    since it is induced by $\rm{Pro}(\blank)$.
\end{proof}

\begin{lemma}\label{probc2}
   Let $f:R\to S$ be a map of connective $\bb{E}_\infty$-rings and let $E$ be a connective
   $R$-module. Then the base change $f^\pt(\tau_{<\infty}cE) \in \rm{Pro}(\Mod_S)$ is
   pro-constant up to pro-truncation.
\end{lemma}
\begin{proof}
We need to show that the natural map
\[ \tau_{<\infty} c M(f^\pt(\tau_{<\infty}cE )) \to \tau_{<\infty}(f^\pt\tau_{<\infty}cE)\]
is an equivalence. Unraveling the definitions, we see that
 \[\tau_{<\infty} c M(f^\pt(\tau_{<\infty}cE ))
 = \tau_{<\infty}c(\lim_n ((\tau_{\leq n} M) \otimes_R S))
 \simeq \tau_{<\infty} c(E\otimes_R S) \simeq \flim_i c(\tau_{\leq i}(E\otimes_R S)),\]
 where we have used that, since everything is connective, the natural map
 \[  E \otimes_R S \rar{\sim} \lim_n\tau_{\leq n} E \otimes_R S  \]
 is an equivalence. Conversely, we have that 
 \[\tau_{<\infty}(f^\pt\tau_{<\infty}cE)
 \simeq \flim_{k,n} c(\tau_{\leq k}((\tau_{\leq n} E)\otimes_R S)).\]
 Again using the connectivity of everything in sight, we can find a cofinal system 
 of pairs $(n,k)$ with $n>>k$ such that
 \[ \tau_{\leq k}((\tau_{\leq n}E) \otimes_R S) \simeq \tau_{\leq k} (E \otimes_R S),\]
and so the claim follows. 
\end{proof}

The second key insight of is the \textit{Arint-Schreier map} on the cofree coalgebra. We quickly
recall its construction and generalize it slightly.

\begin{construction}
The functor
\[ (\blank)^\vee=\rm{map}_k(\blank , k): \cCAlg(\Mod_k) \to \Mod_k\]
is represented by the spectrum object $\{C_k(\Sigma^n k)\}$. Hence, on any $A\in \cCAlg(\Mod_k)$
the power operation 
\[ Q_0 - 1: A^\vee \to A^\vee\]
induces a map
\[ F-1: C_k(\Sigma^n k) \to C_k(\Sigma^nk)\]
called the \textit{Artin\textendash Schreier} map. In \cite[][Construction 7.4]{bb},
for every pointed set $X$ and any $n\geq 0$ a pro Artin\textendash Schreier map
\[ \widehat{F}-1 :\widehat{C}( c \Sigma^n k\{X\}) \to \widehat{C}(c\Sigma^n k\{X\})\]
in $\cCAlg(\rm{Pro}(\Mod_k))$ is defined, which is natural in inert maps of pointed sets and 
recovers the ordinary Artin\textendash Schreier map upon coalgebraic materialization. 
Now let $S$ be a $k$-algebra.
Since pro-base change commutes with cofree coalgebras, we can base change
$\widehat{F}-1$ along any map of $\E_\infty$-rings $k \to R$ to obtain a map
\[ \widehat{F}_S-1: \widehat{C}_S(c \Sigma^n S\{X\}) \to \widehat{C}_S(c\Sigma^n S\{X\}).\]
We define the ($S$-linear) Artin\textendash Schreier map 
\[ F_S-1: C_S(\Sigma^n S\{X\}) \to C_S(\Sigma^n S\{X\})\]
as the materialization of $\widehat{F}_S-1$. 
\end{construction}

\begin{remark}\label{frobres}
Let $f:R\to S$ be a map of $k$-algebras. We can Weil restrict along $f$ to obtain a modified
Artin\textendash Schreier map 
\[ f_!(F_S-1): C_R(\Sigma^n S\{X\}) \to C_R(\Sigma^n S\{X\}).\]
Tracing through the adjuncions, we see that for $X=\pt$, this represents the operation $Q_0-1$
acting on $\rm{map}_k(A,S) \in \CAlg(\Mod_k)$.
\end{remark}

\begin{proposition}\label{bbpb}
    Let $k$ be a field of characteristic $p$ and $R \in \rm{CAlg}(\rm{Mod}_k)^\rm{cn}$.
    For every $X\in \rm{Set}_\pt$ and $n\geq 0$, the Artin\textendash Schreier map $F_R$ induces a pullback
    diagram in $\rm{cCAlg}(\rm{Mod}_R)$ of the form
   \[\begin{tikzcd}
	{R[\Omega^\infty\Sigma^n \F_p\{X\}]} & {C_R(\Sigma^n R\{X\})} \\
	R & {C_R(\Sigma^n R\{X\})}
	\arrow[from=1-1, to=1-2]
	\arrow["{F_{R}-1}", from=1-2, to=2-2]
	\arrow[from=1-1, to=2-1]
	\arrow[', from=2-1, to=2-2].
\end{tikzcd}\] 
\end{proposition}
\begin{proof}
The proof of~\cite[][Proposition 7.10]{bb} implies that we have a pullback diagram in 
$\cCAlg(\rm{Pro}(\Mod_k))$ of the form
\[\begin{tikzcd}
	{\tau_{<\infty}ck[\Omega^\infty\Sigma^n\F_p\{X\}]} & {\widehat{C}_k(c\Sigma^n k\{X\})} \\
	ck & {\widehat{C}_k(c\Sigma^n k\{X\})}
	\arrow[from=1-1, to=1-2]
	\arrow[from=1-1, to=2-1]
	\arrow[from=2-1, to=2-2]
	\arrow["{\widehat{F}-1}",from=1-2, to=2-2].
\end{tikzcd}\]
By Lemma~\ref{probc1} base change along the unit $k \to R$
yields a pullback square in $\cCAlg(\rm{Pro}(\Mod_R))$ of the form
\begin{equation}\label{pullback}
\begin{tikzcd}
	{\tau_{<\infty}ck[\Omega^\infty\Sigma^n\F_p\{X\}] \otimes_k R}
    & {\widehat{C}_R(c\Sigma^n R \{X\})} \\
	cR & {\widehat{C}_R(c\Sigma^n R\{X\})}
	\arrow[from=1-1, to=1-2]
	\arrow[from=1-1, to=2-1]
	\arrow[from=2-1, to=2-2]
	\arrow["{\widehat{F}_R-1}",from=1-2, to=2-2].
\end{tikzcd}
\end{equation}
Since $k$ and $R$ are connective, Lemma~\ref{probc2} implies that 
\[\tau_{<\infty} ck[\Omega^\infty\Sigma^n\F_p\{X\}] \otimes_k R \]
is pro-constant up to pro-truncation.
Thus, by~\cite[][Lemma 5.10]{bb} the coalgebraic materialization agrees with
the underlying materialization and we see that
\[ M^{\rm{cA}}(\tau_{<\infty} (ck[\Omega^\infty \Sigma^n \F_p\{X\}] \otimes_k R)
\simeq  \lim_n ( (\tau_{\leq n}k[\Omega^\infty \Sigma^n\F_p\{X\}]) \otimes_k R)
\simeq R[\Omega^\infty \Sigma^n \F_p\{X\}],\]
where the limit is computed underlying by~\cite[][Lemma 5.11]{bb} and we again
use that all terms are connective. Since $M^{\rm{cA}}$
is a right adjoint it preserves limits and it is easy to verify from the universal property
that $M^{\rm{cA}}(\widehat{C}_R(c\Sigma^n R\{X\})$ exists and is given by $ C_R(\Sigma^n R\{X\})$.
Thus, applying $M^{\rm{cA}}$ to the pullback~\eqref{pullback} we get our claim.
\end{proof}

\begin{remark}\label{Q0}
Let $R \in \CAlg(\Mod_k), M \in \Mod_R $ and $p:R\oplus M \to R$ be the split square
zero extension of $R$ with fiber $M$. Denote by $e: R \to R \oplus M$ the 0-section.
As observed in Remark~\ref{cohsq0}, the multiplication map
\[
(M^{\otimes p})_{h\Sigma_p} \to M
\]
is nullhomotopic. Hence, the power operation $Q_0:R\oplus M \to R\oplus M$ is nullhomotopic
when restricted to $M$ and hence factors as
\[ R\oplus M \rar{p} R \rar{Q_0} R \rar{e} R\oplus M.\]
This enables us to understand the Weil-restricted Artin\textendash Schreier map of Remark~\ref{frobres}.
\end{remark}

\begin{proposition}\label{split}
Let $R\in \rm{CAlg}(\Mod_k^{\rm{cn}}), M \in \Mod_R^{\rm{cn}}$, $X\in \rm{Set}_\pt$
and denote by $e:R \to R \oplus M$ the 0-section of the split square zero extension.
The (non-connective) Weil-restriction of the map
$F_{R\oplus M}: C_{R\oplus M}(e^\pt R\{X\})\to C_{R\oplus M}(e^\pt R\{X\})$ 
along $e$ yields a map 
    \[ e_!(F_{R\oplus M}-1): C_R(e_\pt e^\pt R\{X\})\rar{}
    C_R(e_\pt e^\pt R\{X\}).\]
which naturally splits in $\cCAlg(\Mod_R)$ as
    \[ (F_R-1) \otimes_R \id: C_R( R\{X\} ) \otimes_R C_R(M\{X\}) 
    \rar{} C_R(R\{X\}) \otimes_R C_R( M\{X\}).\]
\end{proposition}
\begin{proof}
First note that, since $e_\pt e^\pt R\{X\} \simeq R\{X\} \times M \{X\}$ and the cofree 
coalgebra preserves limits, we obtain
\[ C_R(e_\pt e^\pt R\{X\}) \simeq C_R( R\{X\} ) \otimes_R C_R(M\{X\}).\]
To show that $e_!(F_R -1)$ is given by the product $(F_R-1) \otimes \id $, it suffices to check 
this after applying $\Map_{\cCAlg(\Mod_R)}(A, \blank)$ for some arbitrary $A \in \cCAlg(\Mod_R)$.
Since the functor $\rm{Map}_{\Mod_R}(A, \blank)$ commutes with limits, we see that for each
$n\geq 0$ we have 
\[ \Map_{\Mod_R}(A, (R\oplus M)\{S^n\}) \simeq
\Omega^{\infty-n} ( R^A \oplus M^A)\]
where $R^A \oplus M^A$ is the split square zero extension of $R^A =A^\vee$ with fiber the $R$-linear
mapping spectrum $M^A = \rm{map}_R(A, M)$. Thus, for $X=\pt$  the claim is 
immediate from Remark~\ref{Q0}. Observe that, by naturality of the Artin\textendash Schreier map, the map
\[(R\oplus M)\{X\} \to \prod_X(R\oplus M)\] 
induces a commutative diagram
\[\begin{tikzcd}
	{\Map_{\Mod_R}(A, (R\oplus M)\{X\})}  & {\prod_X \Map_{\Mod_R}(A, R\oplus M) }  \\
	{\Map_{\Mod_R}(A, (R\oplus M)\{X\})} & {\prod_X \Map_{\Mod_R}(A, R\oplus M)}
	\arrow[from=1-1, to=2-1]
	\arrow[from=1-1, to=1-2]
	\arrow["{Q_0-\id}", from=1-2, to=2-2]
	\arrow[from=2-1, to=2-2],
\end{tikzcd}\]
which we may further factor through
\[\begin{tikzcd}
	{\Map_{\Mod_R}(A, (R\oplus M)\{X\})} & {\Map_{\Mod_R}(A,(\prod_Xk)\otimes_k (R\oplus M))} \\
	{\Map_{\Mod_R}(A, (R\oplus M)\{X\})} & {\Map_{\Mod_R}(A, (\prod_X k) \otimes_K (R\oplus M))}
	\arrow[from=1-1, to=1-2]
	\arrow[from=1-1, to=2-1]
	\arrow["{Q_0-\id}", from=1-2, to=2-2]
	\arrow[from=2-1, to=2-2].
\end{tikzcd}\]
The right hand vertical map splits as desired by Lemma~\ref{Q0} since 
$(\prod_X k)\otimes_k(R\oplus M)$ is naturally a ring. Moreover, since $k$ is a field, 
we can choose a retraction $\prod_X k \to k\{X\}$, which exhibits the left hand vertical map
as a retract of the right hand one. Hence, the left hand map splits as well, proving the claim.
\end{proof}

\begin{proposition}\label{emfet}
    Let $k$ be a field of characteristic $p$, $R\in \CAlg(\Mod_k^{\rm{cn}})$ 
    and let $V \in \Mod_{k}^{\heartsuit}$ be a discrete $k$-vector space. The $R$-homology coalgebra
    $R[\Omega^\infty \Sigma^n V] \in \rm{cCAlg}(\rm{Mod}^{\cn}_k)$ is formally \'etale.
\end{proposition}
\begin{proof}
Denote by $e:R\to \Sigma^n R$ the zero section of the split square zero extension for some 
$n\geq 0$. Upon choosing
a basis for $V$, i.e.~picking a set $X$ and an isomorphism $k\{X\}\rar{\sim}V$, applying the 
(non-connective) Weil restriction $e_!$ to the pullback of Proposition~\ref{bbpb} yields a pullback
diagram in $\cCAlg(\Mod_R)$
\[\begin{tikzcd}
	{e_{!}e^{\pt}R[\Omega^\infty\Sigma^n V]} & {C_R( e_\pt e^\pt R \otimes \Sigma^n V)} \\
	R & {C_R(e_\pt e^\pt R \otimes \Sigma^nV)} 
	\arrow[from=1-1, to=1-2]
	\arrow[from=1-1, to=2-1]
	\arrow["{{e_!(F_{R\oplus M}-1)}}", from=1-2, to=2-2]
	\arrow[from=2-1, to=2-2].
 \end{tikzcd}\]
By Corollary~\ref{split} the square zero terms splits off on the right hand vertical map and we obtain a pullback
\[\begin{tikzcd}
	{e_{!}e^{\pt}R[\Omega^\infty\Sigma^nV]} & {C_R( R\otimes \Sigma^nV)} \\
	R & {C_R( R \otimes \Sigma^nV)}
	\arrow[from=1-1, to=1-2]
	\arrow[from=1-1, to=2-1]
	\arrow["{F_R-1}", from=1-2, to=2-2]
	\arrow[from=2-1, to=2-2].
\end{tikzcd}\]
However, by Proposition~\ref{bbpb} this pullback is given by $R[\Omega^\infty \Sigma^n V]$. Since
$e_!e^\pt R[\Sigma^n V]= R[\Sigma^n V]$ is connective, it follows from our observations 
in Construction~\ref{radj} that in fact
\[ R[\Sigma^n V] \simeq e_\pt e^\pt R[\Sigma^n V] \simeq e_! e^\pt R[\Sigma^n V].\]
In particular, the natural maps
\[ R[\Omega^\infty\Sigma^n V] \rar{\varepsilon} e_\pt e^\pt R[\Omega^\infty\Sigma^n V]
= \Omega^\infty_{R[\Omega^\infty\Sigma^n V]}M \rar{\pi} R[\Omega^\infty\Sigma^n V] \]
are equivalences and so $R[\Omega^\infty \Sigma^n V]$ is formally \'etale.
\end{proof}

We now expand from Eilenberg\textendash MacLane spaces to arbitrary connected spaces. 
Let us recall the following notions in unstable homotopy theory.

\begin{definition}
 A space $X$ is called \textit{$p$-complete} if it is local with respect to the functor
$\F_p[\blank]:\cl{S}\to \Mod_{\F_p}$. Denote by $\cl{S}_p\subseteq \cl{S}$ the full 
subcategory category of $p$-complete spaces. Moreover, call a space \textit{nilpotent}
if is connected and $\pi_1$ is a nilpotent group which acts nilpotently on the higher homotopy
groups. Write $\cl{S}^{\rm{nilp}}$ for the category of complete nilpotent spaces and
$\cl{S}_p^{\rm{nilp}}= \cl{S}_p \cap \cl{S}^\rm{nilp}$ for the category of $p$-complete
nilpotent spaces.
\end{definition}

\begin{lemma}\label{genem}
    Let $X$ be a connected space and $R$ a connective $\F_p$-algebra.
    Then $R[\Omega^\infty \F_p\{X\}] \in \cCAlg(\Mod_R^{\cn})$ is given by a limit
    $\lim R[X_i]$ where each $X_i$ is a finite product of Eilenberg\textendash MacLane spaces.
\end{lemma}
\begin{proof}
Since any $\F_p$-module is free, we can write 
\[
Y:=\Omega^\infty \F_p\{X\} \simeq \prod_{i\geq 0} \Omega^\infty \Sigma^i V_i
\]
where $V_i = \pi_i\F_p\{X\} \in \Mod_{\F_p}^{\heartsuit}$. Setting
\[
Y_n = \tau_{\leq n}Y = \prod_{i\leq n} \Omega^\infty \Sigma^i V_i 
\]
we have $Y \simeq \flim Y_n$. Since $R[\blank]$ commutes with Postnikov towers we get an equivalence
\[
cR[Y] \simeq \flim R[Y_n] \in \cCAlg(\rm{Pro}(\Mod_R)).
\]
moreover, since $\tau_{\leq k } R[Y] \simeq \tau_\leq R[Y_k]$ this diagram is pro-constant up 
to pro-truncation and hence we get 
\[
R[Y] \simeq M^{cA}(\flim R[Y_n]) = \flim R[Y_n] \in \cCAlg(\Mod_R)
\]
as claimed.
\end{proof}

\begin{lemma}\label{emhom}
    Let $X$ be a space and $R$ an $\bb{E}_\infty$-ring. There exists a cofiltered diagram
    of nilpotent spaces $\{X_{\alpha}\}$, such that the natural map
    \[ R[X] \to \flim_{\alpha} R[X_{\alpha}]\]
    is an equivalence in $\cCAlg(\Mod_R)$.
\end{lemma}
\begin{proof}
   Since finite limits of nilpotent spaces are nilpotent,~\cite[][Remark 3.1.7]{dag8}
   implies that the inclusion $\iota: \cl{S}^{\rm{nilp}} \to \cl{S}$ induces an adjunction
   \[\begin{tikzcd}
	{(\widehat{\blank}): \rm{Pro}(\cl{S})} & {\rm{Pro}(\cl{S}^{\rm{nilp}}):{\iota}}
	\arrow[""{name=0, anchor=center, inner sep=0}, shift left=2, from=1-2, to=1-1]
	\arrow[""{name=1, anchor=center, inner sep=0}, shift left=2, from=1-1, to=1-2]
	\arrow["\dashv"{anchor=center, rotate=-90}, draw=none, from=1, to=0].
\end{tikzcd}\]
  The left adjoint is given by taking a space $X$ to $ \widehat{X}=\flim_{X_{\alpha} \to X} X_{\alpha}$
  where the cofiltered limit runs over all maps $X_{\alpha} \to X$ where $X_{\alpha}$ is nilpotent. 
 Moreover, the homology $R[\blank]\colon \cl{S}\to \Mod_R$ induces a functor 
\[ R[\blank]\colon \rm{Pro}(\cl{S})\to \rm{Pro}(\Mod_R) \quad \flim_i X_i \mapsto \flim_i R[X_i].\]
  Since the infinite loop space of any spectrum is nilpotent, applying $R[\blank]$ to the reflection
  $X\to \flim_\alpha X_{\alpha}$ into $\rm{Pro}(\cl{S}^{\rm{nilp}})$ induces an equivalence
  \[ cR[X] \to \flim_{\alpha} R[X_\alpha]\]
  in $\rm{Pro}(\Mod_R)$. Since the forgetful functor 
  $\cCAlg(\rm{Pro}(\Mod_R)) \to \rm{Pro}(\Mod_R)$ commutes with cofiltered limits, the right 
  hand limit  can also be computed in $\cCAlg(\rm{Pro}(\Mod_R))$. Since the coalgebraic 
  materialization of constant objects exists and commutes with limits, we obtain that
  \[ R[X] \simeq M^{\rm{cA}}(cR[X]) \simeq M^{\rm{cA}}(\flim_\alpha R[X_{\alpha}])
  \simeq \flim_{\alpha} R[X_\alpha ],\]
 as claimed. 
\end{proof}

\begin{theorem}\label{sepet}
    Let $X$ be a connected space and $R$ a connective $\F_p$-algebra.
    Then the $R$-homology $R[X]\in \cCAlg(\Mod_R^{\rm{cn}})$ is formally \'etale. 
\end{theorem}
\begin{proof}
    Since $\Omega^\infty N$ is $p$-complete for any $N \in \Mod_{\F_p}$, the natural map
    \[ R[X] \to R[X^\wedge_p]\]
    is an equivalence, so we may assume that $X$ is $p$-complete. Let $M \in \Mod_R^{\rm{cn}}$ and
    denote by $e:R \to R\oplus M$ be the 0-section.
    By Lemma~\ref{emhom} we again have that
    \[ e^\pt R[X] = \flim e^\pt R[X_\alpha] \]
    where each $X_\alpha$ is nilpotent. Thus, since Weil-restriction again commutes with limits 
    we can assume that $X$ is nilpotent. Write 
    \[
    X^n := (\Omega^\infty \F \{ \blank\})^{\circ n} \circ X,
    \]
    which defines a co-augmented simplicial diagram $X^\bullet \to X$, which admits an additional
    degeneracy by choosing base-points. Then, since $X$ is nilpotent and $p$-complete we have
    by~\cite[][Proposition VI.6.2]{bk} that $X\simeq \lim X^n$. Moreover, the additional degeneracy
    tells us that this is a universal limit diagram, and hence
    \[ e^\pt R[X] \simeq \flim e^\pt R[X^\bullet] \in \cCAlg(\Mod_R).\]
    So we can assume that $X$ is of the form $\Omega^\infty \F_p\{Y\}$. Now, by Lemma~\ref{genem}
    we see that $e^\pt R[X] \simeq \lim_i e^\pt R[X_i]$ where each $X_i$ is a finite 
    product of Eilenberg\textendash MacLane spaces. Hence, the claim follows from Proposition~\ref{emfet}
    since $R[\blank]$ takes products of spaces to products of coalgebras.
\end{proof}

Thus, we have answered our initial question about recovering spherical chains from characteristic
$p$-chains, which we may summarize as follows.

\begin{corollary}
  Let $X\in \cl{S}$ be either connected or compact. The space of lifts of
  $k[X] \in \cCAlg(\Mod_k)$ to a coalgebra in $p$-complete, connective $\bb{W}(k)$-modules 
  is contractible and the unique poiint is given by $\bb{W}(k)[X]^\wedge$. 
  Moreover, for any $A \in \rm{cCAlg}(\Mod_{\bb{W}(k)}^{\wedge,\rm{cn}})$
  the base change along the map $\bb{W}(k)\to k$ induces an equivalence
  \[\rm{Map}_{\rm{cCAlg}(\rm{Mod}_{\bb{W}(k)}^{\wedge})}(A, \bb{W}(k)[X]^\wedge_p)
    \rar{\sim} \rm{Map}_{\rm{cCAlg}(\rm{Mod}_{k})}(A\otimes k, k[X]). \]
\end{corollary}
\begin{proof}
    Combine Theorem~\ref{sepet} with Theorem~\ref{wittsp}.
\end{proof}

Finally, we observe that this also yields a spherical and coalgebraic version
of Mandells embedding in~\cite{mandell}.

\begin{corollary}
 Let $k$ be a perfect, separably closed field of characteristic $p$ with spherical Witt vectors
 $\W(k)$. The $p$-complete $\bb{W}(k)$-homology functor
 \[ \cl{S}_p^{\rm{nilp}} \to \rm{cCAlg}(\Mod^{\wedge}_{\bb{W}(k)})
 \quad X \mapsto \bb{W}(k)[X]^\wedge_p\]
 is fully faithful. In particular, for any nilpotent space $X$ we have a natural equivalence
 \[ X^\wedge_p\simeq \rm{Map}_{\rm{cCAlg}(\Mod_{\bb{W}(k)}^\wedge})(\bb{W}(k), \bb{W}(k)[X]^\wedge_p).\]
\end{corollary}
\begin{proof}
By~\cite[][Theorem 1.2.]{bb} the functor
\[ k[\blank]: \cl{S}_p^{\rm{nilp}} \to \cCAlg(\Mod_{k})\]
is fully faithful. By Theorem~\ref{sepet} it factors through $\cCAlg(\Mod_k^{\rm{cn}})^{\fet}$ and
hence by Theorem~\ref{wittsp} we can compose with $\cl{W}$ and get the claim.
\end{proof}
\printbibliography

@misc{htt,
      title={Higher Topos Theory},
      author={Jacob Lurie},
      year={2008},
      eprint={math/0608040},
      archivePrefix={arXiv},
      primaryClass={math.CT}
}

@misc{ha,
      author={Jacob Lurie},
      title={Higher Algebra},
      year={2017}
}

@misc{sag,
      title={Spectral Algebraic Geometry},
      author={Jacob Lurie},
      year={2021}
}

@misc{dag8,
      title={Derived Algebraic Geometry XIII: Rational and $p$-adic Homtopy Theory},
      author={Jacob Lurie},
      year={2011}
      }

@misc{dag14,
      title={Derived Algebraic Geometry XIV: Representability Theorems},
      author={Lurie, Jacob},
      year={2012}
      }

@misc{ellI,
      title={Elliptic Cohomology I: Spectral Abelian Varieties},
      author={Jacob Lurie},
      year={2018}
}

@misc{ellII,
title={Elliptic Cohomology II: Orientations},
author={Jacob Lurie},
year={2018}
}

@book{sweedler1969hopf,
  title={Hopf Algebras},
  author={Sweedler, Moss E.},
  isbn={9780805392548},
  lccn={lc78094355},
  series={Mathematics lecture note series},
  url={https://books.google.de/books?id=8FnvAAAAMAAJ},
  year={1969},
  publisher={W. A. Benjamin}
}

@article{mandell,
author = {Mandell, Michael},
year = {2001},
month = {01},
pages = {43-94},
title = {$\bb{E}_\infty$ algebras and p-adic homotopy theory},
volume = {40},
journal = {Topology},
doi = {10.1016/S0040-9383(99)00053-1}
}

@misc{tch,
  doi = {10.48550/ARXIV.1707.01799},
  url = {https://arxiv.org/abs/1707.01799},
  author = {Nikolaus, Thomas and Scholze, Peter},
  keywords = {Algebraic Topology (math.AT), K-Theory and Homology (math.KT), FOS: Mathematics, FOS: Mathematics, 19D55, 16E40, 13D03, 55P42, 55P43, 55P91, 55P92},
  title = {On topological cyclic homology},
  publisher = {arXiv},
  year = {2017},
  copyright = {arXiv.org perpetual, non-exclusive license}
}

@misc{yuan,
  doi = {10.48550/ARXIV.1910.00999},
  url = {https://arxiv.org/abs/1910.00999},
  author = {Yuan, Allen},
  keywords = {Algebraic Topology (math.AT), K-Theory and Homology (math.KT), FOS: Mathematics, FOS: Mathematics},
  title = {Integral Models for Spaces via the Higher Frobenius},
  publisher = {arXiv},
  year = {2019},
  copyright = {arXiv.org perpetual, non-exclusive license}
}

@misc{bb,
      title={$\mathbb{E}_{\infty}$-coalgebras and $p$-adic homotopy theory}, 
      author={Tom Bachmann and Robert Burklund},
      year={2024},
      eprint={2402.15850},
      archivePrefix={arXiv},
      primaryClass={math.AT}
}

@book {bk,
    AUTHOR = {Bousfield, A. K. and Kan, D. M.},
     TITLE = {Homotopy limits, completions and localizations},
    SERIES = {Lecture Notes in Mathematics},
    VOLUME = {Vol. 304},
 PUBLISHER = {Springer-Verlag, Berlin-New York},
      YEAR = {1972},
     PAGES = {v+348},
   MRCLASS = {55J05},
  MRNUMBER = {365573},
MRREVIEWER = {Harold\ Hastings},
}
\end{document}